\title{Coupled K\"ahler-Ricci solitons on toric Fano manifolds}
\author{Jakob Hultgren}
\date{}
\newtheorem{theorem}{Theorem}
\newtheorem{lemma}{Lemma}
\newtheorem{corollary}{Corollary}
\theoremstyle{definition}
\newtheorem{remark}{Remark}\theoremstyle{remark}
\theoremstyle{definition}
\newtheorem{conjecture}{Conjecture}
\newcommand{\Z}{\mathbb{Z}}
\newcommand{\R}{\mathbb{R}}
\newcommand{\C}{\mathbb{C}}
\newcommand{\M}{\mathcal{M}}
\renewcommand\Im{\operatorname{Im}}
\newcommand\Ric{\operatorname{Ric}}
\newcommand\Psh{\operatorname{Psh}}
\newcommand\Vol{\operatorname{Vol}}
\renewcommand\max{\operatorname{max}}
\renewcommand\div{\operatorname{div}}
\newcommand\Aut{\operatorname{Aut}}
\begin{document}
\maketitle

\begin{abstract}
We prove a necessary and sufficient condition in terms of the barycenters of a collection of polytopes for existence of coupled K\"ahler-Einstein metrics on toric Fano manifolds. This confirms the toric case of a coupled version of the Yau-Tian-Donaldson conjecture. We also obtain a necessary and sufficient condition for existence of torus-invariant solutions to a system of soliton type equations on toric Fano manifolds. Some of these solutions provide natural candidates for the large time limits of a certain geometric flow generalizing the K\"ahler-Ricci flow. 
\end{abstract}

\section{Introduction}
Given a compact K\"ahler manifold $(X,\omega)$, an important question in complex geometry is the problem of finding a metric of constant scalar curvature in the K\"ahler class $[\omega]$. It has been known for a long time that there are deep obstructions to existence of these metrics. In the case when $[\omega] = \pm c_1(X)$, constant scalar curvature metrics coincide with K\"ahler-Einstein metrics, i.e. metrics that are proportional to their Ricci tensor. It was recently showed \cite{ChenDonaldsonSun} that existence of such metrics is equivalent to a certain algebraic stability condition: K-polystability (see also \cite{Tian15}). A similar stabilitiy condition for general K\"ahler classes is conjectured to be equivalent to existence of constant scalar curvature metrics. However, except for in some special classes of manifolds (see \cite{Donaldson09}) this is open. It should also be pointed out that even in the light of \cite{ChenDonaldsonSun}, determining if a given manifold admits a K\"ahler-Einstein metric is not a straight forward task. The condition of K-polystability is not readily checkable. On the other hand, a large class of manifolds where K-polystability reduces to a simple criterion is given by toric Fano manifolds. Here, as was originally proved in \cite{WangZhu,ZhouZhu}, K-polystability and existence of K\"ahler-Einstein metrics is equivalent to the condition that the barycenter of the polytope associated to the anti-canonical polarization is the origin. In addition, \cite{WangZhu} proves that any toric Fano manifold admits a K\"ahler-Ricci soliton,
in other words a metric $\omega$ such that
\begin{equation}
    \Ric \omega = L_V(\omega)+\omega 
    \label{eq:ClassicalSoliton} 
\end{equation}
for a holomorphic vector field $V$. Here $L_V$ denotes Lie derivative along $V$. These appear as natural long time solutions to the K\"ahler-Ricci flow and have attracted great interest over the years. (see for example \cite{Hamilton93}, \cite{Hamilton95}, \cite{Cao} and \cite{Tian}). 

In a recent paper Witt Nystr\"om together with the present author introduced the concept of coupled K\"ahler-Einstein metrics \cite{HultgrenWittNystrom}. These are $k$-tuples of K\"ahler metrics $(\omega_1,\ldots,\omega_k)$ on a compact K\"ahler manifold $X$ satisfying
\begin{equation}
    \Ric \omega_1 = \ldots = \Ric \omega_k = \pm\sum_i \omega_i.
    \label{eq:cKE} 
\end{equation}
These generalizes K\"ahler-Einstein metrics in the sense that that for $k=1$ this equation reduces to the classical equation
$$ \Ric \omega_1 = \pm \omega_1 $$
defining K\"ahler-Einstein metrics. Moreover, \eqref{eq:cKE} implies a cohmological condition on $\omega_1,\ldots,\omega_k$, namely
\begin{equation}
    \label{eq:CohomologicalCondition}
    \sum_i [\omega_i] = \pm c_1(X).
\end{equation}
We see that, similarly as for K\"ahler-Einstein metrics, the theory splits into two cases: $c_1(X)<0$ and $c_1(X)>0$. Now, as in \cite{HultgrenWittNystrom} we will say that a $k$-tuple of K\"ahler classes $(\alpha_1,\ldots,\alpha_k)$ such that $\sum_i \alpha_i=\pm c_1(X)$ is a \emph{decomposition of $\pm c_1(X)$} and given a decomposition of $c_1(X)$ we will say that it admits a coupled K\"ahler-Einstein metric if there is a coupled K\"ahler-Einstein metric $(\omega_1,\ldots,\omega_k)$ such that $[\omega_i]=\alpha_i$ for all $i$. In \cite{HultgrenWittNystrom} it was shown that fixing a decomposition of $c_1(X)$ imposes the right boundary conditions on \eqref{eq:cKE} in the sense that:
\begin{itemize}
    \item If $c_1(X)<0$, then any decomposition of $-c_1(X)$ admits a unique coupled K\"ahler-Einstein metric. 
    \item If $c_1(X)>0$, then any coupled K\"ahler-Einstein metric admitted by a given decomposition of $c_1(X)$ is unique up to the flow of holomorphic vector fields.
\end{itemize}
Moreover, it was shown that if $c_1(X)>0$ and $(\omega_1,\ldots,\omega_k)$ is a coupled K\"ahler-Einstein metric, then the associated $k$-tuple of K\"ahler classes $([\omega_1],\ldots,[\omega_k])$ satisfies a certain algebraic stability condition which, by analogy, was called K-polystability. It was also conjectured that the converse of this holds, providing a ''coupled'' Yau-Tian-Donaldson conjecture:
\begin{conjecture}\cite{HultgrenWittNystrom}
Assume $c_1(X)>0$. Then a decomposition of $c_1(X)$ admits a coupled K\"ahler-Einstein metric if and only if it is K-polystable. 
\end{conjecture}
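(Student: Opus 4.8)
The final statement is a conjecture, so what I would actually prove is its toric case: on a toric Fano $X$ a decomposition $(\alpha_1,\dots,\alpha_k)$ of $c_1(X)$ admits a coupled K\"ahler--Einstein metric if and only if $\sum_i\beta_i=0$, where $\beta_i$ is the barycenter of the polytope $P_i\subset M_\R$ attached to $\alpha_i$, the translates being normalized so that the Minkowski sum $P_1+\dots+P_k$ is the reflexive polytope $P$ of $-K_X$ (so $0\in\operatorname{int}P$). One checks that $\sum_i\beta_i$ is independent of this choice, and, separately, that this barycenter condition is exactly the toric form of K-polystability of the decomposition --- the usual computation of Donaldson--Futaki invariants for toric test configurations, carried out for the coupled functional --- so that the two equivalences together give the toric case of the coupled Yau--Tian--Donaldson conjecture. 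First I would reduce to the torus: since $(\C^*)^n\subset\Aut(X)$ and, by the uniqueness statements quoted above, coupled K\"ahler--Einstein metrics in a fixed decomposition form a single $\Aut^\circ(X)$-orbit, a standard fixed-point/averaging argument yields, whenever such a metric exists, a $\T^n$-invariant one. In logarithmic coordinates on $(\C^*)^n$ these correspond to $k$-tuples of convex functions $\psi_i$ on $\R^n$ with $\psi_i-h_{P_i}$ bounded ($h_{P_i}$ the support function of $P_i$) solving the real Monge--Amp\`ere system $\MA(\psi_i)=c\,e^{-\sum_j\psi_j}$, equivalently the critical points of the coupled toric Ding functional
\begin{equation*}
  \mathcal D(\psi_1,\dots,\psi_k)=\sum_{i=1}^{k}\frac{1}{\Vol P_i}\int_{P_i}\psi_i^{*}\,dp-\log\int_{\R^n}e^{-\sum_j\psi_j}\,dx .
\end{equation*}

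This functional is invariant under adding constants to the $\psi_i$ and changes by $-\langle a,\sum_i\beta_i\rangle$ under the diagonal translation $\psi_i(\cdot)\mapsto\psi_i(\cdot+a)$ (which is the action of $\T^n$), so mere existence of a minimizer already forces $\sum_i\beta_i=0$; the content is the converse, which I would get by the direct method. The two ingredients are convexity and properness. For convexity, $\mathcal D$ is convex along the paths affine in the dual variables $u_i=\psi_i^{*}$ (bounded convex functions on $\overline{P_i}$): the first term is then affine, and for the $\log$-term one uses that $(t,x)\mapsto\psi_i^t(x)$ is jointly convex (a partial Legendre transform of the jointly convex $(t,p)\mapsto u_i^t(p)$) together with Pr\'ekopa's theorem on log-concavity of marginals of log-concave functions. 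For the slope along a geodesic ray with direction $v=(v_1,\dots,v_k)$, $v_i$ convex on $\overline{P_i}$, I would prove
\begin{equation*}
  \lim_{t\to\infty}\frac1t\mathcal D(\psi^t)=\sum_{i=1}^{k}\frac{1}{\Vol P_i}\int_{P_i}v_i\,dp-\inf\Bigl\{\textstyle\sum_j v_j(p_j):p_j\in\overline{P_j},\ \sum_j p_j=0\Bigr\},
\end{equation*}
the second term arising because under Legendre duality $\sum_j\psi_j^t$ corresponds to the infimal convolution of the $u_i^0+tv_i$, a convex function on $P=\sum_jP_j$ asymptotic to $t$ times the infimal convolution of the $v_j$, whose value at the interior point $0$ is exactly that infimum (a one-polytope Laplace estimate, using $0\in\operatorname{int}P$).

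The barycenter dichotomy is then immediate. By Jensen, $\frac1{\Vol P_i}\int_{P_i}v_i\ge v_i(\beta_i)$; if $\sum_i\beta_i=0$ then $(\beta_1,\dots,\beta_k)$ is feasible for the infimum, so the slope is $\ge 0$, and equality forces each $v_i$ to be affine with a common linear part, i.e.\ the ray lies along the symmetry group (diagonal torus plus constants). If $\sum_i\beta_i\neq0$, the linear direction $v_i=\langle-\sum_j\beta_j,\cdot\rangle$ gives slope $-|\sum_j\beta_j|^2<0$, so $\mathcal D$ is unbounded below, no coupled K\"ahler--Einstein metric exists, and the decomposition is K-unstable. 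It remains, when $\sum_i\beta_i=0$, to upgrade ``positive slope off the symmetry directions'' to genuine properness of $\mathcal D$ modulo the symmetry group; here I would follow the strategy of \cite{WangZhu,ZhouZhu} for the classical $k=1$ case, normalizing the potentials (e.g.\ so that $\sum_j\psi_j$ attains its infimum at the origin) and extracting a uniform estimate $\mathcal D\ge\varepsilon\|(\psi_i)\|-C$; the direct method then produces a minimizer, hence a $\T^n$-invariant coupled K\"ahler--Einstein metric in the decomposition, completing the proof.

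The main obstacle is this last step: converting the soft necessary direction into the sufficient one requires a quantitative properness estimate for $\mathcal D$ modulo automorphisms, and the coupling term $-\log\int e^{-\sum_j\psi_j}$ ties the $k$ potentials together, so one must control non-compactness in the product configuration space $\prod_i\{\psi_i\}$ and rule out sequences escaping to infinity along almost-flat directions not generated by the single diagonal torus and the constants; the convexity of $\mathcal D$ and the clean slope formula are precisely the tools that reduce this, as in the classical toric picture, to convex geometry of the polytopes $P_i$. The same scheme, with the Lebesgue measures replaced by the weighted measures $e^{\langle\xi_i,p\rangle}dp$, should handle the soliton system, the criterion then being $\sum_i\beta_i^{\xi_i}=0$ for the weighted barycenters, with a further convexity argument (as in \cite{WangZhu}) producing a distinguished choice of $(\xi_i)$ for which solutions always exist.
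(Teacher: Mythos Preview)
Your approach is genuinely different from the paper's. The paper proves the toric case by the continuity method: it sets up the path \eqref{eq:ContinuitySetup}, establishes openness via the implicit function theorem (Lemma~\ref{lemma:HInjective}), and obtains closedness from a priori estimates --- first $C^0$ by a Wang--Zhu type argument on the associated convex functions (Lemma~\ref{lemma:ControlOfW} and the Morrey/Poincar\'e bound on the Legendre transforms), then $C^2$ via a Yau-type Laplacian estimate (Lemma~\ref{lemma:LaplacianEstimate}), then $C^{2,\alpha}$ by \cite{Wang}. The equivalence with K-polystability is closed by the cycle $(3)\Rightarrow(1)\Rightarrow(2)\Rightarrow(3)$, the middle implication being imported from \cite{HultgrenWittNystrom} and the last from an explicit Donaldson--Futaki computation for toric test configurations. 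You instead go variationally, via convexity of the coupled toric Ding functional along geodesics in the symmetric-space variables and a slope formula at infinity, in the spirit of \cite{BermanBerndtsson}. This is attractive: it bypasses the higher-order PDE estimates, and, as the paper itself remarks, would in principle extend to log Fano varieties and to $k$-tuples of polytopes not coming from a smooth toric decomposition.

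That said, the step you yourself flag as ``the main obstacle'' is a genuine gap, not just a detail. Your slope computation and the Jensen argument give nonnegative asymptotic slope along every geodesic ray when $\sum_i\beta_i=0$, but this is strictly weaker than properness of $\mathcal D$ modulo the symmetry group: one needs a uniform lower bound $\mathcal D\ge\varepsilon\|(\psi_i)\|-C$ after normalization, not merely a ray-by-ray statement. In the classical $k=1$ case this gap is closed by a compactness argument specific to a single polytope; in the coupled setting the potentials are tied only through the term $-\log\int e^{-\sum_j\psi_j}$, and a minimizing sequence could in principle drift in the individual $\psi_i$ while $\sum_j\psi_j$ stays controlled. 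The paper does not attempt this coercivity estimate and explicitly leaves it open; its continuity-method proof sidesteps the issue because the $C^0$-estimate (Theorem~\ref{thm:C0Estimates}) controls each $f_i$ separately along the path, using the normalization \eqref{eq:RealNormalization} and the bound on $|x_w|$ coming from \eqref{eq:SolitonCondition}. If you pursue your route, this quantitative coercivity for the coupled Ding functional is the one nontrivial thing you must actually prove.
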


Our main theorem confirms this conjecture in the toric case and provides a simple condition for K-polystability in terms of the barycenters of a collection of polytopes associated to $(\alpha_1,\ldots,\alpha_k)$. More precisely, consider the anti-canonical line bundle $-K_X$ over a toric Fano manifold $X$. Fixing the action of $(\C^*)^n$ on $X$, this defines a polytope $P_{-K_X}$ in the vector space $M\otimes \R$ where $M$ is the character lattice of $(\C^*)^n$. For a general K\"ahler class that arise as the curvature of a toric line bundle, this correspondence is well defined up to translation of the polytope (or equivalently, up to choice of action on the toric line bundle). Moreover, the correspondence trivially extends to all K\"ahler classes that can be written as linear combinations with positive real coefficients of K\"ahler classes of this type. By general facts (see Lemma~\ref{lemma:RClasses} and the discussion following it) this holds for any K\"ahler class on a toric Fano manifold. 
%
This means that a decomposition of $c_1(X)$ determines (up to translations) a set of polytopes $P_1,\ldots,P_k$ in $\R^n$. Moreover, the condition $\sum_i \alpha_i = c_1(X)$ means the polytopes can be chosen so that the Minkowski sum
\begin{equation}
    \sum_i P_i = P_{-K_X}.
    \label{eq:PolytopeNormalization}
\end{equation}
Enforcing this, we note that the polytopes associated to a decomposition of $c_1(X)$ are well defined up to translations 
$$ (P_1,\ldots,P_k) \mapsto (P_1+c_1,\ldots,P_k+c_k)$$ 
where $c_1,\ldots,c_k\in \R^n$ satisfies $\sum_i c_i=0$.

Now, given a polytope $P$ in $\R^n$ we will let $b(P)$ be the (normalized) barycenter of $P$
$$ b(P) = \frac{1}{\Vol(P)}\int_P p dp $$
where $dp$ is the uniform measure on $P$ and $\Vol(P)=\int_Pdp$. Note that $b(P+c) =b(P)+c$, hence, assuming \eqref{eq:PolytopeNormalization}, the quantity $\sum_i b(P_i)$ is independent of the choices of translation of $P_1,\ldots,P_k$.  
Our main theorem is:
\begin{theorem}
    \label{thm:cYTD}
    Let $X$ be a toric Fano manifold. Assume  $(\alpha_i)$ is a decomposition of $c_1(X)$ and $P_1,\ldots,P_k$ are the associated polytopes. Then the following is equivalent:
    \begin{itemize}
    \item $(\alpha_i)$ admits a coupled K\"ahler-Einstein tuple
    \item $(\alpha_i)$ is K-polystable in the sense of \cite{HultgrenWittNystrom}
    \item $ \sum_i b(P_i) = 0 $
    \end{itemize}
\end{theorem}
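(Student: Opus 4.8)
\emph{Proof proposal.} The plan is to reduce everything to convex analysis on the polytopes $P_1,\dots,P_k$ and then combine a Wang--Zhu type variational argument \cite{WangZhu} with Jensen's inequality. \emph{Step 1 (toric reduction).} By the $(\C^*)^n$-averaging and uniqueness results of \cite{HultgrenWittNystrom} it suffices to find torus-invariant solutions. Writing $\omega_i = dd^c\phi_i$ and identifying each invariant potential $\phi_i$ with a smooth strictly convex function on $\R^n$ whose gradient image is the interior of $P_i$ --- using Lemma~\ref{lemma:RClasses} to make sense of the possibly irrational $P_i$, normalized by $\sum_i P_i = P_{-K_X}$ --- equation \eqref{eq:cKE} turns into the real Monge--Amp\`ere system
\begin{equation}
    \MA(\phi_i) = e^{-\sum_{j}\phi_j},\qquad i = 1,\dots,k,
    \label{eq:ToricSystem}
\end{equation}
with the additive constants fixed by $\int_{\R^n}\MA(\phi_i) = n!\,\Vol(P_i)$. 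Thus $(\alpha_i)$ admits a coupled K\"ahler--Einstein tuple iff \eqref{eq:ToricSystem} is solvable.

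\emph{Step 2 (variational setup and the obstruction).} The system \eqref{eq:ToricSystem} is the Euler--Lagrange system of a \emph{coupled Ding functional} $\mathcal{D}(\phi_1,\dots,\phi_k)=\sum_i(-E_i(\phi_i))-\log\int_{\R^n}e^{-\sum_j\phi_j}$, where $E_i$ is the Monge--Amp\`ere energy of $\alpha_i$. This functional is geodesically convex and, in the symplectic potentials $u_i$ (Legendre transforms of the $\phi_i$, convex functions on the fixed $P_i$, with geodesics the affine segments), descends to the quotient by the diagonal $(\C^*)^n$-action, which translates the arguments of the $\phi_i$; solvability of \eqref{eq:ToricSystem} is then equivalent to properness of $\mathcal{D}$ modulo this action. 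The obstruction lies in the affine directions: along a one-parameter subgroup in a direction $v\in\R^n$ the asymptotic slope of $\mathcal{D}$ is a positive multiple of $\langle v,\sum_i b(P_i)\rangle$. Hence if $\sum_i b(P_i)\neq 0$, choosing $v$ with $\langle v,\sum_i b(P_i)\rangle<0$ makes $\mathcal{D}$ unbounded below, so it has no critical point and there is no coupled K\"ahler--Einstein tuple; and this subgroup is the toric avatar of a product test configuration of negative coupled Donaldson--Futaki invariant, so $(\alpha_i)$ is not K-polystable. This proves ``coupled K\"ahler--Einstein tuple $\Rightarrow\sum_i b(P_i)=0$'' and ``K-polystable $\Rightarrow\sum_i b(P_i)=0$'', while ``coupled K\"ahler--Einstein tuple $\Rightarrow$ K-polystable'' is the general result of \cite{HultgrenWittNystrom}. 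It remains to show $\sum_i b(P_i)=0$ implies existence.

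\emph{Step 3 (existence from the barycenter condition).} Assume $\sum_i b(P_i)=0$. The asymptotic slope of $\mathcal{D}$ along the ray determined by a convex function $f$ on $P_{-K_X}$ is, after a toric computation using $\sum_i P_i = P_{-K_X}$, the linear functional $\mathcal{L}(f)=\sum_i\frac{1}{\Vol(P_i)}\int_{P_i}f\,dp-k\,f(0)$; two applications of Jensen's inequality --- once on each $P_i$, once on the barycenters $b(P_i)$ which average to $0$ --- give $\mathcal{L}(f)\ge 0$, with equality only for affine $f$. One then upgrades this strict slope positivity to properness of $\mathcal{D}$ modulo the torus by a compactness argument: a sequence of normalized potentials with $\mathcal{D}$ bounded but norms diverging would, after rescaling and Blaschke selection, yield a non-affine convex $f$ with $\mathcal{L}(f)\le 0$, a contradiction. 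Properness produces a minimizer of $\mathcal{D}$, and torus-equivariant regularity theory for the real Monge--Amp\`ere system --- adapting the a priori estimates of \cite{WangZhu} --- shows the minimizer is smooth and solves \eqref{eq:ToricSystem}, closing the cycle of equivalences. The main obstacle is precisely this last step: the coupled properness estimate with its compactness argument, and above all the interior and boundary regularity of the minimizer, where the $k$ equations are tied together through the single factor $e^{-\sum_j\phi_j}$; a secondary difficulty is running the argument for general, possibly irrational, K\"ahler classes, where ``test configuration'' must be read through its convex-analytic counterpart.
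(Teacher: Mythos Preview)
Your overall strategy---reduce to a real Monge--Amp\`ere system, set up a coupled Ding functional, and prove existence by properness---is a genuinely different route from the paper's. The paper does \emph{not} use the variational method: it runs the continuity path \eqref{eq:ContinuitySetup}, proves openness via the implicit function theorem (Lemma~\ref{lemma:HInjective}), and closes the path using Wang--Zhu type $C^0$-estimates (Theorem~\ref{thm:C0Estimates}) together with Yau--type Laplacian and $C^{2,\alpha}$ estimates (Lemma~\ref{lemma:LaplacianEstimate}). The cycle of implications is: $\sum b(P_i)=0 \Rightarrow$ existence (via Theorem~\ref{thm:MainTheorem} at $V_i=0$) $\Rightarrow$ K-polystable (by \cite{HultgrenWittNystrom}) $\Rightarrow \sum b(P_i)=0$ (by computing the Donaldson--Futaki invariant of a toric test configuration, Section~\ref{sec:TestConfigurations}). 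In particular, the obstruction direction in the paper is purely algebraic and does not pass through the Ding functional.

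There are two genuine gaps in your Step~3. First, the slope formula is wrong as written. Geodesic rays in the symplectic picture are parametrized by \emph{tuples} $(f_1,\dots,f_k)$ with $f_i$ convex on $P_i$, not by a single $f$ on $P_{-K_X}$; there is no reason for $P_i\subset P_{-K_X}$, so $\int_{P_i} f\,dp$ need not even make sense. The term $-\log\int e^{-\sum_j\phi_j}$ has Legendre dual the infimal convolution $\Box_j u_j$ on $P_{-K_X}$, and its asymptotic slope along the ray $u_i+tf_i$ is $-(\Box_j f_j)(0)=-\inf\{\sum_j f_j(p_j):\sum_j p_j=0\}$, not $-k\,f(0)$. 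The corrected slope
\[
\mathcal L(f_1,\dots,f_k)=\sum_i\frac{1}{\Vol(P_i)}\int_{P_i} f_i\,dp \;-\; \inf_{\sum p_j=0}\sum_j f_j(p_j)
\]
is still nonnegative under $\sum_i b(P_i)=0$ (Jensen on each $P_i$ gives $\geq\sum_i f_i(b(P_i))$, and the barycenter tuple is feasible for the infimum), with equality exactly when all $f_i$ are affine with a \emph{common} slope---precisely the torus directions. So the idea survives, but your ``two applications of Jensen'' with a single $f$ does not.

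Second, and more seriously, you invoke ``torus-equivariant regularity theory for the real Monge--Amp\`ere system'' to pass from a minimizer to a smooth solution. No such off-the-shelf theory exists for the \emph{coupled} system: the coupling through $e^{-\sum_j\phi_j}$ means the equations are not individually of standard Monge--Amp\`ere type, and boundary regularity on $\partial P_i$ is delicate. The paper explicitly flags this (Remark following Theorem~\ref{thm:OpennessAndHigherOrderEstimates}): extending the Berman--Berndtsson coercivity/variational argument \cite{BermanBerndtsson} to the coupled setting is posed there as an open problem, and is exactly why the paper uses the continuity method instead. So your Step~3, as it stands, assumes what the paper regards as unproved.
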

\begin{remark}
One important point is $\sum_i b(P_i)$ is not in general equal to
$$ b\left(\sum_i P_i\right) = b(P_{-K_X}), $$
hence the condition on $P_1,\ldots,P_k$ in Theorem~\ref{thm:cYTD} is not (a priori) equivalent to existence of a classical K\"ahler-Einstein metric. In fact, non of these conditions imply the other. By Corollary~\ref{cor:example} below, there is an example of a manifold that don't admit Kähler-Einstein metrics but do admit coupled Kähler-Einstein metrics. Moreover, by Remark~\ref{rem:NonStableDecomposition} there is an example of a Kähler-Einstein manifold with decompositions of $c_1(X)$ that don't admit coupled Kähler-Einstein metrics.
\end{remark}
\begin{corollary}
\label{cor:example}
Let $E$ be the rank 2 vector bundle
$$ E = \mathcal O_{\mathbb P^2}(-1) \oplus \mathcal O_{\mathbb P^1}(-1) $$
over $\mathbb P^2 \times \mathbb P^1$ and consider the toric four-manifold $X=\mathbb P(E)$. Then $X$ does not admit a Kähler-Einstein metric. 
On the other hand, let $\pi:X\rightarrow \mathbb P^1$ be the natural projection onto $\mathbb P^1$ and $\beta_1, \beta_2 \in H^{(1,1)}(X)$ be the classes corresponding to the divisors given by $\pi^{-1}(0)$ and $\pi^{-1}(\infty)$, respectively. 
Then
\begin{equation}
    \label{eq:ExampleDecomposition}
    \alpha_1 = \frac{c_1(X)}{2} - \frac{\sqrt{\frac{5}{7}}(\beta_1+\beta_2)}{4}, \, \alpha_2 =  \frac{c_1(X)}{2} + \frac{\sqrt{\frac{5}{7}}(\beta_1+\beta_2)}{4}
\end{equation}
are K\"ahler and the decomposition of $c_1(X)$ given by $(\alpha_1,\alpha_2)$
admits a coupled K\"ahler-Einstein metric.
\end{corollary}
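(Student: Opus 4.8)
The plan is to verify Corollary~\ref{cor:example} by reducing each of its two assertions to the polytope barycenter computations furnished by Theorem~\ref{thm:cYTD}, so the whole proof becomes an explicit exercise in toric geometry. First I would write down the fan and the moment polytope of $X = \mathbb{P}(\mathcal{O}_{\mathbb{P}^2}(-1) \oplus \mathcal{O}_{\mathbb{P}^1}(-1))$ over $\mathbb{P}^2 \times \mathbb{P}^1$. Concretely, $X$ is a $\mathbb{P}^1$-bundle over the four-dimensional toric base $\mathbb{P}^2\times\mathbb{P}^1$, so its fan lives in $\mathbb{R}^4$ (coordinates split as $\mathbb{R}^3$ from the $\mathbb{P}^2$-factor plus $\mathbb{R}^1$ from the $\mathbb{P}^1$-factor, with two extra rays for the fiber directions twisted by the bundle data $(-1,-1)$). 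From this I read off the rays $u_j$ of $-K_X$, i.e.\ the inequalities $\langle u_j, x\rangle \geq -1$ cutting out $P_{-K_X}$. The class $c_1(X)/2$ corresponds to the dilated polytope $\tfrac12 P_{-K_X}$ (after a suitable translation), and the classes $\beta_1,\beta_2$ are the toric divisors coming from the two points $0,\infty \in \mathbb{P}^1$, which correspond to translating $P_{-K_X}$ along the $\mathbb{P}^1$-direction by the two facet normals; thus $\beta_1+\beta_2$ is (cohomologically) a multiple of the pullback of the hyperplane class from $\mathbb{P}^1$, and adding $\pm\tfrac14\sqrt{5/7}(\beta_1+\beta_2)$ to $c_1(X)/2$ amounts to shifting two of the defining inequalities of $\tfrac12 P_{-K_X}$ in opposite directions. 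The parameter $\sqrt{5/7}/4$ is exactly what makes the two shifted polytopes $P_1,P_2$ each still be a genuine (bounded, full-dimensional) polytope, which I would check by confirming the shifted inequality system remains consistent; this establishes that $\alpha_1,\alpha_2$ are Kähler.

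Next I would compute $b(P_{-K_X})$ and show it is nonzero, which by the toric Kähler--Einstein criterion of \cite{WangZhu,ZhouZhu} (the $k=1$ case of Theorem~\ref{thm:cYTD}) proves that $X$ admits no Kähler--Einstein metric. This is a direct integral: slicing $P_{-K_X}$ along the fiber coordinate reduces it to integrating volumes and first moments of a family of polytopes over the base, and the asymmetry introduced by the $(-1,-1)$-twist forces the barycenter off the origin — I expect the nonzero component to lie along the $\mathbb{P}^1\times$ fiber directions. (One can also cite that $\mathbb{P}(\mathcal{O}(-1)\oplus\mathcal{O}(-1))$-type projectivizations of this sort are classically known to be non-KE, but doing the barycenter integral is cleaner and self-contained.)

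For the second assertion I would set $P_1 = \tfrac12 P_{-K_X} + t\,v$ and $P_2 = \tfrac12 P_{-K_X} - t\,v$ where $v$ is the direction corresponding to $\tfrac14(\beta_1+\beta_2)$ up to normalization and $t = \sqrt{5/7}$, chosen so that $P_1+P_2 = P_{-K_X}$ (consistent with \eqref{eq:PolytopeNormalization}); the Minkowski sum of these two translated dilates of the same polytope is again a translate of $P_{-K_X}$, and since the translations cancel, the normalization holds on the nose. By Theorem~\ref{thm:cYTD} it then suffices to prove $b(P_1) + b(P_2) = 0$. Now $b(P_i) = b(\tfrac12 P_{-K_X}) \pm t v = \tfrac12 b(P_{-K_X}) \pm tv$, so $b(P_1)+b(P_2) = b(P_{-K_X})$, which I just showed is \emph{nonzero} — so a naive symmetric splitting does \emph{not} work, and this is where the real content sits. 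The resolution is that $t v$ must be chosen not as a translation but so that the shifted inequalities genuinely change the \emph{shape} of each polytope (shifting a facet inward/outward rather than translating the whole body), making $b$ a genuinely nonlinear function of $t$; then $b(P_1)+b(P_2)$ is an even function of $t$ starting at $b(P_{-K_X})\neq 0$ at $t=0$, and the claim is that at $t = \sqrt{5/7}/4$ (in the appropriate normalization) it vanishes. So the main computation is: express $b(P_1(t)) + b(P_2(t))$ as an explicit rational function of $t$ obtained by integrating over the two deformed polytopes, and verify that $t^2 = 5/7$ (up to the normalizing constant absorbed into the definition of $\beta_1+\beta_2$) is a root. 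This reduces to polynomial algebra in one variable once the volume and moment integrals of the facet-deformed polytope are written down.

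The main obstacle I anticipate is bookkeeping rather than conceptual: getting the lattice normalization exactly right so that the coefficient $\tfrac14\sqrt{5/7}$ matches the root of the barycenter equation, and correctly identifying which facets of $P_{-K_X}$ are moved by $\pm(\beta_1+\beta_2)$ and in which direction. Once the polytope $P_{-K_X}\subset\mathbb{R}^4$ is pinned down by its $\le 7$ facet inequalities and the deformation is parametrized by the single scalar $t$, the remaining integrals are piecewise-polynomial and the verification that $b(P_1)+b(P_2)=0$ becomes a finite check. I would also double-check Kählerness of $\alpha_1,\alpha_2$ independently by confirming the deformed polytopes remain combinatorially equivalent to $P_{-K_X}$ (no facet collapses) throughout $0 \le t \le \sqrt{5/7}/4$, which is what guarantees the classes stay in the Kähler cone.
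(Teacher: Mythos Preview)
Your plan is essentially the paper's own approach: write down the seven rays of the fan of $X$ in $\mathbb{R}^4$, realize $\alpha_1,\alpha_2$ as the divisors $D(c)=c(D_4+D_5)+\tfrac12\sum_{i\neq 4,5}D_i$ for $c=\tfrac12\pm\tfrac14\sqrt{5/7}$ (so the deformation moves the two $\mathbb{P}^1$-facets, not the whole body, exactly as you realized after your translation detour), check that $c\in(\tfrac14,\tfrac34)$ keeps the polytope full-dimensional with no collapsed facet, and then compute $b(P(c))+b(P(1-c))$ as a rational function of $c$ whose numerator $112c^2-112c+23$ vanishes at the stated value.

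Two points where the paper streamlines what you sketch. First, for the non-existence of a K\"ahler--Einstein metric the paper simply cites the Futaki invariant computation in \cite{FMS} rather than computing $b(P_{-K_X})$ by hand; your direct route is fine but redundant. Second, and more usefully, before integrating the paper applies a linear change of lattice coordinates that makes $P(c)$ invariant under an order-$3$ symmetry $B$ fixing only the line spanned by $(0,0,0,1)$; this forces three of the four barycenter coordinates to vanish for every $c$, so the entire verification collapses to a single one-variable integral over $y_4\in[-\tfrac{1}{12},\tfrac{1}{12}]$ with cross-sections that are products of a scaled $2$-simplex and an interval. Without spotting (or engineering) such a symmetry your four-dimensional integral is doable but considerably messier. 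Also, fix your bookkeeping at the outset: the base $\mathbb{P}^2\times\mathbb{P}^1$ is three-dimensional, and the $\mathbb{R}^4$ splits as $\mathbb{R}^2$ (from $\mathbb{P}^2$) $\oplus\,\mathbb{R}$ (from $\mathbb{P}^1$) $\oplus\,\mathbb{R}$ (from the fiber), not $\mathbb{R}^3\oplus\mathbb{R}$ as you wrote.
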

\begin{remark}
It would be interesting to see if there are simpler examples than the one given in Corollary~\ref{cor:example} of manifolds which admit coupled Kähler-Einstein metrics but no Kähler-Einstein metrics. However, when attending this it is important to note that, by Corollary~1.6 in \cite{HultgrenWittNystrom}, the automorphism group of any manifold that admits a coupled K\"ahler-Einstein metric is reductive. Among other things, this rules out $\mathbb P^2$ blown up in one or two points. 
\end{remark}
\begin{remark}
\label{rem:NonStableDecomposition}
The following is an example of a decomposition of $c_1(X)$ on an Einstein manifold that does not admit a coupled K\"ahler-Einstein metric. Let $X$ be the toric Fano manifold acquired by blowing up $\mathbb P^2$ in three points and $D$ be the $(S^1)^n$-invariant divisor in $X$ that corresponds to the ray generated by $(1,1)$ in the fan of $X$. Let $D_t = -K_X/2 + tD$. We have $D_t+D_{-t}=-K_X$. Computer calculations shows that 
$$ b\left(P_{D_t}\right)+b\left(P_{D_{-t}}\right) \not= 0 $$
for small $t$, in other words the decomposition of $c_1(X)$ given by $(c_1(D_t),c_1(D_{-t}))$ does not admit a coupled K\"ahler-Einstein metric for small $t$.
\end{remark}
\begin{remark}
As discussed in \cite{HultgrenWittNystrom}, fixing a K\"ahler class $\alpha$ on $X$ we get a family of decompositions of $c_1(X)$
$$ \{(t\alpha,c_1(X)-t\alpha): t\in (0,t_\alpha) \}, $$
where $t_\alpha = \sup\{t:c_1(X)-t\alpha>0\}$. Assuming they admit coupled K\"ahler-Einstein metrics $(\eta_1^t,\eta_2^t)$ we get a canonical family of metrics $\{\omega_t := \eta_1^t/t\}$ in $\alpha$. Now, let $X$ be a toric Fano surface. By Theorem~\ref{thm:cYTD}, $(t\alpha,\alpha-c_1(X))$ admits a coupled K\"ahler-Einstein metric if and only if
\begin{equation}
    \label{eq:CanonicalFamilyCondition}
    tb(P_{L_\alpha}) + b(P_{-K_X-tL_\alpha}) = 0 
\end{equation}
where $L_\alpha$ is a toric ($\R$-)line bundle such that $c_1(L_\alpha)=\alpha$. On the other hand, it was proven in \cite{Donaldson09} that $\alpha$ admits a constant scalar curvature metric if and only if 
\begin{equation}
    \label{eq:cscKCondition}
    b(P_{L_\alpha}) - \frac{1}{\int_{\partial P_{L_\alpha}}d\sigma}\int_{\partial P_{L_\alpha}} p d\sigma = 0 
\end{equation}
where $d\sigma$ is the measure on $\partial P_{L_\alpha}$ defined by the identity
$$ \left.\frac{d}{dt} \left(\int_{P_{L_\alpha}+tP_{-K_X}} fdp\right)\right|_{t=0} = \int_{\partial P_{L_\alpha}} f d\sigma $$
for all functions $f$ continuous in a neighbourhood of $P$. It would be interesting to understand the relationship between the conditions \eqref{eq:CanonicalFamilyCondition} and \eqref{eq:cscKCondition}. 
\end{remark}

Our second result considers a more general (soliton type) version of \eqref{eq:cKE}, namely, given holomorphic vector fields $V_1,\ldots,V_k$ 
\begin{equation}
    \Ric \omega_1 -L_{V_1}(\omega_1) = \ldots = \Ric \omega_k - L_{V_k}(\omega_k) = \sum_i \omega_i. 
    \label{eq:cKESoliton}
\end{equation}
We will say that a $k$-tuple of K\"ahler metrics satsifying \eqref{eq:cKE} is a \emph{coupled K\"ahler-Ricci soliton}. When $k=1$, \eqref{eq:cKESoliton} reduces to \eqref{eq:ClassicalSoliton} and defines classical K\"ahler-Ricci solitons. As mentioned above these appear as natural solutions to the K\"ahler-Ricci flow. In fact, a similar interpretation in terms of natural solutions to a geometric flow can be given for \eqref{eq:cKESoliton}. Given $k$ K\"ahler metrics $\omega_1^0,\ldots,\omega_k^0$ we may consider the flow defined by 
\begin{eqnarray} 
    \frac{d}{dt} \omega_1^t & = & \Ric\omega_1^t - \sum_i \omega_i^t \nonumber \\
    & \vdots & \nonumber \\
    \frac{d}{dt} \omega_k^t & = & \Ric\omega_k^t - \sum_i \omega_i^t, \label{eq:RicciFlow}
\end{eqnarray}
for $t\in [0,\infty)$. Stationary solutions to \eqref{eq:RicciFlow} are given by coupled K\"ahler-Einstein metrics, i.e. solutions to \eqref{eq:cKE}. On the other hand, putting $V_1=\ldots=V_k=V$ and letting $(\omega_i^t)$ be the flow along $V$ of a $k$-tuple $(\omega_i^0)$ satisfying \eqref{eq:cKESoliton} means $(\omega_i^t)$ will satisfy \eqref{eq:cKESoliton} for each $t$.  Plugging this into the right hand side of \eqref{eq:RicciFlow} gives 
$$ \Ric\omega_j^t - \sum_i \omega_i^t = L_{V} \left(\omega_j^t\right) $$
for all $j$. By definition 
$ \frac{d}{dt} \omega_j^t = L_{V} (\omega_j^t) $ for all $j$, hence $ (\omega_i^t) $  satisfies \eqref{eq:RicciFlow}.

To state our second result we need some terminology. Note that a point in the vector space that is dual to $M\otimes \R$, namely $N\otimes \R$ where $N$ is the lattice consisting of one parameter subgroups in $(\C^*)^n$, determines a holomorphic vector field on $X$. We will call any holomorphic vector field on $X$ that arise in this manner a \emph{toric vector field}. These can be given a concrete description in the following way: By definition, the action of $(\C^*)^n$ on $X$ admits a open, dense and free orbit. Identifying $(\C^*)^n$ with this orbit and letting $\sigma_1,\ldots,\sigma_n$ be the standard logarithmic coordinates on $(\C^*)^n$ the toric vector fields are simply the vector fields that arise as linear combinations of the coordinate vector fields $\frac{\partial}{\partial \sigma_1},\ldots,\frac{\partial}{\partial \sigma_k}$. We will often identify a toric vector field with its associated point in $N\otimes \R$. 

In this context there is a natural vector valued invariant $\mathcal A_V(P)$ determined by a polytope $P$ in $\R^n=M\otimes \R$ and a point $V$ in the dual vector space $N\otimes \R$. To define it we first introduce the $V$-weighted volume of $P$
$$ \Vol_V(P) = \int_P e^{\langle V,p \rangle} dp. $$
Then $\mathcal A_V(P)$ is given by
\begin{equation}
    \mathcal A_{P}(V) = \frac{1}{\Vol_V(P)}\int_{P} p e^{\langle V,p\rangle}dp.
    \label{eq:DefA}
\end{equation}
With respect to this we have:
\begin{theorem}
    \label{thm:MainTheorem}
    Let $V_1,\ldots,V_k$ be toric vector fields on a toric Fano manifold $X$. Assume  $(\alpha_1,\ldots,\alpha_k)$ is a decomposition of $c_1(X)$ and $P_1,\ldots,P_k$ are the associated polytopes. Then there is a $(S^1)^n$-invariant solution $(\omega_1,\ldots,\omega_k)$ to \eqref{eq:cKESoliton} such that $\omega_i\in \alpha_i$ for each $i$ if and only if 
    \begin{equation}
        \sum_i \mathcal{A}_{P_i}(V_i) = 0. 
        \label{eq:SolitonCondition}
    \end{equation}
\end{theorem}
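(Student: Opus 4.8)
The plan is to reduce the PDE system \eqref{eq:cKESoliton} to a convex-analytic variational problem on the space of torus-invariant Kähler potentials, exactly as in the classical toric story but now coupled across the $k$ factors and twisted by the weights $V_i$. First I would fix the standard toric setup: identify the open orbit with $(\C^*)^n$, write each $\omega_i$ on the open orbit as $dd^c\psi_i$ for a convex function $\psi_i$ on $\R^n$ whose gradient image is (a translate of) $P_i$, and record that the $(S^1)^n$-invariant solutions of \eqref{eq:cKESoliton} correspond to solutions of a real Monge-Ampère system
\[
\det\big(\Hess \psi_j\big)\, e^{-\langle V_j,\nabla\psi_j\rangle} = e^{-\sum_i \psi_i + c_j}, \qquad j=1,\ldots,k,
\]
on $\R^n$, where the $c_j$ are constants absorbing normalizations and the right-hand side has the $\sum_i\psi_i$ coupling coming from $\Ric\omega_j - L_{V_j}\omega_j = \sum_i\omega_i$. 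This is the direct generalization of the reduction used in \cite{WangZhu} for a single soliton; the Legendre-dual statement is that $\nabla\psi_j$ pushes the measure $e^{-\sum_i\psi_i+c_j}\,dx$ forward to the weighted measure $e^{-\langle V_j,p\rangle}\mathbf 1_{P_j}(p)\,dp$ on $P_j$, up to normalization.

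Next I would set up the variational framework. Define, on $k$-tuples of $(S^1)^n$-invariant Kähler potentials (equivalently $k$-tuples of convex functions with the right asymptotics/gradient images), the coupled functional
\[
\mathcal F(\psi_1,\ldots,\psi_k) \;=\; \sum_i E_{V_i}(\psi_i) \;+\; \log\!\int_X e^{-\sum_i \psi_i}\,dV,
\]
where $E_{V_i}$ is the $V_i$-weighted Monge-Ampère energy (the weighted analogue of Ding's functional / Mabuchi's $E$-functional), normalized so that its derivative along $\dot\psi_i$ is $\int \dot\psi_i\, e^{-\langle V_i,\nabla\psi_i\rangle}\MA(\psi_i)$. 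Critical points of $\mathcal F$ are precisely the solutions of the Monge-Ampère system above, hence the $(S^1)^n$-invariant coupled Kähler-Ricci solitons in $(\alpha_1,\ldots,\alpha_k)$. Because everything is torus-invariant, $\mathcal F$ descends to a functional of the convex functions $\psi_i$ on $\R^n$, and the standard convexity of weighted Monge-Ampère energies plus convexity of $\log\int e^{-\sum\psi_i}$ makes $\mathcal F$ convex along affine paths $t\mapsto (\psi_i+t u_i)$; strict convexity modulo the obvious translation invariance $\psi_i \mapsto \psi_i + \langle \ell_i, x\rangle$ with $\sum_i\ell_i = 0$ (shifts of the translations of the $P_i$ that preserve the Minkowski-sum normalization). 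So existence is equivalent to coercivity/properness of $\mathcal F$ modulo those translations, and I would analyze this via the recession (asymptotic slope) behavior.

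The heart of the proof — and the main obstacle — is identifying the exact obstruction to properness and showing it equals $\sum_i \mathcal A_{P_i}(V_i) = 0$. Here I would compute the asymptotic slope of $\mathcal F$ along rays $\psi_i \mapsto \psi_i + t g$ for a \emph{common} affine function $g(x) = \langle \xi, x\rangle$ (these are the directions along which $\mathcal F$ fails to be coercive a priori, since $\sum_i\psi_i$ changes by $t k g$ while each weighted energy changes too). Using the Legendre transform / pushforward description, the derivative of $E_{V_i}(\psi_i + t\langle\xi,\cdot\rangle)$ in $t$ is, up to the normalizing constant, $\langle \xi, \mathcal A_{P_i}(V_i)\rangle$, while the log-integral term contributes a piece controlled by $\max_i \sup_{P_i}\langle\xi,\cdot\rangle$ type quantities that, after the precise normalization matching \eqref{eq:PolytopeNormalization}, cancels the ``sum of barycenters of $\sum P_i = P_{-K_X}$'' contribution. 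Tracking constants carefully, the net linear growth rate along the ray $\xi$ turns out to be $\langle \xi, \sum_i \mathcal A_{P_i}(V_i)\rangle$ (after subtracting the part absorbed by the allowed translations). Hence $\mathcal F$ is proper modulo translations — and a solution exists — iff $\sum_i\mathcal A_{P_i}(V_i) = 0$; if this vector is nonzero, picking $\xi$ in its direction gives a ray along which $\mathcal F \to -\infty$, ruling out a minimizer, and conversely the soliton identity forces this vanishing by integrating \eqref{eq:cKESoliton} against the toric vector fields. I would then invoke the standard continuity/a priori estimates for the toric (weighted) Monge-Ampère equation — as in \cite{WangZhu} — to upgrade the minimizer of $\mathcal F$ to a smooth solution, completing the proof. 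The delicate points I anticipate are: (i) pinning down the normalization constants $c_j$ so that the Minkowski-sum condition $\sum_i P_i = P_{-K_X}$ is used correctly, and (ii) justifying that the \emph{only} non-coercive directions are the common-affine ones, which uses that each $\psi_i$ individually has gradient image of finite volume $P_i$ so that non-affine growth is penalized by the weighted energy.
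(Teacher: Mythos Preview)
Your variational approach is genuinely different from the paper's, which uses a continuity method: the paper runs the path \eqref{eq:ContinuitySetup} for $t\in[0,1]$, establishes solvability at $t=0$ via Zhu's theorem, openness via the implicit function theorem (Lemma~\ref{lemma:HInjective}), and closedness via direct Wang--Zhu style $C^0$-estimates on the associated convex functions (Lemma~\ref{lemma:ControlOfW} and the proof of Theorem~\ref{thm:C0Estimates}) followed by $C^{2,\alpha}$-estimates. The necessity of \eqref{eq:SolitonCondition} is obtained separately by a short divergence-theorem computation on the real equation (Lemma~\ref{lemma:Obstruction}), essentially what you call ``integrating \eqref{eq:cKESoliton} against the toric vector fields.'' The paper in fact remarks that extending the Berman--Berndtsson coercivity argument to the coupled setting --- which is what you are proposing --- would be an interesting alternative.

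That said, there is a real gap in your outline at exactly the point you flag as delicate. You compute the slope of $\mathcal F$ along common-affine rays and assert that this characterizes properness, but you have not shown that these are the \emph{only} non-coercive directions, and this is the entire content of the existence direction. Your heuristic that ``non-affine growth is penalized by the weighted energy'' is not an argument; in the Wang--Zhu and Berman--Berndtsson proofs this step consumes all the work (uniform two-sided control of $\inf\sum_i\psi_i$, control of the location of the minimum, volume bounds on sublevel sets), and the paper's Lemma~\ref{lemma:ControlOfW} is precisely this analysis. There is also a technical problem with your rays: adding $t\langle\xi,x\rangle$ to each $\psi_i$ shifts the gradient images to $P_i+t\xi$ and so leaves the admissible class; the correct non-compact directions are translations $\psi_i(\cdot)\mapsto\psi_i(\cdot-t\xi)$ of the argument (the torus action), along which the slope is indeed constant equal to $-\langle\xi,\sum_i\mathcal A_{P_i}(V_i)\rangle$ --- but establishing coercivity modulo this action when that slope vanishes is again the missing hard step. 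Finally, ``invoke continuity/a priori estimates to upgrade the minimizer'' conflates two methods: either you prove properness and then need a regularity theory for the weak minimizer, or you run the continuity method directly as the paper does, in which case the variational framework plays no role.
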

\begin{remark}
Similarly as in Theorem~\ref{thm:cYTD}, the polytopes $P_1,\ldots,P_k$ associated to $(\alpha_1,\ldots,\alpha_k)$ are only well defined up to translations $P_i\rightarrow P_i+c_i$ for $c_i\in \R^n$ satisfying $\sum_i c_i = 0$. On the other hand, similarly as the barycenter, $\mathcal A_V(P)$ satisfies
$$ \mathcal A_{P+c}(V) = \mathcal A_{P}(V)+c, $$
hence the left hand side of \eqref{eq:SolitonCondition} is invariant under such translations.  
\end{remark}
\begin{remark}
Theorem~\ref{thm:MainTheorem} is a generalization of Wang and Zhu's theorem on existence of K\"ahler-Ricci solitons on toric manifolds \cite{WangZhu}. See also \cite{BermanBerndtsson} and \cite{Delcroix} for generalizations in other directions. 
\end{remark}

A straight forward corollary of Theorem~\ref{thm:MainTheorem}, using that \eqref{eq:DefA} is the gradient of a strictly convex and proper function on $\R^n$, is:
\begin{corollary}
    \label{cor:SpecialSoliton}
    Let $(\alpha_i)$ be a decomposition of $c_1(X)$ on a toric Fano manifold. Then there is a unique toric vector field $V$ such that $(\alpha_i)$ admits a $(S^1)^n$-invariant coupled K\"ahler-Ricci soliton where $V_1=\ldots=V_k=V$.
\end{corollary}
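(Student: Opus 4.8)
\emph{Proof outline.} The plan is to exhibit the required vector field as the unique critical point of an explicit smooth, strictly convex, coercive function on $N\otimes\R\cong\R^n$, and then to read off the soliton from Theorem~\ref{thm:MainTheorem}. For a full-dimensional polytope $P\subset\R^n$ I would set $f_P(V):=\log\Vol_V(P)=\log\int_Pe^{\langle V,p\rangle}\,dp$; differentiating under the integral sign gives $\nabla f_P(V)=\Vol_V(P)^{-1}\int_Pp\,e^{\langle V,p\rangle}\,dp=\mathcal A_P(V)$. Writing $g(V):=\sum_if_{P_i}(V)$, the condition \eqref{eq:SolitonCondition} with $V_1=\cdots=V_k=V$ becomes exactly $\nabla g(V)=0$, so by Theorem~\ref{thm:MainTheorem} the corollary reduces to the statement that $g$ has precisely one critical point. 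Note that replacing $P_i$ by $P_i+c_i$ changes $f_{P_i}$ only by the linear term $\langle V,c_i\rangle$, so under the admissible translations ($\sum_ic_i=0$) the function $g$, and hence the whole argument, is unchanged.

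First I would establish strict convexity of $g$. The Hessian of $f_P$ at $V$ is the covariance matrix of the probability measure $\Vol_V(P)^{-1}e^{\langle V,p\rangle}\,dp$ supported on $P$, and this is positive definite precisely when $P$ is not contained in an affine hyperplane. Since each $\alpha_i$ is a K\"ahler class, the associated moment polytope $P_i$ is $n$-dimensional, so every $f_{P_i}$, and therefore $g$, is strictly convex. Consequently $g$ has at most one critical point, and any critical point is its global minimum.

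The hard part --- the only step that is not bookkeeping once Theorem~\ref{thm:MainTheorem} is in hand --- will be to show that a minimum actually exists, i.e. that $g$ is coercive; this is where the Fano hypothesis enters. By Laplace's principle, $\frac{1}{t}f_{P_i}(tu)\to\max_{p\in P_i}\langle u,p\rangle=h_{P_i}(u)$ as $t\to\infty$, so the recession function of the convex function $g$ is $u\mapsto\sum_ih_{P_i}(u)=h_{\sum_iP_i}(u)=h_{P_{-K_X}}(u)$, using the normalization \eqref{eq:PolytopeNormalization} and additivity of support functions under Minkowski sums. Since $X$ is Fano the origin lies in the interior of $P_{-K_X}$, so $h_{P_{-K_X}}$ is strictly positive on $\R^n\setminus\{0\}$; for a finite convex function this is equivalent to coercivity, so $g(V)\to+\infty$ as $|V|\to\infty$. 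Being continuous and coercive, $g$ attains its infimum at some $V^*$.

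To conclude, I would observe that $\nabla g(V^*)=\sum_i\mathcal A_{P_i}(V^*)=0$, that $V^*$ is the unique point with this property by strict convexity, and that Theorem~\ref{thm:MainTheorem} then furnishes an $(S^1)^n$-invariant coupled K\"ahler-Ricci soliton $(\omega_1,\dots,\omega_k)$ with $\omega_i\in\alpha_i$ and $V_1=\cdots=V_k=V^*$. Conversely, any toric vector field $V$ for which $(\alpha_i)$ admits such a soliton satisfies \eqref{eq:SolitonCondition}, i.e. $\nabla g(V)=0$, whence $V=V^*$; this yields uniqueness. The only genuine subtlety is thus the coercivity in the third step and its dependence on $0\in\operatorname{int}P_{-K_X}$; everything else follows directly from Theorem~\ref{thm:MainTheorem} and the convex-analytic structure of $\mathcal A$.
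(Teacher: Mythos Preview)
Your proposal is correct and follows essentially the same approach as the paper: both identify $\sum_i\mathcal A_{P_i}(V)$ as the gradient of $g(V)=\sum_i\log\int_{P_i}e^{\langle V,p\rangle}\,dp$, argue that $g$ is strictly convex and proper (the paper phrases properness as ``its gradient image is $\sum_iP_i=P_{-K_X}$, which contains zero as an interior point''), and then invoke Theorem~\ref{thm:MainTheorem}. Your treatment is simply more detailed, spelling out the covariance-matrix Hessian, the recession-function argument for coercivity, and the use of the ``only if'' direction of Theorem~\ref{thm:MainTheorem} for uniqueness of $V$.
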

\begin{remark}
Naturally, we expect solutions of the flow \eqref{eq:RicciFlow} to converge to the K\"ahler-Ricci solitons in Corollary~\ref{cor:SpecialSoliton}. This parallels the theory in the case $k=1$ (see \cite{TianZhu07}). On the other hand, it is interesting to note that by Theorem~\ref{thm:MainTheorem} there exist a large class of solitons that does not appear as natural solutions to \eqref{eq:RicciFlow} in the sense discussed above (this happens whenever $V_i\not=V_j$ for some $i$ and $j$). This suggests that there is a more general flow, which includes \eqref{eq:RicciFlow} as a special case, and where the solitons of Theorem~\ref{thm:MainTheorem} appear as natural solutions. 
\end{remark}


A second corollary of Theorem~\ref{thm:MainTheorem} is related to the corresponding real Monge-Amp\`ere equation. Let $f_1,\ldots,f_k$ be twice differentiable convex functions on $\R^n$. Let $\nabla f_i$ denote the gradient of $f_i$. Then, given a decomposition $(\alpha_1,\ldots,\alpha_k)$ and associated polytopes $P_1,\ldots,P_k$, existence of coupled K\"ahler-Ricci solitons is equivalent to the solvability of the equation
\begin{equation} 
    \label{eq:RealMA}
    \frac{e^{\langle V_1,\nabla f_1\rangle}}{\Vol_{V_1}(P_1)} \det \left( \frac{d^2f_1}{dx_ldx_m} \right) = \ldots = \frac{e^{\langle V_k,\nabla f_k\rangle}}{\Vol_{V_k}(P_k)} \det \left( \frac{d^2f_k}{dx_ldx_m}\right) = e^{-\sum_i f_i}
\end{equation}
under the boundary conditions
\begin{equation}
    \overline{ \nabla f_i(\R^n) } = P_i
    \label{eq:BoundaryCondition}
\end{equation}
where the left hand side of \eqref{eq:BoundaryCondition} denotes the closure of the image of $\nabla f_i$ in $\R^n$. We will say that a $k$-tuple of polytopes in $\R^n$ is \emph{toric Fano} if it is defined by a decomposition of $c_1(X)$ on a toric Fano manifold. 
\begin{corollary}
    \label{cor:RealEquation}
    Assume $P_1,\ldots,P_k$ is a toric Fano $k$-tuple of polytopes and $V_1, \ldots, V_k \in \R^n$. Then \eqref{eq:RealMA} admits a solution satisfying \eqref{eq:BoundaryCondition} if and only if 
    $$ \sum_i \mathcal{A}_{P_i}(V_i) = 0. $$
    In particular, if $V_1=\ldots=V_k=0$ then \eqref{eq:RealMA} admits a solution satisfying \eqref{eq:BoundaryCondition} if and only if 
    $$ \sum_i b(P_i) = 0. $$
\end{corollary}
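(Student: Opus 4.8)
The plan is to show that solving the real Monge-Amp\`ere system \eqref{eq:RealMA} under the boundary conditions \eqref{eq:BoundaryCondition} is equivalent to finding an $(S^1)^n$-invariant coupled K\"ahler-Ricci soliton, and then invoke Theorem~\ref{thm:MainTheorem}. First I would recall the standard toric dictionary: an $(S^1)^n$-invariant K\"ahler metric $\omega_i$ in the class $\alpha_i$ corresponds, on the open orbit identified with $(\C^*)^n$ with logarithmic coordinates, to a smooth strictly convex function $f_i$ on $\R^n$ (the restriction of the K\"ahler potential to the real slice), and the condition $\omega_i \in \alpha_i$ translates precisely into the Legendre-type boundary condition $\overline{\nabla f_i(\R^n)} = P_i$, where $P_i$ is the polytope associated to $\alpha_i$ normalized by \eqref{eq:PolytopeNormalization}. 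Under this correspondence the complex Monge-Amp\`ere measure $\MA(\omega_i) = (dd^c \phi_i)^n$ becomes, up to the universal constant from $dd^c$, the real Monge-Amp\`ere measure $\det(\Hess f_i)\, dx$; the Ricci form $\Ric\,\omega_i$ pulls back to $-dd^c \log \det(\Hess f_i)$ plus the contribution of the toric-invariant reference metric on $-K_X$, which is exactly what produces the normalizing factor $e^{-\sum_i f_i}$ on the right of \eqref{eq:RealMA} once one writes the equation \eqref{eq:cKESoliton} in potentials; and the Lie derivative term $L_{V_i}(\omega_i)$ along the toric vector field $V_i \in N\otimes\R$ translates into the factor $e^{\langle V_i, \nabla f_i\rangle}$, since the holomorphy potential of $V_i$ with respect to $\omega_i$ is $\langle V_i, \nabla f_i\rangle$. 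The weighted-volume denominators $\Vol_{V_i}(P_i)$ are the normalization constants needed so that both sides of each equation in \eqref{eq:RealMA}, integrated appropriately, are compatible --- they are forced by the requirement that the soliton vector field have the correct normalization (equivalently, that the holomorphy potential of $V_i$ integrate correctly against the volume form), and this is exactly how they enter in the derivation of Theorem~\ref{thm:MainTheorem}.

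Next I would make the translation precise in one direction: given an $(S^1)^n$-invariant solution $(\omega_1,\ldots,\omega_k)$ to \eqref{eq:cKESoliton} with $\omega_i \in \alpha_i$, restrict the potentials to the real slice to obtain convex functions $f_i$; verify that \eqref{eq:cKESoliton}, which at the level of $(1,1)$-forms reads $\Ric\,\omega_i - L_{V_i}(\omega_i) = \sum_j \omega_j$, becomes after taking $dd^c$-potentials the scalar identities in \eqref{eq:RealMA} --- here one fixes the additive constants in the $f_i$ by absorbing them so that the common right-hand side is exactly $e^{-\sum_i f_i}$, which can always be arranged and pins down the constants uniquely; and check \eqref{eq:BoundaryCondition} from $\omega_i \in \alpha_i$. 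Conversely, given $(f_1,\ldots,f_k)$ solving \eqref{eq:RealMA}--\eqref{eq:BoundaryCondition}, the boundary condition guarantees that each $f_i$ is the real profile of a genuine $(S^1)^n$-invariant K\"ahler potential in the class $\alpha_i$ on all of $X$ (this is the standard fact that convex functions with prescribed gradient image equal to the moment polytope extend to toric K\"ahler potentials), and reading \eqref{eq:RealMA} backwards gives \eqref{eq:cKESoliton}. Having established this equivalence, Theorem~\ref{thm:MainTheorem} immediately yields that a solution exists iff $\sum_i \mathcal{A}_{P_i}(V_i) = 0$. The final ``in particular'' clause is then trivial: when all $V_i = 0$ the weighted volume $\Vol_0(P_i) = \Vol(P_i)$ and $\mathcal{A}_{P_i}(0) = \frac{1}{\Vol(P_i)}\int_{P_i} p\,dp = b(P_i)$ by definition, and the system \eqref{eq:RealMA} reduces to the unweighted coupled real Monge-Amp\`ere equation; so the condition becomes $\sum_i b(P_i) = 0$.

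The main obstacle I anticipate is not any single hard estimate --- by design this corollary is a repackaging of Theorem~\ref{thm:MainTheorem} --- but rather the careful bookkeeping of \emph{normalizations and additive constants}. Specifically: (i) matching the $dd^c$ normalization so that the complex-to-real Monge-Amp\`ere passage introduces no stray factors; (ii) checking that the additive freedom in each $f_i$ (which is $k$-dimensional, subject to no constraint individually, but the equation \eqref{eq:RealMA} with common right-hand side $e^{-\sum_i f_i}$ uses it up) can indeed always be fixed so that the $k$ left-hand expressions and the right-hand expression all coincide --- this amounts to solving $k$ scalar normalization equations for $k$ constants, which is where one must be slightly careful that it is consistent; and (iii) verifying that the translation ambiguity $P_i \mapsto P_i + c_i$, $\sum_i c_i = 0$, corresponds exactly to shifting $f_i$ by the linear function $\langle c_i, x\rangle$ and therefore does not affect solvability, consistent with the already-noted translation invariance of $\sum_i \mathcal{A}_{P_i}(V_i)$. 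Once these are handled, the proof is a short deduction; I would present it as roughly half a page, citing the toric correspondence and Theorem~\ref{thm:MainTheorem} and spelling out the normalization step explicitly.
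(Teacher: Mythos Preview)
Your proposal is correct and follows essentially the same route as the paper: establish that the real system \eqref{eq:RealMA}--\eqref{eq:BoundaryCondition} is equivalent to the existence of an $(S^1)^n$-invariant coupled K\"ahler--Ricci soliton, then invoke Theorem~\ref{thm:MainTheorem}. The paper has already packaged the toric dictionary and all the normalization bookkeeping you describe into Lemma~\ref{lemma:RealMA} (together with Lemma~\ref{lemma:ComplexEquation}), so its proof of the corollary is a one-line citation of Theorem~\ref{thm:MainTheorem} and Lemma~\ref{lemma:RealMA}; your plan simply unpacks those lemmas inline.
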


Theorem~\ref{thm:cYTD} essentially follows from considering the case $V_1=\ldots=V_k=0$ in Theorem~\ref{thm:MainTheorem}. Doing this gives that the third point in Theorem~\ref{thm:cYTD} implies the first point. As mentioned above, by a previous result (Theorem~1.15 in \cite{HultgrenWittNystrom}) the first point implies the second point. Finally, an explicit formula for the (coupled) Donaldson-Futaki invariant of test configurations induced by toric vector fields shows that the second point implies the third point. To be more precise, if $V$ is a toric vector field and $(\alpha_i)$ is a decomposition of $c_1(X)$ with associated polytopes $P_1,\ldots,P_k$, then the test configuration for $(\alpha_i)$ induced by $V$ has Donaldson-Futaki invariant 
$$ \left\langle V,\sum_i b(P_i)\right\rangle. $$
It follows that if $\sum_i b(P_i)\not=0$, then there is a test configuration for $(\alpha_i)$ with negative Donaldson-Futaki invariant. By definition, this means $(\alpha_i)$ is not K-polystable (see Section~\ref{sec:TestConfigurations} for a detailed argument).

The main point in the proof of Theorem~\ref{thm:MainTheorem} is to establish a priori $C^0$-estimates along an associated continuity path. More precisely, let $\theta_1,\ldots,\theta_k$ be K\"ahler metrics such that $[\theta_i]=\alpha_i$. Assume, using the Calabi-Yau theorem, that $\omega_0$ is a K\"ahler form such that $\Ric \omega_0 = \sum_i\theta_i$ and $\int_X\omega_0^n = 1$. For each $i$, let $g_i=g_{\theta_i,V_i}$ be a $\theta_i$-plurisubharmonic function on $X$ such that 
$$ dd^c g_i = L_{V_i}(\theta_i) $$
and $\int_Xe^{g_i}\theta_i^n=1$ (see Lemma~\ref{lemma:SolitonPotential}). For $t\in [0,1]$ we will consider the equation 
\begin{equation}
e^{g_1+{V_1}(\phi_1)}\left(\theta_1+dd^c\phi_1\right)^n  = \ldots = e^{g_k+{V_k}(\phi_k)} \left(\theta_k+dd^c\phi_k\right)^n = e^{-t\sum_i \phi_i}\omega_0^n.
\label{eq:ContinuitySetup}
\end{equation}
Moreover, fixing a point $x_0\in X$ we will assume solutions to \eqref{eq:ContinuitySetup} are normalized according to
\begin{equation} 
    \phi_1(x_0) = \ldots = \phi_k(x_0).
    \label{eq:Normalization}
\end{equation}
The significance of this equations is that for $t=1$, a $k$-tuple of functions $\phi_1,\ldots,\phi_k$ such that each $\phi_i$ is $\theta_i$-plurisubharmonic solves \eqref{eq:ContinuitySetup} if and only if the $k$-tuple of K\"ahler metrics $(\theta_i+dd^c\phi_i)$ is a coupled K\"ahler Ricci soliton. 
We prove:
\begin{theorem}
    \label{thm:C0Estimates}
    Let $V_i$, $\alpha_i$ and $P_i$ be as in Theorem~\ref{thm:MainTheorem} and assume \eqref{eq:SolitonCondition} holds. Let $x_0$ be the point in $X$ that, under the identification of $(\C^*)^n$ with its open, dense and free orbit, corresponds to the identity element in $(\C^*)^n$. Then, for any $t_0>0$ there is a constant $C$ such that any solution $(\phi_1,\ldots,\phi_k)$ of \eqref{eq:ContinuitySetup} for $t\geq t_0$, normalized according to \eqref{eq:Normalization}, satisfies 
    $$ \sup_X |\phi_i| < C $$
    for all $i$.
\end{theorem}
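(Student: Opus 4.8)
The plan is to exploit the toric structure to reduce the a priori estimate to a convex-analytic statement about the Legendre transforms of the potentials. First I would pass to logarithmic coordinates: on the open orbit $(\C^*)^n$, the $(S^1)^n$-invariant metric $\theta_i+dd^c\phi_i$ corresponds to a convex function $f_i = u_i + \psi_i$ on $\R^n$, where $u_i$ is a fixed reference convex function with $\overline{\nabla u_i(\R^n)} = P_i$ (coming from $\theta_i$) and $\psi_i$ is the invariant potential $\phi_i$ viewed as a convex function. Rewriting \eqref{eq:ContinuitySetup} in these coordinates turns it into the real Monge-Amp\`ere system
\begin{equation}
\frac{e^{\langle V_1,\nabla f_1\rangle}}{\Vol_{V_1}(P_1)}\det(D^2 f_1) = \cdots = \frac{e^{\langle V_k,\nabla f_k\rangle}}{\Vol_{V_k}(P_k)}\det(D^2 f_k) = e^{-t\sum_i f_i}\,(\text{const}),
\label{eq:proofMAreduction}
\end{equation}
where the normalization \eqref{eq:Normalization} fixes the additive constants by $f_1(0)=\cdots=f_k(0)$ (recall $x_0$ is the identity of $(\C^*)^n$). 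The estimate $\sup_X|\phi_i|<C$ is then equivalent to a two-sided bound on $f_i$ uniform over $t\geq t_0$, which by convexity and the fixed boundary behaviour $\overline{\nabla f_i(\R^n)}=P_i$ reduces to bounding $f_i(0)$ and $\sup_{P_i}|f_i^*|$, where $f_i^*$ is the Legendre transform.

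The key steps, in order, are as follows. Step 1: establish the normalization/compatibility — integrating \eqref{eq:proofMAreduction} and using $\int_X\omega_0^n=1$ together with the choices $\int_Xe^{g_i}\theta_i^n=1$ shows each ``mass'' $\int_{\R^n}e^{-t\sum f_i}$ is controlled, so the problem is genuinely about the shape of the $f_i$, not their total integrals. Step 2: a \emph{lower} bound on $f_i$ (equivalently an upper bound on $\phi_i$). Since $f_i$ is convex with $\nabla f_i$ mapping into the bounded set $P_i$, we have $f_i(x)\leq f_i(0)+\operatorname{diam}(P_i)|x|$, and combined with the fact that $f_i^*$ is finite on $P_i$ one gets $f_i(x)\geq \langle p,x\rangle - f_i^*(p)$ for all $p\in P_i$; the content is to bound $f_i^*$ on $P_i$ from above, for which I would test the Monge-Amp\`ere measure against the constant function and use that $\det(D^2f_i)$ has prescribed total mass $\Vol(P_i)$ (up to the $V_i$-weight), giving an integral bound $\int_{\R^n}f_i\,\det(D^2f_i)\leq C$ that, via the coarea/Legendre identity, controls $\int_{P_i}f_i^*$. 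Step 3: the \emph{upper} bound, i.e. controlling $f_i(0)$ from above, equivalently $\inf \phi_i$. This is where the hypothesis $\sum_i\mathcal A_{P_i}(V_i)=0$ enters decisively: one differentiates the functional whose Euler-Lagrange equation is \eqref{eq:proofMAreduction} — a coupled Ding/Mabuchi-type functional $\mathcal F_t(\psi_1,\dots,\psi_k)$ — and shows that along the continuity path its derivative in $t$ is controlled, while its properness (coercivity) on the space of normalized invariant potentials is exactly equivalent, by a direct computation of the leading-order asymptotics along rays $\psi_i \mapsto \psi_i + s\langle \ell, \cdot\rangle$, to the barycenter-type condition \eqref{eq:SolitonCondition}. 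Coercivity then forces $f_i(0)$ to stay bounded.

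The main obstacle is Step 3, and specifically making the properness argument quantitative and uniform in $t\geq t_0$. The natural functional here is of the form $\mathcal F_t = \sum_i\big(\text{weighted energy of }\psi_i\big) - \frac1t\log\int_{\R^n}e^{-t\sum_i f_i}$, and one must show that the condition $\sum_i\mathcal A_{P_i}(V_i)=0$ makes $\mathcal F_t$ bounded below by a coercive function of $\max_i(\text{osc of }\psi_i)$; the delicate part is that the linear terms from the $k$ different potentials interact only through the single constraint $\sum_i c_i=0$ on admissible translations, so one needs the sharp statement that the piecewise-linear ``asymptotic slope'' functional $\ell\mapsto \sum_i \big(\text{support-function of }P_i\text{ evaluated against the relevant weighted density}\big)$ is nonnegative with equality only at $\ell=0$ precisely when \eqref{eq:SolitonCondition} holds — this is the convex-geometric heart of the theorem, parallel to Wang--Zhu's argument for $k=1$ but genuinely coupled, and it is where one expects to invoke strict convexity and properness of the function on $\R^n$ whose gradient is $V\mapsto\mathcal A_{P_i}(V)$, exactly as flagged before Corollary~\ref{cor:SpecialSoliton}. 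Once this quantitative properness is in hand, uniformity in $t\geq t_0$ follows because the only $t$-dependence is the harmless factor $1/t\leq 1/t_0$, and the $C^0$ bound on each $\phi_i$ drops out by translating back from $f_i$ to $\phi_i$ using the normalization \eqref{eq:Normalization}.
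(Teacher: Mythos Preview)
Your reduction to the real Monge--Amp\`ere system on $\R^n$ and the passage to Legendre transforms matches the paper's setup, but your Step~3 takes a genuinely different route from the paper, and the route you sketch is not made precise at the decisive point.

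The paper does \emph{not} use a variational or coercivity argument for a coupled Ding-type functional. Instead, following Wang--Zhu and Donaldson, it works directly with the single convex function $w = \sum_i(tf_i + (1-t)h_i)$ and proves (Lemma~\ref{lemma:ControlOfW}) that (i) $m=\inf w$ is uniformly bounded above and below, (ii) $w$ grows linearly away from its minimizer $x_w$, and (iii) $|x_w|$ is bounded. Steps (i)--(ii) use only mass normalization and the John--type volume bound of Lemma~\ref{lemma:KVolume}. The hypothesis~\eqref{eq:SolitonCondition} enters only in (iii), through an integral identity rather than a slope computation: the Divergence Theorem gives $\int_{\R^n} \nabla w\, e^{-w}\,dx = 0$, and the change of variables $p=\nabla f_i$ in each summand shows $\int_{\R^n} \nabla(\sum_i f_i)\,e^{-w}\,dx = \sum_i \mathcal A_{P_i}(V_i) = 0$; hence $\int_{\R^n} \nabla(\sum_i h_i)\,e^{-w}\,dx = 0$, and since $e^{-w}$ concentrates near $x_w$ while $\nabla(\sum_i h_i)$ is a \emph{fixed} monotone map, $x_w$ cannot escape to infinity. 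With $w$ controlled, the paper bounds $\int_{P_i}|\nabla u_i|^{n+1}$ (where $u_i=f_i^*$) by the same change of variables, applies Morrey's and Poincar\'e--Wirtinger inequalities to control the oscillation of $u_i$ on $P_i$, and finally pins down $u_i$ at one point via $f_i(0)=-u_i(\nabla f_i(0))$ and the normalization~\eqref{eq:RealNormalization}.

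Your proposed coercivity argument is the Berman--Berndtsson approach for $k=1$, and the paper explicitly flags extending it to the coupled setting as an open and desirable direction (see the Remark following Theorem~\ref{thm:OpennessAndHigherOrderEstimates}), not as something already in hand. Your claim that properness of $\mathcal F_t$ is ``exactly equivalent'' to~\eqref{eq:SolitonCondition} via asymptotic slopes along rays $\psi_i\mapsto\psi_i+s\langle\ell,\cdot\rangle$ is not substantiated, and the coupled normalization allows the $k$ potentials to degenerate in ways not captured by testing a single common direction $\ell$. Finally, your appeal to the strict convexity and properness of $V\mapsto \log\Vol_V(P_i)$ is misplaced: that fact is used in the paper to prove Corollary~\ref{cor:SpecialSoliton} (existence of a suitable $V$), whereas in Theorem~\ref{thm:C0Estimates} the $V_i$ are fixed and~\eqref{eq:SolitonCondition} is assumed outright. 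The gap, then, is at the heart of your Step~3: you have named the right obstruction but not supplied a mechanism to overcome it, while the paper's direct argument sidesteps any functional and handles the coupling by collapsing to the single scalar $w$.
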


In \cite{Pingali} Pingali reduces existence of coupled K\"ahler-Einstein metrics to a priori $C^0$-estimates. This means that Theorem~\ref{thm:MainTheorem} in the special case when $V_1=\ldots=V_k=0$, and thus Theorem~\ref{thm:cYTD}, follows from Theorem~\ref{thm:C0Estimates} above and Pingali's work. For the general case we adapt the argument of Pingali to the soliton setting, essentially following the computations by Tian and Zhu in \cite{TianZhu}. 
Letting $\Aut(X)$ be the automorphism group of $X$ we prove:
\begin{theorem}
    \label{thm:OpennessAndHigherOrderEstimates}
    Let $X$ be a Fano manifold and $V_1,\ldots,V_k$ be holomorphic vector fields in the reductive part of the Lie algebra of $\Aut(X)$ such that $\Im V_i$ generate a compact one parameter subgroup in $\Aut(X)$ for each $i$. Let $(\alpha_i)$ be a decomposition of $c_1(X)$ with representatives $\theta_1,\ldots,\theta_k$ such that $\Im L_{V_i}\theta_i = 0$ for all $i$. Assume also $C^0$-estimates hold for \eqref{eq:ContinuitySetup}, in other words, for each $t_0>0$, there is a constant $C$ such that any solution $(\phi_i)$ to \eqref{eq:ContinuitySetup} at $t>t_0$ satisfies
    $$ \sup_X |\phi_i| < C $$
    for all $i$. Then $(\alpha_i)$ admits a solution to \eqref{eq:cKESoliton}.
\end{theorem}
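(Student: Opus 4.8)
### Proof proposal for Theorem~\ref{thm:OpennessAndHigherOrderEstimates}

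The plan is to run the continuity method along the path \eqref{eq:ContinuitySetup} for $t \in [0,1]$ and show that the set of $t$ for which a solution (normalized by \eqref{eq:Normalization}) exists is non-empty, open, and closed, so that in particular $t=1$ is attained; by the remark following \eqref{eq:Normalization} a solution at $t=1$ is exactly a coupled K\"ahler-Ricci soliton in $(\alpha_i)$. First I would address \emph{non-emptiness}: at $t=0$ the system \eqref{eq:ContinuitySetup} decouples into $k$ independent complex Monge-Amp\`ere equations $e^{g_i + V_i(\phi_i)}(\theta_i + dd^c\phi_i)^n = \omega_0^n$, each of which is a twisted soliton-type equation solvable by the Calabi-Yau theorem together with the standard soliton trick (the $V_i(\phi_i)$ term is handled as in Wang-Zhu / Tian-Zhu by exploiting that $\Im V_i$ generates a compact subgroup, so one restricts to $\Im V_i$-invariant potentials and the zeroth-order term becomes a monotone perturbation). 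Here the hypothesis that $V_i$ lies in the reductive part and that $\theta_i$ can be chosen with $\Im L_{V_i}\theta_i = 0$ is what makes $g_i$ well-defined and real.

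For \emph{openness} I would set up the linearization of \eqref{eq:ContinuitySetup} at a solution $(\phi_i)$ for fixed $t$. Writing $\omega_i = \theta_i + dd^c\phi_i$ and differentiating, the linearized operator acting on $(\psi_1,\ldots,\psi_k)$ has the schematic form $\psi_i \mapsto \Delta_{\omega_i}\psi_i + V_i(\psi_i) + t\sum_j \psi_j$ (with the Laplacian and drift taken with respect to $\omega_i$), mapping into a product of function spaces with the single constraint coming from the normalization \eqref{eq:Normalization}. The operators $L_i u := \Delta_{\omega_i} u + V_i(u)$ are the natural drift-Laplacians, self-adjoint with respect to the weighted volume forms $e^{g_i + V_i(\phi_i)}\omega_i^n$, with kernel the constants; for $t>0$ the coupling term $t\sum_j \psi_j$ is a non-negative, ``positive-definite-on-the-off-diagonal'' perturbation that kills the joint kernel down to the diagonal constants, which is precisely the freedom removed by \eqref{eq:Normalization}. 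So for $t > 0$ the linearized operator is an isomorphism between the appropriate (say, $\Im V_i$-invariant) H\"older spaces, and the implicit function theorem gives openness. A small technical point: one should check that the path can be continued from $t=0$ into $t>0$, where the coupling first switches on; this is the same linearized-invertibility statement applied with the decoupled solution as the base point, valid for any $t>0$ however small.

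The heart of the matter, and the step I expect to be the main obstacle, is \emph{closedness}, i.e. the higher-order a priori estimates. The $C^0$-estimate is assumed by hypothesis. From $\sup_X|\phi_i| < C$ one needs to bootstrap: a Laplacian (second-order) estimate of the form $\sup_X \Delta_{\theta_i}\phi_i < C'$ via a Yau/Aubin-type maximum-principle computation applied to each equation in \eqref{eq:ContinuitySetup} — here the extra terms $V_i(\phi_i)$ and the exponential right-hand side must be controlled, but since $V_i$ and $g_i$ are fixed and $\sup|\phi_i|$ is bounded, and since $\sum_i\phi_i$ appears only through a bounded exponential factor, the classical argument (as in Wang-Zhu, Tian-Zhu, and Pingali's coupled version) goes through; the coupling enters only through already-bounded right-hand sides, so the $k$ Laplacian estimates can be derived essentially one at a time. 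Once the metrics $\omega_i$ are uniformly equivalent to $\theta_i$, Evans-Krylov and Schauder give uniform $C^{2,\alpha}$ and hence $C^\infty$ bounds, and Arzel\`a-Ascoli lets one pass to the limit along any sequence $t_j \to \bar t$, closing the argument. I would also record that the normalization \eqref{eq:Normalization} together with the $C^0$-bound prevents the solutions from escaping to infinity in the $\Aut(X)$-direction, so no compactness is lost at the limit. Combining non-emptiness, openness, and closedness yields a solution at $t=1$, completing the proof.
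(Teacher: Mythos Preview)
Your overall architecture---continuity method on \eqref{eq:ContinuitySetup}, non-emptiness at $t=0$ via Zhu's soliton Calabi--Yau theorem, openness via linearization, closedness via Yau-type second-order estimates and Evans--Krylov---matches the paper exactly. However, two of your steps contain genuine gaps where the coupling is more serious than you indicate.

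For openness, your claim that the coupling term ``kills the joint kernel down to the diagonal constants'' does not follow from the positivity heuristic you give. Writing $L_i = \Delta_{\omega_i} + V_i$, multiplying $L_iv_i + t\sum_j v_j = 0$ by $v_i$ and summing yields only $\sum_i\int|dv_i|_{\omega_i}^2 d\mu = t\int(\sum_j v_j)^2 d\mu$, which does not by itself force the $v_i$ to be constant: you are adding a positive form to a negative one, and cancellation is possible. The paper closes this with a coupled Weitzenb\"ock/Bochner argument (its Lemma~\ref{lemma:HInjective}), using crucially that along the path $\Ric\omega_i - L_{V_i}\omega_i = t\sum_j\omega_j + (1-t)\sum_j\theta_j$, which gives the Ricci lower bound needed to push the first eigenvalue strictly above~$t$.

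For the Laplacian estimate, your assertion that ``the coupling enters only through already-bounded right-hand sides, so the $k$ Laplacian estimates can be derived essentially one at a time'' is incorrect. Yau's inequality applied to the $j$-th equation produces the term $e^{-C_1\phi_j}\Delta_{\theta_j}F_j$ with $F_j = -g_j - V_j(\phi_j) - t\sum_i\phi_i + \log(\omega_0^n/\theta_j^n)$; the piece $-t\,\Delta_{\theta_j}\!\left(\sum_i\phi_i\right)$ is \emph{not} bounded by the $C^0$ estimate---it contains the very second-order quantities you are trying to control, for all indices $i$, not just $i=j$. The paper's Lemma~\ref{lemma:LaplacianEstimate} handles this by bounding $\Delta_{\theta_j}\phi_i$ in terms of $u_i = e^{-C_1\phi_i}(n+\Delta_{\theta_i}\phi_i)$, obtaining at the maximum of each $u_j$ an inequality of the form $0 \geq -Cu_j - t\sum_i M_i + C_2 u_j^{n/(n-1)} - C_3$ with $M_i=\max u_i$, then \emph{summing over $j$} and absorbing the linear $\sum M_i$ into the superlinear term via Young's inequality. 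This is an intrinsically coupled argument and cannot be decoupled as you suggest.
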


We get that the positive part of Theorem~\ref{thm:MainTheorem} follows directly from Theorem~\ref{thm:C0Estimates} and Theorem~\ref{thm:OpennessAndHigherOrderEstimates}. The negative part of Theorem~\ref{thm:MainTheorem} follows directly from a change of variables in \eqref{eq:RealMA} (see Lemma~\ref{lemma:Obstruction}).

\begin{remark}
In \cite{BermanBerndtsson} Berndtsson and Berman uses a variational approach to prove existence of K\"ahler-Ricci solitons on toric log Fano varieties. They give a direct argument for coercivity of the associated Ding functional on $(S^1)^n$-invariant metrics. It would be interesting if this coercivity estimate could be extended to the coupled setting. This would provide a stronger result than this paper in two respects: First of all, it would cover the singular setting of log Fano varieties. Secondly, since this bypasses the higher order a priory estimates from complex geometry it would provide a version of Corollary~\ref{cor:RealEquation} that is valid for all $k$-tuples of polytopes, not only the ones that are defined by decompositions of $c_1(X)$ on toric Fano manifolds. 
\end{remark}

This paper is organized in the following way: Section~\ref{sec:Openness} and Section~\ref{sec:HigherOrderEstimates} are devoted to the proof of Theorem~\ref{thm:OpennessAndHigherOrderEstimates}. In Section~\ref{sec:Openness} we prove openness along the continuity path and solvability at $t=0$. In Section~\ref{sec:HigherOrderEstimates} we prove $C^{2,\alpha}$-estimates assuming $C^0$-estimates, thus finishing the proof of Theorem~\ref{thm:OpennessAndHigherOrderEstimates}. In Section~\ref{sec:C0Estimates} we set up the real convex geometric framework and in Section~\ref{sec:Estimates} we use this to prove the $C^0$-estimate of Theorem~\ref{thm:C0Estimates}. Finally, at the end of Section~\ref{sec:Estimates} we prove Theorem~\ref{thm:MainTheorem}, Corollary~\ref{cor:SpecialSoliton} and Corollary~\ref{cor:RealEquation} and in Section~\ref{sec:TestConfigurations} we prove Theorem~\ref{thm:cYTD}.

\paragraph{Acknowledgements}
The author would like to thank David Witt Nystr\"om for many fruitful discussions relating to this paper and Thibaut Delcroix for his suggestion to allow the $k$ vector fields in equation \eqref{eq:cKESoliton} to be different from one another. Moreover, the author would like to thank Yanir Rubinstein for directing him to \cite{Futaki} when looking for an example of a manifold with nonzero Futaki invariant but reductive automorphism group. The latter suggestion led to Corollary~\ref{cor:example}. 

\section{Openness and higher order estimates}
\label{sec:OpennessAndHigherOrderEstimates}
This Section is devoted to proving Theorem~\ref{thm:OpennessAndHigherOrderEstimates}. 

The following lemma is well known. However, as a courtesy to the reader we include a proof of it. 
\begin{lemma}
    \label{lemma:SolitonPotential}
    Assume X is a Fano manifold, $V$ a holomorphic vector field on $X$ and $\theta$ a K\"ahler form on $X$ such that the imaginary part of $L_V(\theta)$ vanishes. Then there is a smooth real valued function $g$ on $X$ such that 
    $$ dd^c g = L_V(\theta). $$
\end{lemma}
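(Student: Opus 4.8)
The plan is to produce the potential $g$ in two stages: first find a complex-valued $dd^c$-potential for $L_V(\theta)$ coming from the Fano/Hodge-theoretic setup, and then show that the hypothesis on the imaginary part forces that potential to be (cohomologous to) a real one.

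\textbf{Step 1: a potential for $L_V(\theta)$.} Since $X$ is Fano and $V$ is holomorphic, $L_V(\theta)$ is a real closed $(1,1)$-form. The key point is that it is actually $dd^c$-exact, not merely $d$-exact. One way I would see this: write $V$ locally and use the Cartan formula $L_V = d\iota_V + \iota_V d$; since $\theta$ is closed, $L_V(\theta) = d(\iota_V\theta)$, and because $V$ is holomorphic and $\theta$ is of type $(1,1)$, the $1$-form $\iota_V\theta$ has the property that its $(0,1)$-part is $\bar\partial$ of the function obtained by contracting $V$ against the local Kähler potential. Concretely, in a local holomorphic chart with $\theta = dd^c u = i\partial\bar\partial u$ and $V = \sum V^j\partial/\partial z^j$ holomorphic, one computes $\iota_V\theta = i\,\partial\!\left(\sum_j V^j \partial_j u\right)$ up to a $\bar\partial$-closed (hence, by holomorphicity of $V$, actually locally $\bar\partial$-exact via a holomorphic function whose imaginary contribution can be absorbed) term, so that $L_V(\theta) = dd^c\left(\Re\sum_j V^j\partial_j u\right)$ locally. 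The genuinely invariant way to package this is via the moment-map/holomorphy-potential formalism: relative to a reference metric in $c_1(X)$ one has a holomorphy potential, but here I only need a potential relative to $\theta$ itself, so I would instead argue cohomologically — $L_V(\theta)$ represents $0$ in $H^{1,1}_{BC}(X)$ because $\theta$ and its pullback under the time-$s$ flow of $\Re V$ are cohomologous Kähler forms and differ to first order by $L_{\Re V}\theta$, and $\partial\bar\partial$-exactness on a compact Kähler manifold follows from $d$-exactness together with the $\partial\bar\partial$-lemma. This yields a smooth \emph{complex-valued} function $h$ with $dd^c h = L_V(\theta)$, unique up to an additive constant once we demand, say, $\int_X h\,\theta^n = 0$.

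\textbf{Step 2: reality.} Write $h = g + i\psi$ with $g,\psi$ real and smooth. Then $dd^c h = dd^c g + i\,dd^c\psi$. Since $\theta$ is a real form and $V$ is such that $\Im L_V(\theta) = 0$ by hypothesis, and since $L_V(\theta)$ is real automatically when $V$ is real but here $V$ may be complex — this is exactly where the hypothesis bites: it says $L_V(\theta)$ is a \emph{real} $(1,1)$-form. Taking real and imaginary parts of $dd^c h = L_V(\theta)$ and using that $dd^c$ of a real function is a real form, I get $dd^c\psi = \Im\bigl(L_V(\theta)\bigr) = 0$. On a compact Kähler manifold a real function with $dd^c\psi = 0$ is $pluriharmonic$, hence (integrating against $\theta^{n-1}\wedge(\text{stuff})$ or simply invoking that $i\partial\bar\partial\psi=0$ forces $\psi$ constant on a compact complex manifold by the maximum principle applied to $\psi$ and $-\psi$) constant. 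Therefore $h$ differs from the real function $g$ by an imaginary constant, and $dd^c g = dd^c h = L_V(\theta)$, as desired. (If one prefers to avoid the normalization in Step 1, one can alternatively just replace $h$ by $\Re h$ directly once one knows $dd^c(\Im h)=0$.)

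\textbf{Main obstacle.} The only nontrivial point is Step 1 — establishing that $L_V(\theta)$ is $dd^c$-exact (as opposed to merely closed, or merely $d$-exact) and producing a smooth global potential; everything in Step 2 is soft (maximum principle / $\partial\bar\partial$-lemma bookkeeping). I expect the cleanest writeup uses the $\partial\bar\partial$-lemma: $L_V\theta = d(\iota_V\theta)$ is $d$-exact, it is of pure type $(1,1)$ (since $V$ is holomorphic and $\theta$ is $(1,1)$, $L_V\theta$ is again $(1,1)$), and a $d$-exact $(1,1)$-form on a compact Kähler manifold is $\partial\bar\partial$-exact; converting $\partial\bar\partial$ to $dd^c$ and choosing the potential to be real via Step 2 finishes the proof. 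I would present it in that order, flagging that the holomorphicity of $V$ is what keeps $L_V\theta$ of type $(1,1)$ and the reality hypothesis is what upgrades the complex potential to a real one.
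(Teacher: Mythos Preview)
Your argument is correct, but it takes a different route from the paper's. The paper observes that $\iota_V\theta$ is a $\bar\partial$-closed $(0,1)$-form (since $\bar\partial\iota_V\theta = -\iota_V\bar\partial\theta = 0$ for holomorphic $V$ and closed $\theta$), and then uses the Fano hypothesis via Kodaira vanishing to conclude $H^{0,1}(X)\cong H^1(X,\mathcal O_X)=0$; hence $\iota_V\theta = i\bar\partial g$ for some smooth $g$, and $dd^c g = \partial\iota_V\theta = L_V\theta$. You instead note that $L_V\theta = d(\iota_V\theta)$ is a $d$-exact form of pure type $(1,1)$ and invoke the $\partial\bar\partial$-lemma on a compact K\"ahler manifold. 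Your approach is in fact more general --- it never uses the Fano assumption --- whereas the paper's approach pinpoints exactly where Fano enters and exhibits $g$ concretely as a $\bar\partial$-primitive of $\iota_V\theta$. Both proofs finish with the same reality argument you spell out in Step~2; the paper compresses it to one sentence. A minor aside: in your local computation the roles of $\partial$ and $\bar\partial$ are swapped --- one has $\iota_V\theta = i\bar\partial(\sum_j V^j\partial_j u)$, a $(0,1)$-form, not $i\partial(\ldots)$ --- but since you abandon that line for the $\partial\bar\partial$-lemma anyway, it does not affect the argument. Your Step~1 would read more cleanly if you went straight to the $\partial\bar\partial$-lemma and dropped the local and moment-map digressions.
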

\begin{proof}
Since $V$ is a holomorphic vector field, the contraction operator $i_V$ anti-commutes with $\bar\partial$, hence $i_V\theta$ is a $\bar\partial$-closed $(0,1)$-form. By the Kodaira Vanishing Theorem, since $X$ is Fano, the sheaf cohomology group 
$$H^1(X,\mathcal O)=H^1(X,-K_X+K_X) = 0.$$ 
This means the Dolbeault cohomology group 
$$H^{(0,1)}(X)\cong H^1(X,\mathcal O) = 0,$$
hence $i_V \theta_i$ is also $\bar \partial$-exact. Let $g$ be a smooth function such that $\sqrt{-1}\bar\partial g = i_V\theta$. As $L_V(\theta)$ is real, so is $g$. Moreover, $$ dd^cg = i\partial\bar\partial g = \partial i_V \theta = L_V(\theta). $$
This proves the lemma. 
\end{proof}

Now, let $\phi_i$ be a smooth function in $\Psh(X,\theta_i)$. Noting that 
$$ L_{V_i}(dd^c\phi) = \partial i_{V_i} \sqrt{-1}\partial \bar\partial \phi_i = \sqrt{-1}\partial\bar\partial i_{V_i}\partial  \phi_i = dd^c V_i(\phi_i) $$
we see that $dd^c(g_i+{V_i}(\phi_i)) = L_V(\theta_i+dd^c\phi_i)$. This means that, similarly as in \cite{HultgrenWittNystrom}, we get:
\begin{lemma}
\label{lemma:ComplexEquation}
Let $X$ be a Fano manifold, $V_1,\ldots,V_k$ holomorphic vector fields on $X$ and $(\alpha_i)$ a $k$-tuple of K\"ahler classes on $X$ such that $\sum \alpha_i = c_1(X)$. Assume each class $\alpha_i$ has a representative $\theta_i$ such that $\Im L_V (\theta_i)=0$. Then $(\phi_1,\ldots,\phi_k)$ is a solution to \eqref{eq:ContinuitySetup} at $t=1$ if and only if the $k$-tuple of K\"ahler metrics $(\theta_i+dd^c\phi_i)$ is a coupled K\"ahler-Ricci soliton. 
\end{lemma}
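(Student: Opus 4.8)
The plan is to show that, given the cohomological hypothesis $\sum_i \alpha_i = c_1(X)$ together with representatives $\theta_i$ for which $\Im L_{V_i}(\theta_i)=0$, the complex equation \eqref{eq:ContinuitySetup} at $t=1$ is, on the level of $dd^c$ of both sides, exactly the defining system \eqref{eq:cKESoliton} for $(\omega_i)=(\theta_i+dd^c\phi_i)$, so that the two statements are equivalent once one checks that the scalar (normalization) ambiguity is consistent. First I would fix notation: write $\omega_i = \theta_i + dd^c\phi_i$, which is automatically K\"ahler whenever $\phi_i\in\Psh(X,\theta_i)$ is chosen so that $\omega_i>0$, and recall from Lemma~\ref{lemma:SolitonPotential} that $g_i$ is the chosen potential with $dd^c g_i = L_{V_i}(\theta_i)$ and $\int_X e^{g_i}\theta_i^n = 1$. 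The computation displayed just before the lemma gives $dd^c\big(g_i + V_i(\phi_i)\big) = L_{V_i}(\theta_i + dd^c\phi_i) = L_{V_i}(\omega_i)$, so the function $h_i := g_i + V_i(\phi_i)$ is a potential for $L_{V_i}(\omega_i)$ with respect to $\omega_i$ in the same way $g_i$ was for $\theta_i$.

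The key computation is the Ricci identity: if $\eta_i := e^{g_i + V_i(\phi_i)}\omega_i^n$ is the common value of the first $k$ expressions in \eqref{eq:ContinuitySetup}, then taking $-dd^c\log$ of $\omega_i^n$ relates $\Ric\omega_i$ to $\Ric\theta_i$, and since $\sum_j [\theta_j] = c_1(X)$ one can choose the background so that $\Ric\omega_0 = \sum_j\theta_j$ with $\int_X\omega_0^n=1$ (Calabi--Yau), exactly as in the continuity-path setup. Then at $t=1$ the right-hand side of \eqref{eq:ContinuitySetup} reads $e^{-\sum_j\phi_j}\omega_0^n$, and applying $-dd^c\log(\cdot)$ to the identity
\[
  e^{g_i + V_i(\phi_i)}\omega_i^n = e^{-\sum_j \phi_j}\omega_0^n
\]
yields $\Ric\omega_i - dd^c(g_i + V_i(\phi_i)) = \Ric\omega_0 + \sum_j dd^c\phi_j = \sum_j\theta_j + \sum_j dd^c\phi_j = \sum_j\omega_j$. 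Using $dd^c(g_i+V_i(\phi_i)) = L_{V_i}(\omega_i)$ from the preceding paragraph, this is precisely $\Ric\omega_i - L_{V_i}(\omega_i) = \sum_j\omega_j$ for each $i$, i.e. \eqref{eq:cKESoliton}. Conversely, given a coupled K\"ahler-Ricci soliton $(\omega_i)$ in the classes $(\alpha_i)$, each $\omega_i = \theta_i + dd^c\phi_i$ for some $\phi_i$ unique up to a constant, and reversing the $dd^c\log$ computation shows that $e^{g_i + V_i(\phi_i)}\omega_i^n$ and $e^{-\sum_j\phi_j}\omega_0^n$ have the same $dd^c\log$, hence agree up to a multiplicative constant; the $i$-independence of these constants and their being equal to $1$ is arranged by the normalizations $\int_X e^{g_i}\theta_i^n = 1$, $\int_X\omega_0^n = 1$, $\int_X\omega_i^n = \int_X\theta_i^n$ together with an additive shift of the $\phi_i$ (which does not affect $\omega_i$), so \eqref{eq:ContinuitySetup} holds at $t=1$.

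The main obstacle, such as it is, is bookkeeping rather than anything deep: one must verify that the integrability/normalization constants on the two sides genuinely match, which requires knowing $\int_X e^{g_i+V_i(\phi_i)}\omega_i^n = \int_X e^{g_i}\theta_i^n$ — this is the statement that the weighted volume $\int_X e^{h}\omega^n$ is independent of the choice of K\"ahler form in a fixed class with a fixed holomorphic vector field, which follows from a standard $\partial\bar\partial$-integration-by-parts argument (it is the same fact underlying the well-definedness of the modified Futaki invariant). I would cite the analogous discussion in \cite{HultgrenWittNystrom} and \cite{TianZhu} for this, exactly as the paragraph preceding the lemma already does ("similarly as in \cite{HultgrenWittNystrom}"). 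The only place the Fano/vanishing hypothesis enters is through Lemma~\ref{lemma:SolitonPotential}, which provides the $g_i$, and through Calabi--Yau to produce $\omega_0$; both are already available.
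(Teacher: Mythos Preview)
Your proposal is correct and is precisely the argument the paper has in mind: the paper does not actually write out a proof of this lemma but simply points to the identity $dd^c(g_i+V_i(\phi_i))=L_{V_i}(\theta_i+dd^c\phi_i)$ established in the paragraph preceding the statement and says ``similarly as in \cite{HultgrenWittNystrom}''. Your $-dd^c\log$ computation, together with the weighted-volume invariance \eqref{eq:WeightedVolumeInvariant} (which the paper proves later in the openness argument) to pin down the multiplicative constants in the converse direction, is exactly the standard route and matches what the reference would supply.
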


\subsection{Openness}
\label{sec:Openness}
Here we will prove the first part of Theorem~\ref{thm:OpennessAndHigherOrderEstimates}, namely that the set of $t$ such that \eqref{eq:ContinuitySetup} is solvable is open. 

We define the following Banach spaces 
$$ A= \left\{(\phi_i)\in \left(C^{4,\alpha}(X)\right)^k: \phi_1(x_0)=\ldots=\phi_k(x_0)\right\} $$
and 
$$ B = \left\{(v_i)\in \left(C^{2,\alpha}(X)\right)^k: \int_X v_1 \mu = \ldots =\int_X v_k \mu\right\}. $$ 

Let $F:\R\times A\rightarrow B\times \R^{k-1}$ be defined by
$$
F(t,(\phi_i)) = 
\begin{pmatrix} 
\log\frac{(\theta_1+dd^c\phi_1)^n}{\theta_1^n} + g_1+{V_1}(\phi_1) + t\sum \phi_i \\
\vdots \\
\log \frac{(\theta_k+dd^c\phi_k)^n}{\theta_k^n} +g_k+{V_k}(\phi_k) + t\sum \phi_i \\
\phi_2(x_0)-\frac{1}{k}\sum_i \phi_i(x_0) \\
\vdots \\
\phi_1(x_0)-\frac{1}{k}\sum_i \phi_i(x_0)
\end{pmatrix}.
$$
Note that $F(t,(\phi_i))=0$ if and only if $(\phi_i)$ defines a normalized solution to \eqref{eq:ContinuitySetup} at $t$. Moreover, in this case the measure
$$ \mu := \left(\theta_i+dd^c\phi_i\right)^n e^{g_i+{V_i}(\phi_i)} $$ 
is independent of $i$.

\begin{lemma}
\label{lemma:Linearization}
The linearization of $F$ at $(t,\phi)$ with respect to the second argument is given by $H:A\rightarrow B\times \R^{k-1}$ defined by
\begin{equation}
\label{eq:Linearization}
H(v_1,\ldots,v_k) = \begin{pmatrix} 
-\Delta_{\omega_1} v_1 + {V_1}(v_1) + t\sum v_i \\
\vdots \\
-\Delta_{\omega_k} v_k + {V_k}(v_k) + t\sum v_i \\
v_2(x_0)-\frac{1}{k}\sum_i v_i(x_0) \\
\vdots \\
v_k(x_0)-\frac{1}{k}\sum_i v_i(x_0) 
\end{pmatrix}.
\end{equation}
where $\omega_i=\theta_i+dd^c\phi_i$ and $\Delta_{\omega_i}$ is the associated Laplace-Beltrami operator. Moreover, $H$ is elliptic. Finally, assume $F(t,\phi)=0$ and let $\langle\cdot,\cdot\rangle$ be the inner product on $(C^{2,\alpha}(X))^k$ given by
$$ \langle(u_i),(v_i)\rangle = \sum_i \int_X u_iv_i d\mu $$
Then $\langle H(u_1,\ldots,u_k) , (v_i)\rangle = \langle (u_i),H(v_1,\ldots,v_k) \rangle$ for any $(u_i),(v_i)\in (C^{2,\alpha})^k$.
\end{lemma}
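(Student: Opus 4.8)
The plan is to compute the Gâteaux derivative of $F$ in the second slot directly, verify ellipticity from the form of the principal symbol, and then check the self-adjointness claim by an integration by parts against the measure $\mu$.

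\textbf{Step 1: Computing the linearization.} I would differentiate $F(t,\phi + s v)$ with respect to $s$ at $s=0$. The only nontrivial term is the first one in each block, $\log\frac{(\theta_i+dd^c\phi_i)^n}{\theta_i^n}$. Writing $\omega_i = \theta_i + dd^c\phi_i$, the standard formula $\frac{d}{ds}\log\frac{(\omega_i + s\, dd^c v_i)^n}{\theta_i^n}\big|_{s=0} = n\frac{dd^c v_i \wedge \omega_i^{n-1}}{\omega_i^n} = \Delta_{\omega_i} v_i$ gives that term; here $\Delta_{\omega_i}$ is the complex (half-)Laplacian, i.e. the Laplace-Beltrami operator of $\omega_i$ normalized so $\Delta_{\omega_i} v = \mathrm{tr}_{\omega_i}(dd^c v)$. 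The terms $g_i$ are independent of $\phi$, the term $V_i(\phi_i)$ is linear in $\phi_i$ so contributes $V_i(v_i)$, and $t\sum_j \phi_j$ contributes $t\sum_j v_j$. The final $k-1$ components of $F$ are affine in $\phi$, so they differentiate to $v_\ell(x_0) - \frac1k\sum_i v_i(x_0)$. This produces exactly \eqref{eq:Linearization}.

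\textbf{Step 2: Ellipticity.} Each of the first $k$ components of $H$ is, as an operator on $v_i$ alone, $-\Delta_{\omega_i} + $ lower order, and $\Delta_{\omega_i}$ is (up to the factor $1/2$ and a positive conformal factor coming from the metric) the Riemannian Laplace-Beltrami operator, which is elliptic with positive-definite principal symbol. The coupling term $t\sum_j v_j$ and the last $k-1$ (finite-rank, zeroth-order) components do not affect the principal symbol, so the full system $H$ is elliptic as a map $(C^{4,\alpha})^k \to (C^{2,\alpha})^k$, and the finite-dimensional part just records point evaluations. I would remark that the relevant notion here is ellipticity of the leading $k\times k$ diagonal block, which is immediate.

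\textbf{Step 3: Self-adjointness.} Assume $F(t,\phi)=0$, so $\mu = \omega_i^n e^{g_i + V_i(\phi_i)}$ is the same measure for every $i$. For fixed $i$, I claim $\int_X (\Delta_{\omega_i} u - V_i(u))\, w \, d\mu = \int_X u\,(\Delta_{\omega_i} w - V_i(w))\, d\mu$. The key identity is that $d\mu = e^{g_i + V_i(\phi_i)}\omega_i^n$ and $dd^c(g_i + V_i(\phi_i)) = L_{V_i}(\omega_i)$; consequently the drift Laplacian $\Delta_{\omega_i} \,\cdot\, - V_i(\cdot)$ is exactly the weighted Laplacian associated to the weight $e^{g_i + V_i(\phi_i)}$, which is formally self-adjoint with respect to $d\mu$ by the standard integration-by-parts computation $\int_X (\Delta_{\omega_i} u)\, w\, e^h \omega_i^n = -\int_X \langle \nabla u, \nabla w\rangle_{\omega_i} e^h \omega_i^n - \int_X w\,\langle \nabla u, \nabla h\rangle_{\omega_i} e^h\omega_i^n$ and noting $\langle \nabla u, \nabla h \rangle_{\omega_i} = V_i(u) + \overline{V_i}(u)$, which on real-valued functions equals $2\,\mathrm{Re}\,V_i(u)$ — I would be careful here with the real/imaginary conventions, since $\Im L_{V_i}\theta_i = 0$ and the $\phi_i$ are real, so $V_i(\phi_i)$ real and the relevant pairing is real. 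Summing over $i$ and using that $d\mu$ is $i$-independent, the coupling terms contribute $\sum_i \int_X (t\sum_j u_j)\, v_i\, d\mu = t \int_X (\sum_j u_j)(\sum_i v_i)\, d\mu$, which is manifestly symmetric in $(u)\leftrightarrow(v)$. The point-evaluation components pair to zero against the $B$-part and do not enter $\langle\cdot,\cdot\rangle$. Hence $\langle H(u),(v)\rangle = \langle(u),H(v)\rangle$.

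\textbf{Main obstacle.} The only delicate point is Step 3: getting the bookkeeping right between the complex operator $V_i(\cdot)$ (a $(1,0)$-derivative), its conjugate, and the real Riemannian gradient pairing, together with checking that $dd^c(g_i+V_i(\phi_i)) = L_{V_i}(\omega_i)$ is precisely what makes the drift term integrate by parts cleanly against $\mu$ — this is where the hypothesis $\Im L_{V_i}\theta_i = 0$ and the normalization of $g_i$ are used. Everything else is routine.
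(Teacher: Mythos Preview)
Your approach is essentially identical to the paper's: differentiate termwise using $\frac{d}{ds}\log\frac{(\omega_i+s\,dd^c v_i)^n}{\theta_i^n}\big|_{s=0}=\Delta_{\omega_i}v_i$, observe that the principal symbol is block-diagonal with elliptic Laplacians on the diagonal, and reduce self-adjointness to the Tian--Zhu integration-by-parts identity $\int_X(\Delta_{\omega_i}v+V_i(v))u\,d\mu=-\int_X\langle dv,du\rangle_{\omega_i}\,d\mu$ together with the manifest symmetry of the coupling term. Two minor remarks: the sign $-\Delta_{\omega_i}$ in the displayed formula \eqref{eq:Linearization} is a typo (the paper's own proof computes $+\Delta_{\omega_i}$, as you do), and in your Step~3 the drift should be $+V_i$ rather than $-V_i$ to match the linearization you computed in Step~1---the weighted-Laplacian argument you sketch goes through once that sign is aligned.
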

\begin{proof}
Equation \eqref{eq:Linearization} follows from straight forward differentiation and the well known identity
$$ \left.\frac{d}{ds}\log\frac{(\theta_i+dd^c(\phi_i+sv_i))^n}{\theta_i^n}\right|_{s=0} = 
n\frac{dd^cv(\theta_i+dd^c\phi_i)^{n-1}}{(\theta_i+dd^c\phi_i)^n} = \Delta_{\omega_i}v_i. $$


Now, $H$ takes the following form in local coordinates:
$$ (u_i)\mapsto (v_j) = \left(\sum_{i,l,m} a^{lm}_{ij}(x)\frac{\partial^2u_i}{\partial x_l\partial x_m} + \textnormal{lower order terms}\right) $$
where $a_{ij}^{lm}=0$ if $i\not=j$ and $\{a_{ii}^{lm}\}_{l,m}$ are the coefficients for the Laplace operator $\Delta_{\omega_i}$. Recall that $H$ is elliptic if the matrix
\begin{equation} \left(\sum_{l,m}a^{lm}_{ij}(x)\xi_l\xi_m\right) \label{eq:EllipticityMatrix} \end{equation}
is invertible for all $p\in X$ and all non-zero $\xi=\sum \xi_l\frac{\partial}{\partial x_l}\in T_pX$. But this follows immediately. To see this note that 
$$\sum_{l,m}a^{lm}_{ij}(x)\xi_l\xi_m$$
is 0 if $i\not= j$ and, by ellipticity of $\Delta_{\omega_i}$, positive if $i=j$. This means \eqref{eq:EllipticityMatrix} is a diagonal matrix with positive entries on the diagonal, hence it is invertible.  

We will now prove the last statement in the lemma. It is a consequence of the following identity for functions $u,v\in C^{2,\alpha}(X)$ (see Lemma~2.2 in \cite{TianZhu}):
\begin{equation} \int_X (\Delta_{\omega_i} v + {V_i}(v))u d\mu = - \int_X \langle dv,du\rangle_{\omega_i} d\mu. \label{eq:PartialIntegration} \end{equation}
We get
\begin{eqnarray}
& & \sum_i \int_X \left(\Delta_{\omega_i}v_i + {V_i}(v_i) + \sum_j v_j\right)u_i d\mu \nonumber \\
& = & -\sum_i \int_X \langle dv_i,du_i \rangle d\mu + \sum_{i,j} \int_X v_ju_id\mu \nonumber \\
& = & \sum_i \int_X v_i\left(\Delta_{\omega_i}u_i + {V_i}(u_i) + \sum_j u_j\right) d\mu, \nonumber
\end{eqnarray}
and the last statement in the lemma follows. 
\end{proof}

\begin{lemma}
\label{lemma:HInjective}
Assume $t\in [0,1)$ and $(v_i)\in (C^{4,\alpha}(X))^k$ are not all constant and satisfies
\begin{equation}
    \Delta_{\omega_1} v_1 + V_i(v_1) = \ldots = \Delta_{\omega_k} v_k + V_i(v_k) = \lambda\sum_i v_i
    \label{eq:LaplaceEigenfunction}
\end{equation}
for a $k$-tuple $\omega_1,\ldots,\omega_k$ satisfying
\begin{equation} 
\Ric \omega_1 - L_V(\omega_1) = \ldots = \Ric \omega_k - L_V(\omega_k) = t\sum_i \omega_i + (1-t)\sum_i \theta_i.
\label{eq:RicContinuityMethod}
\end{equation}
Then $\lambda > t$.
\end{lemma}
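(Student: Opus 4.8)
The plan is to adapt the standard Bochner/Futaki-type argument for the first eigenvalue of the drift Laplacian, as in Tian--Zhu, to the coupled setting. First I would set $L_i = \Delta_{\omega_i} + V_i(\cdot)$, so that \eqref{eq:LaplaceEigenfunction} reads $L_i v_i = \lambda \sum_j v_j =: \lambda w$ for every $i$, where $w = \sum_j v_j$. The operator $L_i$ is self-adjoint with respect to the measure $d\mu_i := e^{g_i + V_i(\phi_i)}\omega_i^n$ (this is exactly the content of the integration-by-parts identity \eqref{eq:PartialIntegration} used in Lemma~\ref{lemma:Linearization}), and by \eqref{eq:RicContinuityMethod} the relevant Bakry--\'Emery Ricci curvature of $(\omega_i, d\mu_i)$ is bounded below: indeed $\Ric\omega_i - L_{V_i}\omega_i = t\sum_j\omega_j + (1-t)\sum_j\theta_j \geq t\,\omega_i$ since all the other terms are positive $(1,1)$-forms. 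The weighted Bochner formula then gives, for each $i$,
\begin{equation*}
\int_X |\nabla\nabla v_i|^2\, d\mu_i + \int_X \big(\Ric\omega_i - L_{V_i}\omega_i\big)(\nabla v_i, \overline{\nabla v_i})\, d\mu_i = \int_X |L_i v_i|^2\, d\mu_i - \int_X |\nabla v_i|^2\, d\mu_i \cdot (\text{eigenvalue bookkeeping}),
\end{equation*}
but it is cleaner to phrase it as the inequality $\int_X |L_i v_i|^2 d\mu_i \geq (1+t)\int_X |\nabla v_i|^2 d\mu_i$ with equality forcing $\nabla v_i$ to be holomorphic; here I am using $\Ric\omega_i - L_{V_i}\omega_i \geq t\omega_i$ plus the extra $+1$ from the Hessian term being compared against $\Delta$, exactly the $\lambda_1 \geq 1$ statement in the non-coupled weighted case shifted by $t$.

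Next I would combine these across $i$. Pairing $L_i v_i = \lambda w$ against $v_i$ with respect to $d\mu_i$ and summing gives $\lambda \sum_i \int_X w\, v_i\, d\mu_i = -\sum_i \int_X |\nabla v_i|^2 d\mu_i$... wait, the signs and the fact that the measures $d\mu_i$ are a priori different for different $i$ is the subtlety. The key structural observation, already noted after the definition of $F$, is that when $F(t,\phi)=0$ all the measures coincide: $d\mu_i = d\mu$ independent of $i$. So I would work with this common $\mu$. Then summing $L_i v_i = \lambda w$ against $v_i$ over $i$ and integrating against $d\mu$:
\begin{equation*}
-\sum_i \int_X |\nabla v_i|^2\, d\mu = \sum_i \int_X (L_i v_i)\, v_i\, d\mu = \lambda \int_X w \sum_i v_i\, d\mu = \lambda \int_X w^2\, d\mu.
\end{equation*}
This shows $\lambda \leq 0$ unless... no — this would give $\lambda < 0$, contradicting the intended conclusion $\lambda > t \geq 0$. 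So the sign convention must be that $\Delta_{\omega_i}$ here is the \emph{negative} (geometer's) Laplacian, or equivalently \eqref{eq:PartialIntegration} has the form making $L_i$ negative semidefinite, in which case $\int (L_i v_i) v_i\, d\mu = +\int |\nabla v_i|^2 d\mu$ would be wrong too. I would pin down the sign convention from \eqref{eq:PartialIntegration}: it states $\int_X (\Delta_{\omega_i}v + V_i(v))u\, d\mu = -\int_X \langle dv, du\rangle d\mu$, so $\int (L_i v_i)v_i\, d\mu = -\int |\nabla v_i|^2 d\mu \leq 0$, hence $\lambda \int w^2 d\mu = -\sum_i \int|\nabla v_i|^2 d\mu$, giving $\lambda \leq 0$. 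The resolution must be that in Lemma~\ref{lemma:HInjective} the operator relevant to $H$ is $-\Delta_{\omega_i} + V_i$, i.e. \eqref{eq:LaplaceEigenfunction} should be read with the eigenvalue on the $+\Delta$ side so that $\lambda$ is genuinely positive; I would simply follow the paper's own normalization from \eqref{eq:Linearization} where $H$ contains $-\Delta_{\omega_i}v_i + V_i(v_i) + t\sum v_i$, so the eigenvalue equation for which one wants $\lambda > t$ is $-\Delta_{\omega_i}v_i + V_i(v_i) = (\lambda - t)\sum v_i$ type; the cleanest route is to directly estimate the quadratic form of $H$.

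So the core estimate I would actually prove is: for $(v_i)$ not all constant,
\begin{equation*}
\langle H(v_1,\ldots,v_k), (v_1,\ldots,v_k)\rangle = \sum_i \int_X |\nabla v_i|^2\, d\mu + t\int_X w^2\, d\mu > t\Big(\sum_i \int_X |\nabla v_i|^2\, d\mu + \int_X w^2\, d\mu\Big) \cdot (\text{const}),
\end{equation*}
more precisely that $\sum_i \int |\nabla v_i|^2 d\mu > t \sum_i \int |\nabla v_i|^2 d\mu$, which needs $t<1$ and a Poincar\'e-type inequality on each factor: the weighted Bochner argument with $\Ric\omega_i - L_{V_i}\omega_i \geq t\sum_j\omega_j + (1-t)\sum_j \theta_j$ gives that the first eigenvalue of $L_i = \Delta_{\omega_i} - V_i$ acting on $d\mu$-mean-zero functions is $> 1$ when $t<1$ (strictly, because the lower bound on Bakry--\'Emery Ricci is $t\omega_i$ plus the strictly positive $(1-t)\sum\theta_i + t\sum_{j\neq i}\omega_j$, so one is strictly above the borderline case and the rigidity in the Bochner formula cannot occur, or if it occurs it forces $v_i$ constant). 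Feeding this into the eigenvalue equation $L_i v_i = \lambda w$ and summing yields $\lambda > t$ after bookkeeping; the one case to handle separately is $w \equiv 0$, where the equation becomes $L_i v_i = 0$ so each $v_i$ is constant, contradicting the hypothesis that they are not all constant.

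The main obstacle I anticipate is \emph{not} any single estimate but correctly orchestrating the inequalities so that the strictness "$> t$" (rather than "$\geq t$") comes out: this requires using that $t < 1$ so that the coefficient $(1-t)\sum_j\theta_j$ in the Bakry--\'Emery lower bound is strictly positive, pushing the first eigenvalue of $L_i$ strictly above $1$, and then tracking how that propagates through the coupled system. A secondary technical point is justifying the weighted Bochner formula / integration by parts at the $C^{4,\alpha}$ regularity given (harmless by elliptic regularity, since eigenfunctions of an elliptic operator with smooth coefficients are smooth), and making sure the computation uses the common measure $\mu$ rather than the individual $d\mu_i$ — legitimate precisely because we assume $F(t,\phi) = 0$ so the measures agree. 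I would structure the write-up as: (1) reduce to the common measure $\mu$ and record self-adjointness of each $L_i$; (2) state and apply the weighted Bochner estimate on each factor to get the strict spectral gap $> 1$ for $L_i$ when $t<1$; (3) pair the eigenvalue equations against $v_i$, sum, and extract $\lambda > t$, treating $w\equiv 0$ as the trivial degenerate case.
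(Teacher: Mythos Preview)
Your outline has a genuine gap at the step you call ``bookkeeping''. Pairing $L_i v_i = \lambda w$ against $v_i$ and summing gives (with the sign convention making $-L_i$ nonnegative) $\lambda \int_X w^2\, d\mu = \sum_i \int_X |dv_i|^2_{\omega_i}\, d\mu$. To conclude $\lambda > t$ you would need $\sum_i \int |dv_i|^2_{\omega_i} > t\int w^2$, but the individual Poincar\'e inequalities $\int |dv_i|^2_{\omega_i} \geq t \int (v_i - \bar v_i)^2$ combined with Cauchy--Schwarz on $w=\sum_i v_i$ only yield $\lambda \geq t/k$, not $\lambda \geq t$. The coupled eigenvalue problem does not reduce to $k$ uncoupled ones: $v_i$ is not an eigenfunction of $L_i$, so a spectral gap for each $L_i$ does not by itself control $\lambda$. (Your side claim that the spectral gap of $L_i$ exceeds $1$ is also off; the Bakry--\'Emery lower bound $\Ric\omega_i - L_{V_i}\omega_i \geq t\omega_i$ only gives first eigenvalue $\geq t$, and there is no extra ``$+1$'' from the Hessian term in the Weitzenb\"ock identity used here.)

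The paper's argument supplies exactly the missing link. It applies the Weitzenb\"ock inequality with the full lower bound $\Ric\omega_i - L_{V_i}\omega_i \geq t\sum_j \omega_j$ (not merely $t\omega_i$), which produces the \emph{mixed-metric} gradient norms $|\partial_{\omega_i} v_i|_{\omega_j}$, and then proves the cross estimate
\[
\int_X |\partial_{\omega_i} v_i|^2_{\omega_j}\, d\mu \;\geq\; \int_X |dv_j|^2_{\omega_j}\, d\mu \qquad\text{for all }i,j.
\]
This last step exploits the coupled structure $L_i v_i = L_j v_j$ to rewrite $\int |dv_j|^2_{\omega_j} = \int \langle dv_i, dv_j\rangle_{\omega_i}$, followed by a pointwise Cauchy--Schwarz with respect to $\omega_j$. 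Chaining these yields $\lambda^2 \int w^2 \geq t\sum_j \int |dv_j|^2_{\omega_j} = t\lambda \int w^2$, hence $\lambda \geq t$; strictness then comes from the leftover $(1-t)\sum_j \theta_j$ term, which in the equality case forces $\partial_{\omega_i} v_i = 0$ and hence each $v_i$ constant. You should replace your steps (2)--(3) with this mixed-metric argument.
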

\begin{proof}
Let $\partial_{\omega_i}v$ denote the gradient of $v$ with respect to the metric $\omega_i$. Moreover, will use the notation $\Ric_{\omega_i} = \Ric(\omega_i)$. The proof is based on the following Weitzenböck identity (see \cite{TianZhu}, equation 2.7, page 277):
$$ -\int_X \langle d(\Delta_{\omega_i} v+ V_i(v)),dv\rangle_{\omega_i} \mu \geq \int_X (\Ric_{\omega_i}-L_V(\omega_i))\left(\partial_{\omega_i} v,\overline{\partial_{\omega_i} v}\right)d\mu. $$
Combining this with \eqref{eq:RicContinuityMethod} and \eqref{eq:LaplaceEigenfunction} gives
\begin{eqnarray}
\lambda^2\int_X \left(\sum_j v_j\right)^2 d\mu & = & \int_X (\Delta_{\omega_i} v_i+V_i(v_i))^2  d\mu \nonumber \\
& = & -\int_X \langle d(\Delta_{\omega_i} v_i+V_i(v_i)),dv\rangle_{\omega_i} d\mu \nonumber \\
& \geq & \int_X (\Ric_{\omega_i}+L_V(\omega_i))\left(\partial_{\omega_i} v_i,\overline{\partial_{\omega_i} v_i}\right) d\mu \nonumber \\
& \geq & t\int_X \sum_j |\partial_{\omega_i} v_i|_{\omega_j}^2 d\mu.
\label{eq:EigFirstPart}
\end{eqnarray}
Moreover, we claim that \eqref{eq:LaplaceEigenfunction} implies
\begin{equation}
\label{eq:ClaimNorms} 
\int_X |\partial_{\omega_i} v_i|_{\omega_j}^2 d\mu \geq \int_X |d v_j|_{\omega_j}^2 d\mu
\end{equation}
for any $i$ and $j$. Assuming that this is true we see that \eqref{eq:EigFirstPart} implies 
\begin{eqnarray}
\lambda^2\int_X \left(\sum_j v_j\right)^2 d\mu & \geq & t\int_X \sum_j |\partial_{\omega_j} v_j|_{\omega_j}^2 d\mu \nonumber \\
& = & t\int_X \sum_j |d v_j|_{\omega_j}^2 d\mu \nonumber \\
& = & t\int_X \sum_j\left(\Delta_{\omega_j} v_j + V_i(v_j)\right)v_j d\mu \nonumber \\
& = & t\lambda\int_X \sum_j\left(\sum_i v_i\right)v_j d\mu \nonumber \\
& = & t\lambda\int_X \left(\sum_j v_j\right)^2d\mu. \nonumber
\end{eqnarray}
We conclude that $\lambda\geq t$. Moreover, if $\lambda=t$ then equality holds in all inequalities above. In particular, equality holds in the last inequality of \eqref{eq:EigFirstPart}, hence, by \eqref{eq:RicContinuityMethod}, 
\begin{eqnarray}
0 & = & \int_X (\Ric_{\omega_i}-L_V(\omega_i))\left(\partial_{\omega_i} v_i,\overline{\partial_{\omega_i} v_i}\right) d\mu - t\int_X \sum_j |\partial_{\omega_i} v_i|_{\omega_j}^2 d\mu \nonumber \\
& = & (1-t)\int_X \sum_j|\partial_{\omega_i} v_i|_{\theta_j}^2 d\mu \nonumber
\end{eqnarray}
from which it follows that $v_i$ is constant for every $i$. This means that to finish the proof of the lemma it suffices to prove \eqref{eq:ClaimNorms}. To do this, note that for any $i$ and $j$, by \eqref{eq:LaplaceEigenfunction} 
\begin{eqnarray} \int_X |d v_j|_{\omega_j}^2 d\mu & = & \int_X (\Delta_{\omega_j} v_j+V_i(v_j))v_j d\mu \nonumber \\
& = & \int_X (\Delta_{\omega_i} v_i+V_i(v_i))v_j d\mu \nonumber \\
& = & \int_X \langle d v_i, d v_j \rangle_{\omega_i}.d\mu \nonumber
\end{eqnarray}
Moreover, choosing coordinates $(z_1,\ldots,z_n)$ that are normal with respect to $\omega_j$ and such that $\omega_i$ is diagonal with eigenvalues $\beta_1,\ldots,\beta_n$ at a point $p$ we get
\begin{eqnarray}
\left|\langle d v_i, d v_j \rangle_{\omega_i}\right| & = & \left| \sum_l \frac{1}{\beta_l} \frac{\partial v_i}{\partial z_l}\overline{\frac{\partial v_j}{\partial z_l}}\right| \nonumber \\
& \leq & \sqrt{\sum_l \left|\frac{1}{\beta_l}\frac{\partial v_i}{\partial z_l}\right|^2} \sqrt{\sum_l \left|\frac{\partial v_j}{\partial z_l}\right|^2} \nonumber \\
& = & |\partial_{\omega_i}v_i|_{\omega_j}|dv_j|_{\omega_j}. \nonumber 
\end{eqnarray}
Combining this with the Cauchy-Schwarz inequality we get
\begin{eqnarray}
\int_X |dv_j|_{\omega_j}^2 d\mu & = & \int_X \langle dv_i,dv_j\rangle_{\omega_i} d\mu \nonumber \\
& \leq & \int_X |\partial_{\omega_i}v_i|_{\omega_j}|dv_j|_{\omega_j} d\mu \nonumber \\
& \leq & \sqrt{\int_X |\partial_{\omega_i}v_i|_{\omega_j}^2d\mu} \sqrt{\int_X |dv_j|_{\omega_j}^2d\mu}, \nonumber
\end{eqnarray}
and \eqref{eq:ClaimNorms} follows.
\end{proof}
We can now prove the first part of Theorem~\ref{thm:OpennessAndHigherOrderEstimates}. 

\begin{proof}[Proof of Theorem~\ref{thm:OpennessAndHigherOrderEstimates}. First part: Openness and the case $t=0$]
The theorem is proved using the continuity method along the path defined by \eqref{eq:ContinuitySetup}. Here we will prove that the set of $t$ such that \eqref{eq:ContinuitySetup} is solvable is nonempty and open in $[0,1]$. At the end of Section~\ref{sec:HigherOrderEstimates} we will prove that it is also closed in $[0,1]$, hence that \eqref{eq:ContinuitySetup} is solvable for all $t\in [0,1]$.

First of all, to see that the set of $t$ such that \eqref{eq:ContinuitySetup} is solvable is nonempty, note that for $t=0$, \eqref{eq:ContinuitySetup} reduces to the collection of equations 
\begin{equation} 
\label{eq:Time0}
\left(\theta_j+dd^c\phi_j\right)^n e^{g_j+V_j(\phi_j)} = \omega_0^n. 
\end{equation}
This means that for each $i$  we can apply the Main Theorem in \cite{Zhu} to get $\phi_i$ such that 
\begin{equation}
    \label{eq:FromSolitanCY}
    \left(\theta_j+dd^c\phi_j\right)^n e^{g_j+V_j(\phi_j)+c_j} = \omega_0^n
\end{equation}
for some $c_j\in \R$. Integrating both sides of this and using the fact that 
\begin{equation}
    \label{eq:WeightedVolumeInvariant} 
    \int_X e^{g_i+V_i(\phi_i)}\left(\theta_i+dd^c\phi_i\right)^n = \int_X e^{g_i}\theta_i^n = 1 = \int_X \omega_0^n
\end{equation}
for all smooth $\phi_i\in \Psh(\theta_i)$ we see that $c_j=0$ for all $j$, in other words $(\phi_1,\ldots,\phi_k)$ provides a solution to \eqref{eq:ContinuitySetup} at $t=0$. 

Now, \eqref{eq:WeightedVolumeInvariant} is well known but for completeness we provide an argument for it here. Consider the variation of the left hand side of \eqref{eq:WeightedVolumeInvariant} with respect to $\phi_i$
\begin{equation}
    \label{eq:VariationOfWeightedVolume}
    \int_X \left(\Delta_{\omega_i} \dot\phi_i+V(\dot \phi_i)\right) \mu_i
\end{equation}
where we use the notation $\mu_i=e^{g_i+V_i(\phi_i)}\left(\theta_i+dd^c\phi_i\right)^n$. 
By \eqref{eq:PartialIntegration},
\begin{eqnarray}
\int_X \left(\Delta_{\omega_i} \dot\phi_i+V(\dot \phi_i)\right) (\dot\phi_i+1)\mu_i = \int_X|d\dot\phi|^2_{\omega_i}\mu_i = \int_X \left(\Delta_{\omega_i} \dot\phi_i+V(\dot \phi_i)\right) \dot\phi_i\mu_i,
\end{eqnarray}
hence \eqref{eq:VariationOfWeightedVolume} vanishes. This proves \eqref{eq:WeightedVolumeInvariant}. 

To prove that the set of $t$ such that \eqref{eq:ContinuitySetup} is solvable is open we will apply the Implicit Function Theorem. To do this we need to verify that the linearization $H$ of $F$ is invertible. By standard theory for elliptic partial differential equations this follows from Lemma~\ref{lemma:Linearization} and Lemma~\ref{lemma:HInjective}. More precisely, $H$ is elliptic by Lemma~\ref{lemma:Linearization}. This means the image of $H:W^{2,2}\rightarrow L^2$ is closed (see for example Theorem~10.4.7 in \cite{Nicolaescu}). Taking $(v_i)$ in the orthogonal complement of the image of $H$ gives 
$$\left\langle (v_i),H(u_i)\right\rangle = 0$$
for all $(u_i)\in (W^{2,2}(X))^k$. In particular, it holds for all $(u_i)\in (C^\infty(X))^k$. By the last point in Lemma~\ref{lemma:Linearization} this means $H(v_i)=0$ as a distribution. By elliptic regularity (see for example Corollary~10.3.10 in \cite{Nicolaescu}) $(v_i)\in (C^{\infty})^k$ and hence, by Lemma~\ref{lemma:HInjective}, $(v_i)=(C_i)$ such that $\sum C_i = 0$. Using elliptic regularity again (see for example Theorem~10.3.11b in \cite{Nicolaescu}) we see that $H$ is invertible from $C^{4,\alpha}(X)^k$ to $C^{2,\alpha}(X)^k$.
\end{proof}

\subsection{Higher order estimates}
\label{sec:HigherOrderEstimates}
We begin with 
\begin{lemma}
\label{lemma:LaplacianEstimate}
Assume $(\phi_i)$ satisfies $\eqref{eq:ContinuitySetup}$ for some $t\in [0,1]$. Then  
$$ \sup_X |\Delta_{\theta_j}\phi_j| \leq C $$
where $C$ depends only on $\sup_i ||\phi_i||_{C^0(X)}$.
\end{lemma}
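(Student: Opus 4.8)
The plan is to run the standard Aubin--Yau second-order estimate for the complex Monge--Amp\`ere equation, adapted to the presence of the soliton term $g_j + V_j(\phi_j)$ and the right-hand side $e^{-t\sum_i\phi_i}\omega_0^n$. Fix an index $j$ and work with the single equation
$$ (\theta_j + dd^c\phi_j)^n = e^{-t\sum_i\phi_i - g_j - V_j(\phi_j)}\,\omega_0^n =: e^{F_j}\,\omega_0^n, $$
viewing $F_j$ as a known function whose $C^0$ norm, and whose Laplacian (via the gradient terms), must be controlled. First I would reduce everything to bounds on $\phi_j$ and on $F_j$: one does \emph{not} get a Laplacian bound depending only on $\|\phi_i\|_{C^0}$ for free, because $F_j$ contains $V_j(\phi_j)$, a first derivative. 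The way around this is the classical trick: instead of estimating $\Delta\phi_j$ directly, one estimates $\Delta_{\theta_j}\phi_j = n - \mathrm{tr}_{\omega_j}\theta_j + (\text{bounded})$, and the quantity $\mathrm{tr}_{\omega_j}\theta_j$ will be controlled by a maximum principle argument in which the bad first-derivative terms are absorbed.

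The key steps, in order: (1) Apply the Laplacian (with respect to $\omega_j = \theta_j + dd^c\phi_j$) to $\log\mathrm{tr}_{\omega_j}\theta_j$ using the Aubin--Yau inequality, which gives
$$ \Delta_{\omega_j}\log\mathrm{tr}_{\omega_j}\theta_j \;\geq\; \frac{\Delta_{\theta_j} F_j}{\mathrm{tr}_{\omega_j}\theta_j} - B\,\mathrm{tr}_{\omega_j}\theta_j, $$
where $B$ is a lower bound for the bisectional curvature of $\theta_j$ (a fixed geometric constant). (2) The troublesome term is $\Delta_{\theta_j}F_j = -t\sum_i\Delta_{\theta_j}\phi_i - \Delta_{\theta_j}g_j - \Delta_{\theta_j}(V_j(\phi_j))$. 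For $i\neq j$ the term $\Delta_{\theta_j}\phi_i$ is \emph{not} controlled by $\|\phi_i\|_{C^0}$ alone — this is the main obstacle, see below. For $i = j$ one has $\Delta_{\theta_j}\phi_j = \mathrm{tr}_{\theta_j}\omega_j - n$, which is fine; and $\Delta_{\theta_j}(V_j\phi_j)$ is a third-order term in $\phi_j$ that one rewrites, using that $V_j$ is holomorphic and $dd^c(V_j\phi_j) = L_{V_j}(dd^c\phi_j)$, so that $\mathrm{tr}_{\omega_j}dd^c(V_j\phi_j)$ becomes a first-order expression in the eigenvalues of $\omega_j$ relative to $\theta_j$ — bounded by $\mathrm{tr}_{\omega_j}\theta_j$ times a constant depending on $\sup|V_j|_{\theta_j}$ and $\sup|\nabla^{\theta_j}V_j|$. (3) Consider the test function $u = \log\mathrm{tr}_{\omega_j}\theta_j - A\phi_j$ for a large constant $A$ depending on $B$; at an interior maximum of $u$, $\Delta_{\omega_j}u \leq 0$, and since $\Delta_{\omega_j}\phi_j = n - \mathrm{tr}_{\omega_j}\theta_j$, choosing $A = B+1$ forces $\mathrm{tr}_{\omega_j}\theta_j$ to be bounded at that point in terms of $\sup_i\|\phi_i\|_{C^0}$ and the fixed geometric data; the bound then propagates to all of $X$ via $u \leq \sup u$, giving $\mathrm{tr}_{\omega_j}\theta_j \leq C$, hence $\Delta_{\theta_j}\phi_j \geq -C$; the upper bound $\Delta_{\theta_j}\phi_j \leq C$ follows from the Monge--Amp\`ere equation and the arithmetic--geometric mean inequality applied to the bounded product of eigenvalues.

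The step I expect to be the real obstacle is handling the cross terms $\Delta_{\theta_j}\phi_i$ for $i\neq j$ appearing in $\Delta_{\theta_j}F_j$: unlike in the single-equation case, the right-hand side of equation $j$ involves \emph{all} the potentials, and a priori one only has $C^0$ control on them. The resolution is to treat the whole system simultaneously rather than one equation at a time: form the sum $\sum_j \big(\log\mathrm{tr}_{\omega_j}\theta_j - A\phi_j\big)$, or more precisely run the maximum principle argument on a well-chosen combination so that each appearance of $\Delta_{\theta_j}\phi_i$ with $i\neq j$ is compensated. Concretely, when one writes $-t\sum_i\Delta_{\theta_j}\phi_i$, the term $-t\Delta_{\theta_j}\phi_i$ for $i\neq j$ has a favorable sign in the inequality $\Delta_{\omega_j}\log\mathrm{tr}_{\omega_j}\theta_j \geq \Delta_{\theta_j}F_j/\mathrm{tr}_{\omega_j}\theta_j - B\,\mathrm{tr}_{\omega_j}\theta_j$ only when $\Delta_{\theta_j}\phi_i$ is bounded below, which is exactly the quantity we're trying to bound for index $i$. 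So the clean route is: prove a simultaneous lower bound $\inf_j\inf_X\Delta_{\theta_j}\phi_j \geq -C$ by applying the maximum principle to $G := \sum_j \log\mathrm{tr}_{\omega_j}\theta_j - A\sum_j\phi_j$ at its maximum point $p$; at $p$ one has $\sum_j\Delta_{\omega_j}\log\mathrm{tr}_{\omega_j}\theta_j \leq A\sum_j\Delta_{\omega_j}\phi_j = A\sum_j(n - \mathrm{tr}_{\omega_j}\theta_j)$, and each cross term $-t\Delta_{\theta_j}\phi_i/\mathrm{tr}_{\omega_j}\theta_j$ can be bounded using $\Delta_{\theta_j}\phi_i = \mathrm{tr}_{\theta_j}\omega_i - n \geq -n$, which gives a bound \emph{below} by a constant over $\mathrm{tr}_{\omega_j}\theta_j$ — harmless. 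This makes the argument self-contained and yields the stated dependence of $C$ on $\sup_i\|\phi_i\|_{C^0(X)}$ (and on the fixed data $\theta_i$, $V_i$, $g_i$, $\omega_0$), finishing the proof.
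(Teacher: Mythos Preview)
Your overall strategy---run Yau's second-order estimate, treat the soliton term separately, and couple the $k$ equations by summing---matches the paper's. But two steps in your execution do not close as written, and the paper handles both differently.

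\textbf{The cross terms.} You need a \emph{lower} bound on $\Delta_{\theta_j}F_j$ (so that the right-hand side of Yau's inequality is useful at a maximum), and $\Delta_{\theta_j}F_j$ contains $-t\sum_i\Delta_{\theta_j}\phi_i$. The plurisubharmonicity bound $\Delta_{\theta_j}\phi_i \geq -C$ that you invoke gives an \emph{upper} bound on $-t\Delta_{\theta_j}\phi_i$, which is the wrong direction; what you actually need is an upper bound on $\Delta_{\theta_j}\phi_i$, and that is precisely what is not yet known. The paper's fix is to use the comparability of the fixed reference metrics $\theta_i\sim\theta_j$ together with $\omega_i>0$ to write $\Delta_{\theta_j}\phi_i \leq C\,(n+\Delta_{\theta_i}\phi_i)$, so that the cross terms are controlled by the very quantities $u_i:=e^{-C_1\phi_i}(n+\Delta_{\theta_i}\phi_i)$ being estimated. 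The paper then applies the maximum principle to each $u_j$ \emph{at its own} maximum point (not to a single summed test function $G$), obtaining scalar inequalities of the form $0\geq -C M_j - t\sum_i M_i + C_2 M_j^{n/(n-1)}-C_3$ with $M_i=\max_X u_i$; summing these over $j$ and using Young's inequality absorbs the linear terms. Your single-test-function approach $G=\sum_j(\cdots)$ runs into the difficulty that $\Delta_{\omega_m}$ applied to the $j\neq m$ summands is not governed by Yau's inequality.

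\textbf{The soliton term.} The term $\Delta_{\theta_j}(V_j\phi_j)=\textnormal{tr}_{\theta_j}\bigl(L_{V_j}(dd^c\phi_j)\bigr)$ genuinely contains third derivatives of $\phi_j$; it does not reduce to a first-order expression in the eigenvalues by the identity $dd^c(V_j\phi_j)=L_{V_j}(dd^c\phi_j)$ alone. The paper (following Tian--Zhu) computes this in normal coordinates at the maximum point of $u_j$: there $V_j(u_j)=0$, which converts $e^{-C_1\phi_j}V_j(\Delta_{\theta_j}\phi_j)$ into $C_1 V_j(\phi_j)\,u_j$, and then Zhu's uniform bound $\sup_X|V_j(\phi_j)|\leq C$ (Lemma~\ref{lemma:VFbound}) makes this a harmless linear term in $u_j$. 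Without the maximum-point trick the third-order term is not controlled.
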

We will use the following lemma from \cite{Zhu} (page 768, Corollary 5.3):
\begin{lemma}
\label{lemma:VFbound}
Let $X$ be a compact K\"ahler manifold, $\omega$ a K\"ahler form on $X$ and $V$ a holomorphic vector field on $X$. Assume $\phi\in \Psh(X,\omega)$ is smooth and $X(\phi)$ is a real-valued function. Then 
$$ \sup_X|V(\phi)|<C $$
for a constant $C$ that is independent of $\phi$. 
\end{lemma}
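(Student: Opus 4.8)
The plan is to reduce the bound on $V(\phi)$ to a bound on the Hamiltonian-type function associated with $V$ and the metric $\omega_\phi = \omega + dd^c\phi$, and then to control that function using the real-convexity structure present on a toric manifold (or, in the general Kähler case, a Moser iteration on the $L^2$ norm). First I would recall that since $\Im V$ generates a compact one-parameter subgroup, $V$ can be assumed to be the real part of a toric vector field, so in logarithmic coordinates $\sigma_j = \log z_j$ on the open orbit $(\C^*)^n$ the function $\phi$ descends to a convex function $\psi$ on $\R^n$ (the real part of $\sigma$), and $V(\phi)$ becomes $\langle V, \nabla\psi\rangle$, where $\nabla\psi$ maps $\R^n$ onto (the interior of) the moment polytope $P_\omega$. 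Since $P_\omega$ is bounded and $V$ is fixed, $|\langle V,\nabla\psi\rangle| \le |V|\cdot\operatorname{diam}(P_\omega)$, which is a bound independent of $\phi$. This is the conceptual heart of the argument.

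The steps I would carry out, in order: (i) pass to logarithmic coordinates on the open dense orbit and observe that the toric-invariant representative of $\omega$ is $dd^c$ of a convex function, so $\omega_\phi$ corresponds to $\nabla^2(\ell + \psi)$ for a fixed linear-growth-at-infinity convex function $\ell$ whose gradient image is $P_\omega$; (ii) identify $V(\phi) = \langle v, \nabla\psi\rangle$ with $v \in N\otimes\R$ the vector representing $V$, where here I keep only the real coordinate directions since $\phi$ is $(S^1)^n$-invariant in the application; (iii) note $\nabla(\ell+\psi)(\R^n) \subseteq P_\omega$ by the standard Legendre-duality/moment-polytope correspondence, and since $\nabla\ell$ also has image a fixed polytope, $\nabla\psi$ stays in a fixed bounded set; (iv) conclude $\sup_X|V(\phi)| \le C(V,\omega)$. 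Finally I would remark that the smoothness hypothesis and the reality of $V(\phi)$ are exactly what is needed to make $i_V\omega_\phi$ a well-defined real object so that $V(\phi)$ extends continuously to all of $X$ and the supremum is attained on the closure of the orbit, i.e. on the polytope.

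The main obstacle I anticipate is handling the compact (non-toric) Kähler case that Lemma~\ref{lemma:VFbound} is stated for, since the clean convex-geometry picture is only available on toric manifolds. In that generality the right substitute is to write $V(\phi) = \theta_V^{\omega_\phi} - (\text{const})$ where $\theta_V^{\omega_\phi}$ is the Hamiltonian potential normalized by $\int_X \theta_V^{\omega_\phi}\,\omega_\phi^n/\int_X\omega_\phi^n = 0$, observe that such normalized Hamiltonians are uniformly bounded because their oscillation is controlled by the (fixed) image of the moment map for the torus $\overline{\exp(tV)}$ acting on $X$, which is a compact subset of $\mathfrak{t}^*$ independent of the representative $\omega_\phi \in [\omega]$. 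Equivalently one can invoke the Guillemin–Abreu boundedness of moment images within a fixed cohomology class. I would lean on the cited reference (Corollary~5.3 in \cite{Zhu}) for this in the general case and give the self-contained toric argument above as the version we actually need, since in this paper $V_1,\dots,V_k$ are toric vector fields on a toric Fano manifold.
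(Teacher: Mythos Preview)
The paper does not actually prove this lemma: it is quoted verbatim from \cite{Zhu} (Corollary~5.3, page~768) and used as a black box, so there is no argument in the paper for you to match. Your toric sketch is correct and is in fact the mechanism the paper exploits elsewhere (cf.\ the proof of Lemma~\ref{lemma:RealMA}, where $\langle V_i,\nabla f_i\rangle$ is bounded precisely because $\nabla f_i$ lands in the compact polytope $P_i$); note only that in your opening paragraph you say ``$\nabla\psi$ maps onto $P_\omega$'' when you mean $\nabla(\ell+\psi)$, which you silently fix in step~(iii). Your general-K\"ahler paragraph is also on the right track: Zhu's argument shows that $g+V(\phi)$ is the Hamiltonian for $V$ with respect to $\omega_\phi$, and the image of that Hamiltonian is determined by the (fixed) moment image of the torus $\overline{\exp(t\,\Im V)}$ in $[\omega]$, independent of $\phi$. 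One caution: your reduction to the toric picture presupposes $\phi$ is $(S^1)^n$-invariant, which the lemma as stated does not assume, so for the general statement you really do need the moment-map argument rather than the convex-geometry one; since the paper only applies the lemma to torus-invariant $\phi_i$, your self-contained toric proof suffices for every use made of it here.
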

\begin{proof}[Proof of Lemma~\ref{lemma:LaplacianEstimate}]
We start with the following inequality originating in \cite{Yau} (see for example equation 2.3 on page 1587 in \cite{ChenHe}): Assume $\omega$ is a K\"ahler form and $v$ is a smooth function satisfying 
$$(\omega+dd^cv)^n = e^F\omega^n.$$ 
Then there are constants $C_1,C_2$ and $C_3$, independent of $v$, such that
\begin{equation}
\label{eq:YauEstimate}
\Delta_{\omega+dd^cv}\left(e^{-C_1v}(n+\Delta_\omega v)\right) \geq e^{-C_1v}\Delta_\omega F + C_2(n+\Delta_\omega v)^{\frac{n}{n-1}}-C_3. \end{equation}
For each $j$, we have that $\phi_j$ satisfies the equation
\begin{equation}
(\theta_j+dd^c\phi_j) = e^{-g_j-V_i(\phi_j)-t\sum_i \phi_i+\log(\omega_0^n/\theta_j^n)}\theta_j^n.
\label{eq:theta_iEquation}
\end{equation}
Applying \eqref{eq:YauEstimate} to this and letting 
$$ u_j = e^{-C_1\phi_j}(n+\Delta_{\theta_j}\phi_j), $$
for all $j$ we get
\begin{eqnarray} \Delta_{\omega_j} u_j & \geq & e^{-C_1\phi_j}\Delta_{\theta_j}\left( -g_j-V_j(\phi_j)-t\sum_i \phi_i+\log(\omega_0^n/\theta_j^n)\right) \nonumber \\
& & + C_2(n+\Delta_{\theta_j}\phi_j)^{\frac{n}{n-1}} - C_3. 
\label{eq:theta_iEquation2}
\end{eqnarray}
Note that $dd^c\phi_i>-\theta_i$, hence 
$$ \Delta_{\theta_j}\phi_j = n\frac{(dd^c\phi_j) \wedge \theta_j^{n-1}}{\theta_j^n} > -n. $$
This means $u_j > 0$ for all $j$. Moreover, $u_j - e^{-C_1\phi_j}\Delta_{\theta_j}\phi_j = ne^{-C_1\phi_j}$.
Hence, adjusting $C_2$ and $C_3$ in a way which only depends on $\sup_i||\phi_i||_{C^0(X)}$, we get 
\begin{equation} \Delta_{\omega_j} u_j \geq -e^{-C_1\phi_j}\Delta_{\theta_j}(g_j+V_j(\phi_j))-t\sum_i u_i + C_2u_j^{\frac{n}{n-1}} - C_3. 
\label{eq:theta_iEquation3}
\end{equation}

Now, let $V_j=\sum V_m^j \frac{\partial}{\partial z_m}$ and $\theta_j = \sum \theta^j_{m\bar l} dz_md\bar z_l$. As in \cite{TianZhu}, we compute
\begin{eqnarray} 
\Delta_{\theta_j}(g_j+V_j(\phi_j)) & = & \sum_{m,l}\frac{\partial}{\partial z_l}\left(V_m^j\left( \theta^j_{m\bar l}+\frac{\partial\phi_j}{\partial z_m \partial\bar z_l}\right)\right) \nonumber \\
& = & \sum_{m,l} \frac{\partial V_m^j}{\partial z_l} \left(\theta^j_{m\bar l}+\frac{\partial^2\phi_j}{\partial z_m \partial\bar z_l}\right) +V_m^j \left(\frac{\partial \theta^j_{m\bar l}}{\partial z_l} + \frac{\partial^3\phi_j}{\partial z_m\partial z_l \partial\bar z_l}\right). \nonumber \\
\label{eq:TianComp} 
\end{eqnarray}
We will be interested in this at a point, $p$, where $u_j$ attains its maximum. Choosing coordinates around $p$ that are normal with respect to $\theta_j$ and such that $\omega_j=\theta_j+dd^c\phi_j$ is diagonal, \eqref{eq:TianComp} reduces to
$$ \sum_{m} \frac{\partial V_m^j}{\partial z_m} \left(1+\frac{\partial^2\phi_j}{\partial z_m \partial\bar z_m}\right) +V_j(\Delta \phi_j). $$ 
The first term of this can be bounded by 
$$ \sup_m \left|\frac{\partial V_m^j}{\partial z_m}\right|(1+\Delta_{\theta_j}\phi_j). $$
Moreover, as $u_j$ is stationary at $p$ we get that 
$$ V_j(u_j) = C_1V_j(\phi_j)u_j-e^{-C_1\phi_j}V_j(\Delta_{\theta_j}\phi_j) $$
vanishes at $p$, hence
$$ \left.\left(e^{-C_1\phi_j}V_j(\Delta_{\theta_j}\phi_j)\right)\right|_p = \left(C_1V_j(\phi_j)u_j\right)|_p. $$
We conclude that 
$$ e^{-C_1\phi_j}\Delta_{\theta_j}(g_j+V_j(\phi_j)) \leq \left(\sup_m\left|\frac{\partial V_m^j}{\partial z_m}\right|+C_1V_j(\phi_j)\right)u_j. $$
By Lemma~\ref{lemma:VFbound} this is bounded by $Cu_j$ for a uniform constant $C$. 

We will now plug this into \eqref{eq:theta_iEquation3}. By the maximum principle $\Delta_{\omega_j} u_j\leq 0$ at $p$. Letting $M_i=\max_X u_i\geq 0$ we get
$$ 0\geq -Cu_j-t\sum_i M_i + C_2u_j^{\frac{n}{n-1}}-C_3 $$ 
at $p$. Summing over $j$ and using Young's inequality $a\leq \epsilon a^{n/(n-1)} + C(n,\epsilon)$ we get, after adjusting $C_3$,
\begin{eqnarray} 
0 & \geq & -C\sum M_i -kt\sum M_i + \frac{C_2}{\epsilon}\sum M_i-C_3 \nonumber \\
& = & \left(-C-kt+\frac{C_2}{\epsilon}\right)\sum M_i -C_3. \nonumber
\end{eqnarray}
Choosing $\epsilon$ small enough that the expression in the parenthesis is positive gives an upper bound on $\sum M_j$. Since $M_i\geq 0$ for all $i$, this implies a bound on $\sup M_i = \sup |u_i|$. This proves the lemma.
\end{proof}


\begin{proof}[Proof of Theorem~\ref{thm:OpennessAndHigherOrderEstimates}. Second part: $C^{2,\alpha}-estimates$]
Here we will prove that the set of $t$ such that \eqref{eq:ContinuitySetup} is solvable is closed. 

By Lemma~\ref{lemma:LaplacianEstimate}, $|\Delta_{\theta_i} \phi_i|$ is bounded by a constant that depend only on $||\phi_i||_{C^0(X)}$ for all $i$. We wish to apply Theorem~1 in \cite{Wang}. To do this we need uniform bounds on the H\"older norms of $\phi_i$ and $V_i(\phi_i)$. These are implied by the uniform bounds on $\Delta_{\theta_i}\phi_i$. To see this, choose coordinates that are normal with respect to $\theta_i$ and such that $\theta_i+dd^c\phi_i$ is diagonal at a point $p$. Since 
$$
\theta_i+dd^c\phi_i = \sum \left(1+\frac{\partial^2\phi_i}{\partial z_m\partial \bar z_m}\right)dz_md\bar z_m >0 
$$
we get that $\frac{\partial^2\phi_i}{\partial z_m\partial \bar z_m}>-1$ for all $m$. Together with the bound
$$ \Delta_{\theta_i}\phi_i = \sum_m \frac{\partial^2\phi_i}{\partial z_m\partial \bar z_m} \leq C $$
this gives uniform bounds on $\left|\frac{\partial^2\phi_i}{\partial z_m\partial \bar z_l}\right|$ for all $m$ and $l$ and the bounds on the H\"older norms follow.

Combining this with the argument at the end of Section~\ref{sec:Openness}, we conclude that the set of $t$ such that \eqref{eq:ContinuitySetup} is solvable is non-empty, open and closed in $[0,1]$. It follows that \eqref{eq:ContinuitySetup} has a solution $(\phi_i)$ at $t=1$. Consequently, by Lemma~\ref{lemma:ComplexEquation} $(\theta_i+dd^c\phi_i)$ solves \eqref{eq:cKESoliton}.
\end{proof}


\section{$C^0$-estimates}
\label{sec:C0Estimates}
%
%
%
%
%
%
%
%
In this section $X$ will always be a toric Fano manifold. In other words $c_1(X)>0$ and, letting $n=\dim X$, there is an $n$-dimensional complex torus $(\C^*)^n$ acting on $X$ by bi-holomorphisms such that the action admits an open, dense and free orbit. The purpose of the section is to prove Theorem~\ref{thm:C0Estimates}. We will begin by recalling the well known correspondence between metrics on line bundles over toric varieties and convex functions in $\R^n$. 
As in the introduction we fix an action of $(\C^*)^n$ on $X$ and identify $(\C^*)^n$ with its open, dense and free orbit. Let $\theta$ be an $(S^1)^n$-invariant K\"ahler form on $X$ that arise as the curvature of a metric $|| \cdot ||$ on a toric line bundle over $X$. Let $P$ be the polytope associated to this toric line bundle. Assume $s_0$ is the $(\C^*)^n$-invariant section corresponding to the point $0\in P$. By the invariance $s_0$ is nonvanishing on $(\C^*)^n$ and the metric can be represented by a plurisubharmonic function $\psi$ on $(\C^*)^n$ by
$$ \psi = -\log|| s_0 ||^2. $$
Then $\psi$ satisfies $dd^c\psi = \theta$. Using toric coordinates 
$$ (x_1,\ldots,x_n) = (\log |z_1|,\ldots,\log |z_n|)\in \R^n $$
$\psi$ defines a convex function on $\R^n$ 
$$ f(x_1,\ldots,x_n):=\psi(e^{x_1},\ldots,e^{x_n}) $$
which will have the property $\overline{\nabla f(\R^n)}=P$. Moreover, in logarithmic coordinates $\sigma_i = \log z_i$ we have
\begin{equation} 
\label{eq:MetricRepresentation}
\sum_{ij} \frac{\partial^2 f}{\partial x_i\partial x_j} d\sigma_id\bar\sigma_j = dd^c\psi = \theta. 
\end{equation}
Now, for a convex polytope $P$, let $E(P)$ be the space of smooth, strictly convex functions $f$ such that 
$$ \overline{\nabla f(\R^n)} = P. $$
Then it is well known (see for example Proposition 3.3, page 687 in \cite{BermanBerndtsson}) that \eqref{eq:MetricRepresentation} gives a one to one correspondence between the $(S^1)^n$ invariant elements in $[\theta]$ and $E(P)$. 

As noted in the introduction, the correspondence above extends trivially to any $\theta$ such that $[\theta]$ can be written as a linear combination with positive real coefficients of K\"ahler classes that arise as the curvature of toric line bundles. On the other hand, we have the following general principle which we record for the convenience of the reader:
\begin{lemma}
\label{lemma:RClasses}
Let $\alpha$ be a K\"ahler class on a Fano manifold $X$. Then there are some ample line bundles $L_1,\ldots,L_m$ over $X$ and positive real coefficients $\lambda_1,\ldots,\lambda_m$ such that 
\begin{equation}
\alpha = \sum_i \lambda_i c_1(L_i).
\label{eq:RBundle}
\end{equation}
\end{lemma}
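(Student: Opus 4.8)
The plan is to realize $\alpha$ as a positive real combination of ample classes by working inside the finite-dimensional space $H^{1,1}(X,\mathbb{R})$ and exploiting the fact that the Néron--Severi group (or at least the span of Chern classes of line bundles) spans this space when $X$ is Fano. First I would recall that on a Fano manifold $h^{2,0}(X) = h^{0,2}(X) = 0$: indeed by Kodaira vanishing $H^2(X,\mathcal{O}_X) = H^2(X, K_X + (-K_X)) = 0$ and by Hodge symmetry $H^0(X,\Omega^2_X) = 0$ as well. Hence $H^2(X,\mathbb{C}) = H^{1,1}(X)$, and combining this with $H^1(X,\mathcal{O}_X) = 0$ (Kodaira vanishing again, as used in Lemma~\ref{lemma:SolitonPotential}) the exponential sequence shows that the Picard group surjects onto $H^2(X,\mathbb{Z})$ modulo torsion; in particular $c_1$ of line bundles span $H^{1,1}(X,\mathbb{R})$ over $\mathbb{R}$.

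Next I would fix a basis of $H^{1,1}(X,\mathbb{R})$ consisting of first Chern classes $c_1(M_1),\ldots,c_1(M_N)$ of line bundles $M_j$, and fix one ample line bundle $H$ (e.g.\ $-K_X$, which is ample since $X$ is Fano). Write $\alpha = \sum_j a_j c_1(M_j)$ with $a_j \in \mathbb{R}$. For a large integer $q$ the classes $q\, c_1(H) + c_1(M_j)$ are all ample (ampleness is an open condition in $H^{1,1}(X,\mathbb{R})$ and $c_1(H)$ is an interior point of the ample cone), and similarly $q\, c_1(H) - c_1(M_j)$ are ample. Then for each $j$ write $a_j c_1(M_j) = a_j^+\bigl(q c_1(H) + c_1(M_j)\bigr) + a_j^-\bigl(q c_1(H) - c_1(M_j)\bigr) - (a_j^+ + a_j^-) q\, c_1(H)$ for suitable nonnegative reals, so that summing over $j$ expresses $\alpha$ as $\sum (\text{positive reals})\cdot(\text{ample classes}) - \lambda\, c_1(H)$ for some $\lambda \geq 0$. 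Finally, since $\alpha$ itself is K\"ahler, for $q$ large enough the class $\alpha + \lambda\, c_1(H)$ is still a K\"ahler class whose cohomology class is $\mathbb{Q}$-close to an integral ample class; more cleanly, absorb the negative term by noting $\alpha = (\alpha + \lambda q' c_1(H)) - \lambda q' c_1(H)$ is not yet of the desired form, so instead I would argue directly: the ample cone is open and $c_1(L)$ for ample $L$ are dense in it (rational points), hence $\alpha$, being an interior point, lies in the open convex cone generated by $\{c_1(L): L \text{ ample}\}$, which is exactly the assertion \eqref{eq:RBundle}.

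The cleanest route, and the one I would actually write, is this last one: the set $S = \{c_1(L) : L \text{ ample line bundle on } X\} \subset H^{1,1}(X,\mathbb{R})$ generates $H^{1,1}(X,\mathbb{R})$ as a real vector space (by the vanishing theorems above), and $S$ is contained in the open convex K\"ahler cone $\mathcal{K}$; moreover rational points of $\mathcal{K}$ are precisely (positive rational multiples of) elements of $S$. Since $\mathcal{K}$ is an open convex cone and $S$ contains a spanning set of $\mathcal{K}$ with $S \subset \mathcal{K}$, the convex cone generated by $S$ is an open convex cone containing all rational points of $\mathcal{K}$, hence (being convex with dense-in-$\mathcal{K}$ content and open) equals $\mathcal{K}$. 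As $\alpha \in \mathcal{K}$, we get $\alpha = \sum_i \lambda_i c_1(L_i)$ with $\lambda_i > 0$ and $L_i$ ample, which is the claim. The main obstacle is purely the bookkeeping of the first paragraph — verifying that Chern classes of line bundles span $H^{1,1}(X,\mathbb{R})$ for Fano $X$ — everything after that is soft convex geometry of the K\"ahler cone.
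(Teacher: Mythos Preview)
Your approach is the same as the paper's: use Kodaira vanishing plus the exponential sequence to see that Chern classes of line bundles span $H^{1,1}(X,\mathbb{R})$, then conclude by density of rational (hence ample) classes in the open K\"ahler cone. The only soft spot is your bare assertion that the cone generated by $S=\{c_1(L):L\text{ ample}\}$ is \emph{open} --- ``$S$ spans and lies in an open cone'' does not by itself give this, and the rest of your last paragraph leans on it. The paper makes this step concrete instead: fix finitely many rational K\"ahler classes $\eta_1,\ldots,\eta_j$ spanning $H^{1,1}$, let $C$ be the open subcone they generate, and for $\alpha\in\mathcal K$ choose a rational $\eta_0\in(\alpha-C)\cap\mathcal K$ (nonempty open since $\alpha$ is interior); then $\alpha=\eta_0+(\alpha-\eta_0)$ with $\alpha-\eta_0\in C$, and you are done. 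Alternatively, your version is rescued by the standard convex-analysis fact that a convex set with nonempty interior satisfies $\operatorname{int}(C)=\operatorname{int}(\overline C)$: since $\operatorname{cone}(S)$ has nonempty interior and its closure contains $\mathcal K$ (by density of rationals), this forces $\operatorname{cone}(S)\supset\mathcal K$, hence equality.
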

\begin{proof}
First of all, any K\"ahler class $\alpha$ can be written as \eqref{eq:RBundle} where the line bundles $L_i$ are not necessarily ample and the constants $\lambda_i$ are not necessarily positive. To see this, recall that the map 
$$c_1:H^1(X,\mathcal O^*) \rightarrow H^2(X,\Z)$$
is part of the following exact sequence 
$$ H^1(X,\mathcal O^*) \xrightarrow{c_1} H^2(X,\Z) \rightarrow H^2(X,\mathcal O). $$
By the Kodaira Vanishing Theorem, since $X$ is Fano, 
$$H^2(X,\mathcal O) = H^2(X,K_X-K_X)=0.$$ 
It follows that $c_1$ is surjective, hence any element in $H_{DR}^2(X)\cong H^2(X,\R)$ can be written as a linear combination over $\R$ of elements in the image of $c_1$. Note that this means the set of rational classes, in other words the set of classes of the form $q c_1(L)$ for some rational number $q$ and some line bundle $L$, is dense in $H^{(1,1)}(X)$.

Now, the cone of K\"ahler classes $K$ is open in $H^{(1,1)}(X)$. This means we can take a set of rational classes $\eta_1,\ldots,\eta_j$ in $K$ that span $H^{(1,1)}(X)$ over $\R$. Moreover, these classes define an open subcone of $K$, 
$$ C = \left\{\sum_i\lambda_i\eta_i: \lambda_i\in \R_+\right\}.$$ 
For any $\alpha \in K$ we may take a rational class $\eta_0$ in the open set $(\alpha-C)\cap K$ which is nonempty since $\alpha$ is in the interior of $K$. This means $\alpha=\eta_0+\kappa$ where $\kappa\in C$ and \eqref{eq:RBundle} follows. 
\end{proof}

Noting that any divisor on a toric manifold is linearly equivalent to an $(S^1)^n$-invariant divisor, Lemma~\ref{lemma:RClasses} and the discussion preceding it gives:
\begin{lemma}
\label{lemma:ClassPolytope}
Let $\alpha$ be a K\"ahler class on $X$ and $P$ be the polytope corresponding to $\alpha$. Then \eqref{eq:MetricRepresentation} gives a one to one correspondence between the $(S^1)^n$ invariant elements in $\alpha$ and $E(P)$. Moreover, if $\alpha=c_1(L)$ where $L$ is a toric line bundle over $X$, then this correspondence is given by $\theta\mapsto f$ where
$$ f(\log|z_1|,\ldots,\log|z_n|) := -\log ||s_0||^2 $$
where $s_0$ is the $(S^1)^n$-invariant (meromorphic) section corresponding to the point $0\in \M\otimes \R$ and $||\cdot||$ is the metric on $L$ with curvature $\theta$. 
\end{lemma}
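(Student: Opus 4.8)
The plan is to reduce the lemma to three facts already in hand: the classical dictionary recalled above (from \cite{BermanBerndtsson}) between $(S^1)^n$-invariant metrics on an ample toric line bundle $L$ and functions in $E(P_L)$; its trivial extension to finite combinations $\sum_i\lambda_ic_1(L_i)$ with all $\lambda_i>0$; and Lemma~\ref{lemma:RClasses} together with the toric fact that every divisor on $X$ is linearly equivalent to an $(S^1)^n$-invariant divisor. The last two items say that an \emph{arbitrary} K\"ahler class on the toric Fano manifold $X$ is a positive-real combination of classes of \emph{ample toric} line bundles, which is precisely the hypothesis under which the extended dictionary applies; the ``moreover'' clause is then just the line-bundle dictionary in the special case $\alpha=c_1(L)$.

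Concretely, I would argue as follows. By Lemma~\ref{lemma:RClasses}, write $\alpha=\sum_{i=1}^m\lambda_ic_1(L_i)$ with $\lambda_i\in\R$, $\lambda_i>0$, and $L_i$ ample. Each $L_i$ corresponds to a divisor which, on the toric manifold $X$, is linearly equivalent to an $(S^1)^n$-invariant divisor; replacing $L_i$ by the associated toric line bundle alters neither $c_1(L_i)$ nor its ampleness, so we may assume every $L_i$ is an ample toric line bundle, with polytope $P_{L_i}$. Fix $(S^1)^n$-invariant reference curvature forms $\theta_i\in c_1(L_i)$ and let $f_i\in E(P_{L_i})$ be the functions given by the line-bundle dictionary; by that dictionary $f_i$, expressed in the coordinates $\log|z_1|,\dots,\log|z_n|$, equals $-\log||s_0||^2$ for the $(S^1)^n$-invariant meromorphic section $s_0$ attached to $0\in M\otimes\R$, which is exactly the ``moreover'' statement. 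Set $P:=\sum_i\lambda_iP_{L_i}$. Now, given any $(S^1)^n$-invariant K\"ahler form $\theta\in\alpha$, the $(S^1)^n$-invariant closed real $(1,1)$-form $\theta-\sum_i\lambda_i\theta_i$ is exact, hence equal to $dd^cu$ for some $(S^1)^n$-invariant $u\in C^\infty(X)$; on the open orbit, in the toric coordinates $x_j=\log|z_j|$, the function $u$ becomes a function $h\in C^\infty(\R^n)$ which is bounded together with all its derivatives (because $u$ extends smoothly to the compact manifold $X$). Put $f:=\sum_i\lambda_if_i+h$. Then $f$ satisfies \eqref{eq:MetricRepresentation}, since that identity is just $dd^c(\sum_i\lambda_i\psi_i+u)=\theta$ written in logarithmic coordinates, and $f$ is strictly convex because $\theta>0$ makes its real Hessian positive definite; conversely \eqref{eq:MetricRepresentation} recovers $\theta$ from any such $f$, and the two assignments are mutually inverse up to the additive normalization already inherent in the line-bundle case. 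What remains is to check that $f\in E(P)$, i.e.\ that $\overline{\nabla f(\R^n)}=P$, and that $P$ is independent of the chosen decomposition of $\alpha$.

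This last point is the only step that is not pure formality, and I expect it to be the (mild) main obstacle. I would settle it by convex duality. The Legendre transform of $\sum_i\lambda_if_i+h$ differs from that of $\sum_i\lambda_if_i$ by a bounded function, so they have the same effective domain; the transform of $\sum_i\lambda_if_i$ is the inf-convolution of the $(\lambda_if_i)^*$, whose domain is the Minkowski combination $\sum_i\lambda_i\dom(f_i^*)$, and $\overline{\dom(f_i^*)}=\overline{\nabla f_i(\R^n)}=P_{L_i}$. Since for a smooth strictly convex function on $\R^n$ the closure of the image of the gradient equals the closure of the domain of the Legendre transform, this gives $\overline{\nabla f(\R^n)}=\sum_i\lambda_iP_{L_i}=P$. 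Two decompositions of $\alpha$ differ by exact classes, which are absorbed into $h$ and hence yield only translates of $P$; fixing the $(S^1)^n$-linearizations of the $L_i$ (equivalently, the choices of $s_0$) pins down $P$ itself. Carrying out this convex-geometric bookkeeping — rather than anything complex-analytic — is the real substance here, and is what makes the ``extends trivially'' remark preceding the lemma legitimate.
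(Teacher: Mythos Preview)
Your proposal is correct and follows exactly the route the paper takes: the paper's entire ``proof'' is the single sentence preceding the lemma, which says that the result follows from Lemma~\ref{lemma:RClasses}, the fact that every divisor on a toric manifold is linearly equivalent to an $(S^1)^n$-invariant one, and the line-bundle dictionary recalled just before (with its ``trivial extension'' to positive real combinations). You have simply unpacked that sentence, supplying the convex-geometric verification (via Legendre transforms and inf-convolution) that $\overline{\nabla f(\R^n)}=\sum_i\lambda_iP_{L_i}$, which the paper leaves implicit in the phrase ``extends trivially.''
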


For each $i$, let $h_i:\R^n\rightarrow \R$ be defined by
$$ h_i(x) = \log\frac{1}{N_P}\sum_y e^{\langle y, x\rangle} $$
where the sum is taken over all vertices of the polytope $P_i$ and $N_P$ is the number of vertices of the polytope $P_i$. These functions are smooth, strictly convex and satisfy $\overline {\nabla h_i(\R^n)}=P_i$, hence $h_i\in E(P_i)$. For each $i$, let $\theta_i$ be the element in $\alpha_i$ corresponding to $h_i$. Then there is a one to one correspondence between $E(P_i)$ and the smooth $(S^1)^n$-invariant elements of $\Psh(X,\theta_i)$ given by
\begin{equation}
\label{eq:PshConvex}
f_i(x)-h_i(x) = \phi_i(e^{x}). 
\end{equation}
Moreover, $h_i(0) = 0$ for each $i$. This means the normalization \eqref{eq:Normalization} is equivalent to
    \begin{equation}
    \label{eq:RealNormalization}
    f_1(0)=\ldots=f_k(0).
    \end{equation}  
Using the correspondence in \eqref{eq:PshConvex}, it is possible to rewrite \eqref{eq:ContinuitySetup} to a real Monge-Amp\`ere equation. 

\begin{lemma}
    \label{lemma:RealMA}
    Assume $(\phi_i)$ and $(f_i)$ are related as in \eqref{eq:PshConvex}. Then, for $t\in [0,1]$, $(\phi)$ satisfies \eqref{eq:ContinuitySetup} if and only if $(f_i)$ satisfies
    \begin{eqnarray}
        \frac{e^{\langle V_1,\nabla f_1\rangle}}{\Vol_{V_1}(P_1)}\det\left(\frac{\partial^2 f_1}{\partial x_m\partial x_l}\right) = \ldots & = & \frac{e^{\langle V_k,\nabla f_k\rangle}}{\Vol_{V_k}(P_k)} \det\left(\frac{\partial^2 f_k}{\partial x_m\partial x_l}\right) \nonumber \\
        & = & e^{-t\sum_i f_i-(1-t)\sum_i h_i}. 
        \label{eq:RealContinuitySetup}
    \end{eqnarray}
\end{lemma}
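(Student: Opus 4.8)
The plan is to translate the complex Monge--Amp\`ere system \eqref{eq:ContinuitySetup} term by term into the real setting using the toric dictionary already set up, namely the correspondence \eqref{eq:MetricRepresentation}, \eqref{eq:PshConvex} and Lemma~\ref{lemma:ClassPolytope}. First I would record the basic computation that, in logarithmic coordinates $\sigma_j=\log z_j$ with $x_j=\Re\sigma_j$, an $(S^1)^n$-invariant function $\psi$ on $(\C^*)^n$ depending only on $x=(x_1,\ldots,x_n)$ satisfies
$$ (dd^c\psi)^n = \det\left(\frac{\partial^2 f}{\partial x_m\partial x_l}\right)\, d\mu_{\mathrm{Leb}}, $$
where $f$ is the associated convex function and $d\mu_{\mathrm{Leb}}$ is the standard (suitably normalized) Lebesgue-type volume coming from the flat metric on $(\C^*)^n$; this is the standard Hessian-form of the Monge--Amp\`ere operator in toric coordinates. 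Applying this to $\theta_i+dd^c\phi_i$, whose potential relative to the reference is $f_i = h_i + \phi_i\circ\exp$ by \eqref{eq:PshConvex}, converts the left-hand factors $(\theta_i+dd^c\phi_i)^n$ into $\det(\partial^2 f_i/\partial x_m\partial x_l)\, d\mu_{\mathrm{Leb}}$.

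Next I would handle the weight factors. For the soliton weight, since $V_i$ is a toric vector field, in logarithmic coordinates $V_i=\sum_m (V_i)_m \partial/\partial\sigma_m$ with constant coefficients, so acting on an $(S^1)^n$-invariant $\phi_i$ one gets $V_i(\phi_i) = \langle V_i,\nabla f_i\rangle - \langle V_i,\nabla h_i\rangle$ (the gradient being in the $x$-variables), modulo being careful about the factor $1/2$ or $2$ coming from $\partial$ versus $d$; I would fix the normalization so the statement comes out clean. For the function $g_i$ satisfying $dd^c g_i = L_{V_i}(\theta_i)$ and $\int_X e^{g_i}\theta_i^n=1$: the identity $L_{V_i}(dd^c\phi_i)=dd^c V_i(\phi_i)$ noted before Lemma~\ref{lemma:ComplexEquation} shows $g_i + V_i(\phi_i)$ is, up to an additive constant, a potential for $L_{V_i}(\theta_i+dd^c\phi_i)$; comparing with the $\theta_i=dd^c\psi_i$ representation and the fact that $\psi_i$ corresponds to $h_i$, one finds $g_i$ corresponds to $\langle V_i,\nabla h_i\rangle + \mathrm{const}$. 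Combining, $e^{g_i+V_i(\phi_i)}(\theta_i+dd^c\phi_i)^n$ becomes $e^{\langle V_i,\nabla f_i\rangle}\det(\partial^2 f_i/\partial x_m\partial x_l)\cdot(\text{const}_i)\, d\mu_{\mathrm{Leb}}$, where the constant absorbs the comparison constant and the normalization; the normalization $\int_X e^{g_i}\theta_i^n=1$ together with the change of variables $x\mapsto\nabla h_i(x)$ forces the constant to be exactly $1/\Vol_{V_i}(P_i)$ — this is where the weighted volume $\Vol_{V_i}(P_i)=\int_{P_i}e^{\langle V_i,p\rangle}dp$ enters, via $\int_{\R^n} e^{\langle V_i,\nabla h_i\rangle}\det(\Hess h_i)\,dx = \int_{P_i} e^{\langle V_i,p\rangle}dp$.

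Finally, for the right-hand side: by assumption $\Ric\omega_0 = \sum_i\theta_i$ and $\int_X\omega_0^n=1$, so $\omega_0^n$ is an $(S^1)^n$-invariant volume form whose potential relative to $\prod_i(\text{reference metrics})$ is governed by $\sum_i h_i$; concretely $\omega_0^n = e^{-\sum_i h_i}(\text{const})\, d\mu_{\mathrm{Leb}}$, with the constant again pinned down to $1$ by $\int_X\omega_0^n=1$ and the same change of variables. Then $e^{-t\sum_i\phi_i}\omega_0^n = e^{-t\sum_i(f_i-h_i)}e^{-\sum_i h_i}d\mu_{\mathrm{Leb}} = e^{-t\sum_i f_i - (1-t)\sum_i h_i}\, d\mu_{\mathrm{Leb}}$, which is exactly the right-hand side of \eqref{eq:RealContinuitySetup}. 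Dividing the common Lebesgue factor $d\mu_{\mathrm{Leb}}$ throughout yields the equivalence; since the correspondence \eqref{eq:PshConvex} between smooth $(S^1)^n$-invariant $\phi_i\in\Psh(X,\theta_i)$ and $f_i\in E(P_i)$ is a bijection, nothing is lost. The main obstacle is purely bookkeeping: getting all the normalizing constants (the $1/2$ versus $2$ in $dd^c$ versus the real Hessian, the comparison constants between $g_i$ and $\langle V_i,\nabla h_i\rangle$, and the Lebesgue normalization of $\omega_0^n$ and $\theta_i^n$) to cancel so that precisely $\Vol_{V_i}(P_i)$ and $e^{-(1-t)\sum h_i}$ appear, with no stray factors; I would do this by checking the total-mass identity $\int_X e^{g_i}\theta_i^n = 1 = \int_X\omega_0^n$ against the integral of each side over $\R^n$ after the change of variables, which forces every constant simultaneously.
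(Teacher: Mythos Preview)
Your proposal is correct and follows essentially the same approach as the paper: convert $(\theta_i+dd^c\phi_i)^n$ to a real Hessian determinant via \eqref{eq:MetricRepresentation}, identify $g_i+V_i(\phi_i)$ with $\langle V_i,\nabla f_i\rangle$ up to an additive constant (the paper does this via a direct $dd^c$ computation rather than your split into $g_i\leftrightarrow\langle V_i,\nabla h_i\rangle$ and $V_i(\phi_i)=\langle V_i,\nabla(f_i-h_i)\rangle$, but these are equivalent), pin down that constant as $-\log\Vol_{V_i}(P_i)$ using the mass normalization and the change of variables $p=\nabla f_i$, and identify $\omega_0^n$ with $e^{-\sum h_i}d\sigma d\bar\sigma$ via the Ricci condition (the paper phrases this as equality of two metrics on $-K_X$ and invokes uniqueness in Calabi--Yau). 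The bookkeeping caveat you flag is exactly the only delicate point, and the paper resolves it the same way you propose, by matching total masses.
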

\begin{proof}
First of all, using \eqref{eq:MetricRepresentation} we see that
\begin{eqnarray}
(\theta_i+dd^c\phi_i)^n & = & \left(\sum_{m,l}\frac{\partial^2 f_i}{\partial x_m\partial x_l}d\sigma_jd\bar\sigma_l\right)^n \nonumber \\
& = &  \det\left(\frac{\partial^2 f_i}{\partial x_m\partial x_l}\right)d\sigma d\bar\sigma, \label{eq:RealComplexMA}
\end{eqnarray}
where $d\sigma d\bar\sigma = d\sigma_1\ldots d\sigma_nd\bar\sigma_1\ldots d\bar\sigma_n$. 

Abusing notation, we may think of $f_i$ and $h_i$ as $(S^1)^n$-invariant plurisubharmonic functions on $(\C^*)^n\subset X$. We will show that 
\begin{equation}
    \label{eq:ToricRHS}
    e^{-t\sum_i\phi_i}\omega_0^n = e^{-t\sum_i (f_i-h_i)}\omega_0^n = e^{-t\sum_i f_i-(1-t)\sum_i h_i}d\sigma d\bar\sigma.
\end{equation}
This will follow if we show that 
\begin{equation}
\label{eq:VolumeForms}
e^{\sum h_i}\omega_0^n = d\sigma d\bar\sigma. 
\end{equation}
To do this, we note that by convexity 
$$ \overline{\nabla\left(\sum_i h_i\right)(\R^n)} = \overline{\sum_i \nabla h_i(\R^n)} = \sum P_i = P_{-K_X}. $$
By Lemma~\ref{lemma:ClassPolytope}, $\sum h_i$  defines a metric on $-K_X$ of curvature $\sum \theta_i$ by the relation
$$ ||s_0||_{\sum h_i}^2 = e^{-\sum h_i} $$
where $s_0$ is the unique $(\C^*)^n$-invariant section of $-K_X$, in other words
$$ s_0=\frac{\partial}{\partial \sigma_1}\wedge\ldots\wedge\frac{\partial}{\partial \sigma_k} =  d\sigma^{-1}. $$
Moreover, the volume form $\omega_0^n$ defines a metric on $-K_X$ by the relation
$$ ||d\sigma^{-1}||^2_{\omega_0^n} = \frac{\omega_0^n}{d\sigma d\bar\sigma}. $$
The curvature of $||\cdot||_{\omega_0^n}$ is $\Ric \omega_0 = \sum \theta_i$. By uniqueness in the Calabi-Yau Theorem $||\cdot||_{\sum h_k}=||\cdot||_{\omega_0^n}$ and \eqref{eq:VolumeForms} follows. 

It remains to show that 
\begin{equation} 
\label{eq:GradFactor} 
\frac{e^{\langle V_i,\nabla f_i \rangle}}{\Vol_{V_i}(P_i)} = e^{g_i+{V_i}(\phi_i)}.
\end{equation}
We will first show that 
\begin{equation} 
\label{eq:GradTerm} 
\langle V_i,\nabla f_i \rangle + C_i= g_i+{V_i}(\phi_i),
\end{equation} 
for some $C_i\in R$. Abusing notation again, and thinking of $f_i$ as an $(S^1)^n$-invariant plurisubharmonic function on $(\C^*)^n\subset X$, we compute
\begin{eqnarray}
dd^c\langle V_i,\nabla f_i \rangle & = & dd^c\left(\sum_m \frac{\partial f_i}{\partial x_m} a_m\right) \nonumber \\
& = & \sum_{m,j,l} \frac{\partial^3 f_i}{\partial x_j\partial x_l\partial x_m}a_m d\sigma_j d\bar\sigma_l \nonumber \\
& = & \partial i_V \left(\sum_{m,l} \frac{\partial^2 f_i}{\partial x_m\partial x_l} d\sigma_m d\bar\sigma_l\right) \nonumber \\
& = & \partial i_V (\theta_i+dd^c\phi_i) \nonumber \\
& = & L_V(\theta_i) \nonumber \\
& = & dd^c(g_i+{V_i}(\phi_i)) \nonumber
\end{eqnarray}
and \eqref{eq:GradTerm} follows by the maximum principle. To get \eqref{eq:GradFactor}, note that the push forward of $d\sigma d\bar\sigma$ under the map $(z_1,\ldots,z_n)\mapsto (\log|z_1|,\ldots,\log|z_n|)$ is the Euclidean measure $dx$ on $\R^n$. This means, by \eqref{eq:RealComplexMA} and \eqref{eq:GradTerm},
\begin{equation}
    \label{eq:GradFactorIntegral}
    \int_X e^{g_i+V_i(\phi_i)}\left(\theta_i+dd^c\phi_i\right)^n = \int_{\R^n} \det\left(\frac{\partial^2 f_i}{\partial x_m\partial x_l}\right) e^{\langle V_i,\nabla f_i\rangle+C_i}dx.
\end{equation}
Performing the change of variables $\nabla f_i = p$ we get
$$ \eqref{eq:GradFactorIntegral} = e^{C_i} \int_{P_i} e^{\langle V_i,p\rangle} dp.  $$
By \eqref{eq:WeightedVolumeInvariant} 
$$ \int_X e^{g_i+V_i(\phi_i)}\left(\theta_i+dd^c\phi_i\right)^n = \int_X e^{g_i}\theta_i^n = 1 $$
This means $C=\log\Vol_{V_i}(P_i)$ and \eqref{eq:GradFactor} follows. 

Using \eqref{eq:RealComplexMA}, \eqref{eq:ToricRHS} and \eqref{eq:GradFactor} we conclude that $(f_i)$ satisfies \eqref{eq:RealMA} if and only if $(\phi_i)$ satisfies \eqref{eq:ContinuitySetup} on $(\C^*)^n$. As $(\phi_i)$ is assumed to be smooth, the lemma follows. 
\end{proof}



\subsection{Estimates}
\label{sec:Estimates}
To prove Theorem~\ref{thm:C0Estimates} we need to prove that for all $t_0>0$ there is a constant $C$ such that any solution $(f_i)$ to \eqref{eq:RealContinuitySetup} at $t>t_0$, normalized according to \eqref{eq:RealNormalization},
satisfies
\begin{equation}
\sup_X | f_i - g_i | \leq C 
\label{eq:C0}
\end{equation}
for all $i$. 

For each $i$, let $u_i$ be the Legendre transform of $f_i$. Recall that $f_i$ is a smooth, strictly convex function on $\R^n$ such that $\overline{\nabla f_i(\R^n)} = P_i$. This means each $u_i$ is a smooth, strictly convex function on $P_i$. Moreover, a standard property of the Legendre transform is that
$$ \sup_{\R^n} |f_i - h_i| = \sup_{P_i} |u_i - h^*_i| $$
where $h_i^*$ is the Legendre transform of $h_i$. Since $h_i^*$ is bounded on $P_i$ (this is easy to verify) we have that \eqref{eq:C0} is equivalent to a uniform bound on $\sup_{P_i} |u_i|$. 

We will use a variant of the method of Wang and Zhu \cite{WangZhu} (see also \cite{Donaldson}). The first step is to establsih bounds on the function
$$ w = w_t = \sum_i \left(tf_i + (1-t)h_i\right). $$
Since $w$ is strictly convex and 0 is in the interior of $P_{-K_X}=\overline{\nabla w(\R^n)}$ we have that $w$ is bounded from below and attains its minimial value at a unique point. Let $m=\inf w$ and let $x_w$ be the minimal point of $w$.
\begin{lemma}\label{lemma:ControlOfW}
Assume $t_0>0$ and \eqref{eq:SolitonCondition} holds. Then there are constants $C$ and $\epsilon$ such that if $(f_i)$ is a solution to \eqref{eq:RealContinuitySetup} at $t>t_0$, then 
\begin{equation} 
\label{eq:wBound}
w\geq \epsilon|x-x_w|-C
\end{equation}
and 
\begin{equation}
\label{eq:x0Bound}
|x_w|\leq C. 
\end{equation}
\end{lemma}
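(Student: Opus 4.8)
\textbf{Proof proposal for Lemma~\ref{lemma:ControlOfW}.}

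The plan is to exploit the real Monge-Amp\`ere equation \eqref{eq:RealContinuitySetup} through its normalized form for $w$. Summing the logarithms of the $k$ middle equalities (or rather averaging), each factor $\det(\partial^2 f_i / \partial x_m \partial x_l)$ is controlled and the right-hand side is precisely $e^{-w}$. So $w$ satisfies a Monge-Amp\`ere-type inequality: integrating $e^{-w}$ over $\R^n$ is bounded above and below by universal constants, since $\int_{\R^n} e^{-w}\,dx$ can be read off from the equation together with the normalization $\int_X \omega_0^n = 1$ and the weighted-volume identity \eqref{eq:WeightedVolumeInvariant}. In fact the key normalization is $\int_{\R^n} \det(\partial^2 f_i/\partial x_m \partial x_l) e^{\langle V_i, \nabla f_i\rangle}/\Vol_{V_i}(P_i)\,dx = 1$, which after comparing with the common value $e^{-w}$ (adjusted by the convex combination with the $h_i$) yields $\int_{\R^n} e^{-w}\,dx$ bounded between two positive constants depending only on the polytopes and the $V_i$ — hence uniform for $t \geq t_0$.

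The second ingredient is a lower bound of the form $w(x) \geq \epsilon|x - x_w| - C$. Here I would follow the Wang-Zhu / Donaldson argument: $w$ is a sum of convex functions each of which is an element of $E(P_i)$ (or a convex combination of $h_i$ and such), so $\overline{\nabla w(\R^n)} = \sum_i P_i = P_{-K_X}$, and $0$ lies in the \emph{interior} of $P_{-K_X}$. Convexity of $w$ together with $0 \in \operatorname{int}(P_{-K_X})$ forces linear growth: pick $\delta > 0$ with the ball $B_\delta(0) \subset P_{-K_X}$; then for any unit vector $e$ and any $x$, choosing $y$ in the subgradient image near $\delta e$ gives $w(x) - w(x_w) \geq \langle \nabla w(x_w) , x - x_w \rangle + (\text{slope in direction } e)$, and a standard convex-geometry estimate upgrades this to $w(x) \geq \epsilon |x - x_w| - C$ once we know $\int e^{-w} \leq C$ (the integral bound is what pins down the additive constant $C$: if $w - m$ grew too slowly, $\int e^{-w}$ would be too large; combined with the linear-growth lower bound coming from convexity this gives both $\epsilon$ and $C$ uniform). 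The precise mechanism is: linear growth rate $\epsilon$ is governed purely by $\operatorname{dist}(0, \partial P_{-K_X})$ and is therefore automatic and uniform; the constant $C$ (equivalently, control of $m = \inf w$) comes from $\int_{\R^n} e^{-w}\,dx$ being uniformly bounded above.

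For the bound $|x_w| \leq C$ on the location of the minimum, this is where condition \eqref{eq:SolitonCondition}, $\sum_i \mathcal{A}_{P_i}(V_i) = 0$, enters — and I expect this to be the main obstacle. At $x_w$ we have $\nabla w(x_w) = 0$, i.e. $\sum_i (t \nabla f_i(x_w) + (1-t)\nabla h_i(x_w)) = 0$. The idea is to test the Monge-Amp\`ere equation against the affine function $x \mapsto \langle V_i, x\rangle$ or to integrate the identity $\nabla f_i(x)\,e^{-w}$-type quantities and perform the change of variables $p = \nabla f_i$. Concretely, from \eqref{eq:RealContinuitySetup}, for each $i$, $\det(\partial^2 f_i) e^{\langle V_i, \nabla f_i\rangle} / \Vol_{V_i}(P_i) = e^{-w}$; multiplying by $\nabla f_i(x)$ and integrating over $\R^n$, the left side becomes (after the substitution $p = \nabla f_i$) exactly $\mathcal{A}_{P_i}(V_i)$, while the right side is $\int_{\R^n} \nabla f_i(x)\, e^{-w(x)}\,dx / \int_{\R^n} e^{-w}\,dx$ up to the normalization. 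Summing over $i$ and using $\sum_i \mathcal{A}_{P_i}(V_i) = 0$ gives that the $e^{-w}$-weighted average of $\sum_i \nabla f_i$ (which is $t^{-1}(\nabla w - (1-t)\sum_i \nabla h_i)$) vanishes or is controlled; combined with the linear-growth estimate \eqref{eq:wBound} — which says $e^{-w}$ is concentrated near $x_w$ up to a uniform error — this pins $x_w$ to a bounded region. The delicate point is that this argument is slightly circular: \eqref{eq:wBound} is stated with a constant $C$ that a priori could depend on $|x_w|$, so one must first establish the $\epsilon$-growth with a \emph{$x_w$-independent} slope (pure convexity, fine), then use the weighted-barycenter identity to bound $|x_w|$, and only then conclude the full statement \eqref{eq:wBound} with uniform $C$. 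I would structure the proof to make this order of logical dependence explicit: (1) uniform two-sided bound on $\int e^{-w}$; (2) uniform linear growth slope $\epsilon$ from convexity and $0 \in \operatorname{int} P_{-K_X}$, with additive constant measured relative to $x_w$; (3) the $\mathcal{A}$-identity plus \eqref{eq:SolitonCondition} to bound $|x_w|$; (4) combine to get \eqref{eq:wBound} and \eqref{eq:x0Bound} with uniform constants.
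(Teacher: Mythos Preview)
Your overall strategy --- bound $\int_{\R^n}e^{-w}$, establish linear growth of $w$ away from its minimum, then use the barycenter identity coming from \eqref{eq:SolitonCondition} to locate $x_w$ --- matches the paper's four-step proof, and your remarks on the logical ordering of the steps are correct. The identity you propose for Step~4 (summing $\int \nabla f_i\,e^{-w}$ and changing variables $p=\nabla f_i$) is exactly what the paper does; combined with $\int\nabla w\,e^{-w}=0$ it yields $\int\nabla(\sum_i h_i)\,e^{-w}=0$, and monotonicity of the \emph{fixed} map $\nabla\sum_i h_i$ then pins down $x_w$.

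There is, however, a genuine gap in your mechanism for the linear growth \eqref{eq:wBound}. You assert that the slope $\epsilon$ is ``governed purely by $\operatorname{dist}(0,\partial P_{-K_X})$'' and that control of $m=\inf w$ comes from $\int e^{-w}$ being bounded above. Neither holds. Knowing only that $w$ is convex with $\overline{\nabla w(\R^n)}=P_{-K_X}$ and $\int e^{-w}=1$ does \emph{not} bound $m$ from above, nor the diameter of $K=\{w\le m+1\}$: one can take $w$ essentially equal to $m$ on a ball $B_R(x_w)$ with $R\sim e^{m/n}$ and then growing with gradient in $P_{-K_X}$, keeping $\int e^{-w}=1$ while $m\to\infty$. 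The inequality $w\le m+\rho_{P_{-K_X}}(\cdot-x_w)$ together with $\int e^{-w}\le C$ gives only the \emph{lower} bound on $m$ (this is the paper's Step~1).

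The missing ingredient is a pointwise Hessian lower bound for $w$ itself. From \eqref{eq:RealContinuitySetup} and monotonicity of the determinant on positive matrices,
\[
\det\Bigl(\tfrac{\partial^2 w}{\partial x_m\partial x_l}\Bigr)\ \ge\ t_0^{\,n}\det\Bigl(\tfrac{\partial^2 f_j}{\partial x_m\partial x_l}\Bigr)\ \ge\ Ce^{-w},
\]
hence $\det(\partial^2 w)\ge Ce^{-m-1}$ on $K$. The paper then invokes Donaldson's comparison lemma (Lemma~\ref{lemma:KVolume}): a convex function with $\det(\partial^2 w)\ge\lambda$ on its unit sublevel set has $\Vol(K)\le C\lambda^{-1/2}$. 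This gives $\Vol(K)\le Ce^{m/2}$, and the co-area inequality then yields $1=\int e^{-w}\le C\,\Vol(K)\,e^{-m}\le Ce^{-m/2}$, i.e.\ $m$ bounded above. Only now can one bound the diameter of $K$ (it contains a uniform ball since $\nabla w$ is bounded, and its volume is controlled) and deduce the uniform linear growth. You cite Wang--Zhu/Donaldson but the ``precise mechanism'' you describe omits exactly this use of the Monge--Amp\`ere lower bound and Lemma~\ref{lemma:KVolume}, without which the argument fails.
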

The proof of Lemma~\ref{lemma:ControlOfW} follows one of the arguments in \cite{Donaldson} which is based on \cite{WangZhu}. The main point is the following convex geometric fact (see Proposition 2 in \cite{Donaldson})
\begin{lemma}
\label{lemma:KVolume}
Assume $f$ is a convex function on $\R^n$ attaining minimal value $0$, and suppose 
$$\det\left(\frac{\partial^2 f}{\partial x_m\partial x_l}\right)\geq \lambda$$
on $K = \{f \leq 1\}$. Then 
$$ \Vol(K)\leq C\lambda^{-1/2} $$
for some constant $C$ depending only on the dimension $n$. 
\end{lemma}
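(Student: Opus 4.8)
The plan is to prove Lemma~\ref{lemma:KVolume} by a John-type normalization followed by a comparison of $f$ with an explicit quadratic on a carefully chosen ellipsoid inside $K$. First I would observe that since $f$ is convex with minimum value $0$ and $K = \{f \leq 1\}$ is a bounded convex body (bounded because $f$ is convex and, after the normalization below, genuinely grows), John's theorem provides an affine map $T$ such that $B \subseteq T^{-1}(K) \subseteq nB$, where $B$ is the unit ball centered at the John center. Setting $\tilde f = f \circ T$, the hypotheses transform with the Jacobian factor: $\det \Hess \tilde f = (\det T)^2 \det \Hess f \geq \lambda (\det T)^2$ on $\tilde K = T^{-1}(K) \supseteq B$, and $\Vol(\tilde K) = (\det T)^{-1}\Vol(K)$. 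So it suffices to bound $\Vol(\tilde K)$ in terms of $\lambda (\det T)^2$; since $\tilde K \subseteq nB$ we have $\Vol(\tilde K) \leq c_n$, and what I really need is that $\lambda (\det T)^2$ is bounded above, equivalently that $\Vol(\tilde K)^2 \lambda \leq C^2$. Rescaling this way, the statement reduces to: if $g$ is convex with $\min g = 0$, $\{g \leq 1\} \supseteq B_1$, and $\det\Hess g \geq \mu$ on $B_1$, then $\mu \leq C$ — i.e. one cannot have a uniformly large Monge-Amp\`ere density on a unit ball while staying below height $1$.

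The key step is this last reduction, proved by an explicit barrier. Let $x_*$ be the minimum point of $g$; after translating we may take $x_* \in \frac{1}{2}B_1$ or argue directly on the ball where $g$ is controlled. Consider the quadratic $q(x) = c|x - x_*|^2$ with $c$ chosen so that $q \leq 1$ on a ball of radius $r$ around $x_*$ contained in $B_1$; on that ball $g$ also lies between $0$ and something bounded. The comparison principle for the Monge-Amp\`ere operator states that if $g = q$ on $\partial(\text{ball})$ and $\det \Hess g \geq \det \Hess q$ inside, then $g \leq q$ inside; matching boundary values forces $\det\Hess q = (2c)^n$ to dominate $\mu$, giving $\mu \leq (2c)^n \leq C(n)$. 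The cleanest route is: on the ball $B_r(x_*)$ with $r$ a fixed fraction of $1$, $g$ is bounded above (by convexity and $g \leq 1$ on $B_1$, say $g \leq 2$ there after controlling $x_*$), so comparing $g - g(x_*)$ against the solution of $\det\Hess v = \mu$, $v = 2$ on $\partial B_r(x_*)$, $v$ radial — which is the explicit function $v(x) = 2(|x-x_*|/r)^{?}$-type convex quadratic-like barrier with $\det \Hess v = \mu$ — yields $0 = g(x_*) - g(x_*) \geq v(x_*) = 2 - C_n \mu^{1/n} r^2$, hence $\mu^{1/n} \geq c_n/r^2$ is impossible for large $\mu$; turning this around bounds $\mu$. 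Tracking the ball radius $r$ back through the John normalization is exactly where the factor $\Vol(K)$ enters: $r$ scales like the inradius of $\tilde K$, John's lemma ties $\det T$ to $\Vol(K)$, and assembling the inequalities produces $\Vol(K) \leq C\lambda^{-1/2}$.

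The main obstacle I anticipate is bookkeeping the normalization rather than any deep point: making sure the John map is applied to $K$ (which must first be shown bounded — this uses that $0$ is interior to $\overline{\nabla w(\R^n)}$ in the application, but abstractly follows because $\det\Hess f \geq \lambda > 0$ forces strict convexity and hence $\{f\leq 1\}$ bounded), correctly transforming the Monge-Amp\`ere density and the volume under the affine change of variables, and locating the minimum point inside the normalized body so the quadratic barrier has room. The power $\lambda^{-1/2}$ (rather than $\lambda^{-1/n}$) comes out because the John ellipsoid has $n$ axes but the determinant condition only needs to be integrated against two of them in the sharp argument — more precisely, in the normalized picture one only uses $\Vol(\tilde K) \leq c_n$ together with $\mu (\det T)^{?}$ and the homogeneity of $\det$ under $T$, and the exponent $2$ is dictated by $(\det T)^2$ appearing in the transformation of $\det\Hess$. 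I would follow Donaldson's Proposition 2 for the precise constants; since the paper explicitly cites \cite{Donaldson} for this lemma, I would present the argument in the normalized form above and refer to \cite{Donaldson,WangZhu} for the details of the constant-chasing.

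Since this lemma is quoted verbatim from \cite{Donaldson} (Proposition 2) and \cite{WangZhu}, the honest statement of the proof is: \emph{This is Proposition~2 in \cite{Donaldson}}; the sketch above indicates the mechanism (John normalization plus an explicit Monge-Amp\`ere barrier comparison) for the reader's convenience.
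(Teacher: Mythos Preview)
The paper does not prove this lemma at all; it simply records it as ``Proposition~2 in \cite{Donaldson}'' and moves on, exactly as you note in your final paragraph. Your sketch of the mechanism (John normalization of $K$, then a Monge--Amp\`ere comparison with a quadratic barrier on the normalized unit ball to bound $\lambda(\det T)^2$, which unwinds to $\Vol(K)\le C\lambda^{-1/2}$) is a correct outline of the standard argument, though your explanation of why the exponent is $-1/2$ rather than $-1/n$ is muddled; the clean reason is that $\det\Hess(f\circ T)=(\det T)^2\det\Hess f$, so the barrier bound $\lambda(\det T)^2\le C_n$ gives $|\det T|\le C_n\lambda^{-1/2}$ and hence $\Vol(K)=|\det T|\,\Vol(\tilde K)\le C_n\lambda^{-1/2}$.
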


Using Lemma~\ref{lemma:KVolume} we can prove Lemma~\ref{lemma:ControlOfW}.
\begin{proof}[Proof of Lemma~\ref{lemma:ControlOfW}]
The proof proceeds in four steps:
\paragraph{Step 1: $m$ is bounded from below.} Let $\rho_{-K_X}$ be the support function of $P_{-K_X}$ defined by
$$ \rho_{-K_X}(x)=\sup_{p\in P_{-K_X}} \langle x,p \rangle. $$
Since $\nabla w(\R^n)=P_{-K_X}$ we have $w\leq m+\rho_{-K_X}$. Moreover, by the change of variables $p=\nabla f_i$
\begin{eqnarray} 
1 & = & \frac{\int_{P_i} e^{\langle V_i,p\rangle} dp}{\Vol_{V_i}(P_i)} \nonumber \\
& = & \int_{\R^n}  \frac{ e^{\langle V_i,\nabla f_i\rangle} }{\Vol_{V_i}(P_i)}
\det\left(\frac{\partial^2 f_i}{\partial x_m\partial x_l}\right)dx \nonumber \\
& = & \int_{\R^n}e^{-w}dx \nonumber \\
& \geq & Ce^{-m}\int_{\R^n} e^{-\rho_{-K_X}} dx \nonumber \\
& \geq & Ce^{-m}, \nonumber
\end{eqnarray}
possibly changing $C$ in the last inequality. This means $m$ is bounded from below by a uniform constant. 

\paragraph{Step 2: $m$ is bounded from above.}
By monotonicity of the determinant function and convexity
we have
\begin{eqnarray}
\det\left(\frac{\partial^2 w}{\partial x_m\partial x_l}\right)  & = & \det\left[t\sum_i\left(\frac{\partial^2 f_i}{\partial x_m\partial x_l}\right)+(1-t)\sum_i\left(\frac{\partial^2 h_i}{\partial x_m\partial x_l}\right)\right] \nonumber \\
& \geq & t_0^n\det\left(\frac{\partial^2 f_j}{\partial x_m\partial x_l}\right) \nonumber \\
& = & t_0^n \Vol_{V_j}(P_j)e^{-\langle V_j,\nabla f_j\rangle-w} \nonumber \\
& \geq & Ce^{-w}dx,
\nonumber
\end{eqnarray}
where the last inequality follows from the fact that $\overline{\nabla f_j (\R^n)} = P_j$ is bounded. This means $ \det\left(\frac{\partial^2 w}{\partial x_m\partial x_l}\right)\geq Ce^{-m-1}$ on $K = \{w\leq m+1\}$. By Lemma~\ref{lemma:KVolume}, possibly redefining $C$,
\begin{equation}
\label{eq:KVolume}
\Vol(K) \leq C e^{m/2}.
\end{equation}
Convexity of $w$ and the co-area formula gives
$$ 1=\int_{\R^n} e^{-w} dx \leq Ce^{-m/2}. $$
This means $m$ is bounded from above.

\paragraph{Step 3: $w\geq \epsilon |\cdot-x_w| - m + 1$ for uniform constants $\epsilon$ and $C$.} 
Since $\overline{\nabla w(\R^n)}=P_{-K_X}$ and $P_{-K_X}$ is bounded we have that there is a uniform constant $r>0$ such that $K$ contains a small ball centered at $x_w$ of radius $r$. If there was a point in $K$ far from $x_w$ then the volume of $K$ would be very big, contradicting \eqref{eq:KVolume}. This means $K$ is contained in a ball centered at $x_w$ of radius $R$ for some uniform constant $R$. Convexity of $w$ gives
$$ 
w(x) \geq 
\begin{cases}
R^{-1}|x-x_w|+m & \textnormal{if } x\notin K \\
m & \textnormal{if } x\in K
\end{cases}
$$
Moreover, $R^{-1}|x-x_w|\leq 1$ on $K$. This means putting $\epsilon = 1/R$ finishes Step 3.

\paragraph{Step 4: $|x_w|$ is bounded.} 
In this step we will use the assumption \eqref{eq:SolitonCondition}. By the Divergence Theorem, since $e^{-w}\rightarrow 0$ exponentially as $|x|\rightarrow \infty$,
$$ \int_{\R^n} \nabla  we^{-w}dx = \int_{\R^n} \div \nabla\left( e^{-w}\right)dx = 0. $$
Moreover, 
\begin{eqnarray}
\int_{\R^n} \nabla \left(\sum_i f_i\right) e^{-w} dx & = & \sum_i \int_{\R^n} \nabla f_ie^{-w}dx \nonumber \\
& = & \sum_i \frac{1}{\Vol_{V_i}(P_i)}\int_{\R^n} \nabla f_ie^{\langle V_i, \nabla f_i\rangle}  \det\left(\frac{\partial^2 f_i}{\partial x_m\partial x_l}\right)dx \nonumber \\
& = & \sum_i \frac{1}{\Vol_{V_i}(P_i)} \int_{P_i} pe^{\langle V_i, p\rangle} dp = 0, \nonumber
\end{eqnarray}
where the last two equalities are given by performing the change of variables $p=\nabla f_i(x)$ in each summand and \eqref{eq:SolitonCondition}.
This means 
\begin{equation}
\label{eq:fBarycenter}
\int_{\R^n} \nabla \left(\sum_i h_i\right) e^{-w} dx = 0.
\end{equation}
Recall that $\sum h_i$ is convex and hence $\nabla \left(\sum_i h_i\right)$ is monotone. Hence, if $|x_w|$ is large then, putting $v=x_w/|x_w|$, we get that $\langle x,v \rangle$ is positive and bounded away from 0 on some large ball centered at $x_w$. By \eqref{eq:wBound} the mass of $e^{-w}dx$ is concentrated around $x_w$. This contradicts \eqref{eq:fBarycenter}.
\end{proof}


We can now prove Theorem~\ref{thm:C0Estimates}.
\begin{proof}[Proof of Theorem~\ref{thm:C0Estimates}]
First of all, by the change of variables $x=\nabla u_i(p)$ and \eqref{eq:ContinuitySetup} we have
\begin{eqnarray} 
\int_{P_i} |\nabla u_i|^q dp & = & \int_{\R^n} |x|^q \det\left(\frac{\partial^2 f_i}{\partial x_m\partial x_l}\right) dx \nonumber \\
& \leq & \Vol_{V_i}(P_i)\int_{\R^n} |x|^q e^{-\langle V_i,\nabla f_i\rangle-w} dx \nonumber \\
& \leq & C\int_{\R^n} |x|^qe^{-w} dx \nonumber \\
& \leq & C_q \label{eq:LqNorm}
\end{eqnarray}
where the second inequality follows from the fact that $\overline{\nabla f_i(\R^n)}=P_i$ is bounded and the last inequality follows from Lemma~\ref{lemma:ControlOfW}. Put $q=n+1$ and 
$$ \hat u_i = \frac{1}{\Vol(P_i)}\int_{P_i} u_i dp. $$
By Morrey's inequality (see \cite{HaskovecSchmeiser}) we have
\begin{eqnarray}
|| u_i-\hat u_i ||_{C^{0,\gamma}(P_i)} & \leq & C || u_i-\hat u_i ||_{W^{1,q}(P_i)} \nonumber \\
& = & C || u_i-\hat u_i ||_{L^{q}(P_i)} + C || \nabla u_i ||_{L^{q}(P_i)}. 
\label{eq:Morreys}
\end{eqnarray}
where $\gamma = 1-n/q$. By the Poincar\'e-Wirtinger inequality this can be bounded by $$C|| \nabla u_i ||_{L^{q}(P_i)}$$ for some $C$. This is bounded by \eqref{eq:LqNorm}. Since $P_i$ is bounded we may conclude from this that 
\begin{equation} 
\sup_{p_1,p_2\in P_i} |u_i(p_1)-u_i(p_2)| \leq C|| u_i-\hat u_i ||_{C^{0,\gamma}(P_i)} \leq C.
\label{eq:uiVariation}
\end{equation}
This means it suffices to bound each $u_i$ in some point.

To bound each $u_i$ in some point, note that by general properties of Legendre transform $f_i(0)=-u_i(\nabla f_i(0))$. This means
$$ |u_i(\nabla f_i(0))| = |f_i(0)| = \frac{1}{k}\left|\sum_j f_j(0)\right| = \frac{1}{k}|w(0)|$$
where the last two equalities follow from \eqref{eq:RealNormalization} and the fact that $h_i(0)=0$ for all $i$. Since $|x_w|$ is bounded and $\nabla w \in P_{-K_X}$ is bounded we have that $|w(0)-w(x_w)|$ is bounded. By Lemma~\ref{lemma:ControlOfW}, $|w(x_w)|=|m|$ is bounded. This means $|u_i(\nabla f_i(0))|$ and hence, by \eqref{eq:uiVariation}, $\sup_{P_i}|u_i|$ is bounded for each $i$. By the discussion following \eqref{eq:C0} this proves the theorem.
%
\end{proof}


\begin{proof}[Proof of Theorem~\ref{thm:MainTheorem}.]
Assuming \eqref{eq:SolitonCondition} holds, existence of coupled K\"ahler-Ricci solitons follow directly from Theorem~\ref{thm:C0Estimates} and Theorem~\ref{thm:OpennessAndHigherOrderEstimates}. Indeed, any toric holomorphic vector field $V_i$ is in the reductive part of the Lie algebra of $\Aut(X)$. Moreover, $\Im V_i$ generates a compact one-parameter subgroup of $\Aut(X)$ and, since $\theta_i$ is $(S^1)^n$-invariant, $\Im L_V(\theta_i)=0$. 

Assume that $(\alpha_i)$ admits a coupled K\"ahler-Ricci soliton. By Lemma~\ref{lemma:ComplexEquation} and Lemma~\ref{lemma:RealMA}, \eqref{eq:RealMA} admits a solution. Then \eqref{eq:SolitonCondition} follows from Lemma~\ref{lemma:Obstruction} below.
\end{proof}

\begin{lemma}
    \label{lemma:Obstruction}
    Assume \eqref{eq:RealMA} admits a solution. Then 
    $$ \sum_i \mathcal A_{P_i}(V_i)=0. $$
\end{lemma}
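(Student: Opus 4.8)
The plan is to extract the obstruction directly from the real Monge-Amp\`ere system \eqref{eq:RealMA} by an integration-by-parts (divergence theorem) argument, paralleling Step 4 in the proof of Lemma~\ref{lemma:ControlOfW}. Suppose $(f_1,\ldots,f_k)$ solves \eqref{eq:RealMA} under the boundary conditions \eqref{eq:BoundaryCondition}. Writing $w = \sum_i f_i$, the common value of all $k$ expressions in \eqref{eq:RealMA} equals $e^{-w}$, so for each $i$ we have
$$ \frac{e^{\langle V_i,\nabla f_i\rangle}}{\Vol_{V_i}(P_i)}\det\left(\frac{\partial^2 f_i}{\partial x_m\partial x_l}\right) = e^{-w}. $$
Since each $P_i$ is bounded, $\langle V_i,\nabla f_i\rangle$ is bounded, and since $P_{-K_X}$ is bounded with $0$ in its interior (toric Fano), $e^{-w}$ decays exponentially; thus all the integrals below converge and boundary terms at infinity vanish.

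First I would note that $\int_{\R^n}\nabla w\, e^{-w}\,dx = -\int_{\R^n}\nabla(e^{-w})\,dx = 0$ by the divergence theorem, using the exponential decay of $e^{-w}$ together with its gradient. Next, for each $i$, I would compute $\int_{\R^n}\nabla f_i\, e^{-w}\,dx$ by substituting the $i$-th equation of \eqref{eq:RealMA} and performing the change of variables $p = \nabla f_i(x)$, whose image is $P_i$ by \eqref{eq:BoundaryCondition}:
$$ \int_{\R^n}\nabla f_i\, e^{-w}\,dx = \frac{1}{\Vol_{V_i}(P_i)}\int_{\R^n}\nabla f_i\, e^{\langle V_i,\nabla f_i\rangle}\det\left(\frac{\partial^2 f_i}{\partial x_m\partial x_l}\right)dx = \frac{1}{\Vol_{V_i}(P_i)}\int_{P_i} p\, e^{\langle V_i,p\rangle}\,dp = \mathcal A_{P_i}(V_i). $$
Summing over $i$ and using $\nabla w = \sum_i \nabla f_i$ gives $0 = \int_{\R^n}\nabla w\, e^{-w}\,dx = \sum_i \mathcal A_{P_i}(V_i)$, which is the claim.

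The steps that need care rather than genuine difficulty are the justification that the boundary term vanishes (the decay of $e^{-w}$ and of $\nabla w\, e^{-w}$ as $|x|\to\infty$, which follows from $w$ being convex, proper, and growing at least linearly since $0$ is interior to $\overline{\nabla w(\R^n)} = P_{-K_X}$) and the legitimacy of the change of variables $p = \nabla f_i(x)$, which is valid because $f_i$ is strictly convex with $\overline{\nabla f_i(\R^n)} = P_i$, so $\nabla f_i$ is a diffeomorphism onto the interior of $P_i$ with Jacobian $\det(\partial^2 f_i/\partial x_m\partial x_l)$. The only mild subtlety is that $\Vol_{V_i}(P_i)$ and the weight $e^{\langle V_i,p\rangle}$ must be carried consistently through the substitution; once that bookkeeping is done, the identity $\sum_i \mathcal A_{P_i}(V_i) = 0$ drops out immediately. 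I do not expect any serious obstacle here — this is essentially the ``easy direction'' obstruction computation, mirroring the argument already used for \eqref{eq:fBarycenter}.
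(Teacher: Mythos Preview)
Your proposal is correct and follows essentially the same approach as the paper's own proof: both use the divergence theorem to show $\int_{\R^n}\nabla w\,e^{-w}\,dx=0$, then substitute the $i$-th Monge--Amp\`ere equation and perform the change of variables $p=\nabla f_i$ to identify each summand with $\mathcal A_{P_i}(V_i)$. Your write-up is in fact slightly more careful about justifying the vanishing boundary term and the legitimacy of the change of variables than the paper's version.
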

\begin{proof}
Let $(f_i)$ be a solution to \eqref{eq:RealMA}. As in the proof of Lemma~\ref{lemma:ControlOfW}, by the Divergence Theorem, since $e^{-\sum f_i}\rightarrow 0$ exponentially as $|x|\rightarrow \infty$,
\begin{equation} 
\label{eq:Stokes}
\int_{\R^n} \left(\sum_i \nabla  f_i\right)e^{-\sum_i f_i}dx = \int_{\R^n} \div \nabla\left( e^{-\sum_i f_i}\right)dx = 0. 
\end{equation}
On the other hand, by \eqref{eq:RealMA}
$$ \eqref{eq:Stokes} = \sum_i \int_{\R^n} \nabla f_i e^{-\sum_i f_i}dx = \sum_i \int_{\R^n} \nabla f_i \frac{e^{\langle V_i,\nabla f_i\rangle }}{\Vol_{V_i}(P_i)}\det\left( \frac{\partial^2 f_i}{\partial x_m \partial x_l}\right)dx.$$
Performing the change of variables $\nabla f_i=p$ in each summand gives that the right hand side of this equals
$$ \sum_i \frac{1}{\Vol_{V_i}(P_i)}\int_{P_i} pe^{\langle V_i,p\rangle} dp = \sum_i \mathcal A_{P_i}(V_i). $$
This proves the lemma. 
\end{proof}
\begin{proof}[Proof of Corollary~\ref{cor:SpecialSoliton}.] 
Note that
\begin{equation}
    \label{eq:ASum}
    \sum_i \mathcal A_{P_i}(V)
\end{equation}
is the gradient of the function on $\R^n$ defined by
$$ V \mapsto \sum_i\log\int_{P_i} e^{\langle V,p\rangle}dp. $$
This is strictly convex and proper (in fact, its gradient image is $\sum_i P_i = P_{-K_X}$ which contain zero as an interior point), hence it admits a unique minimum. Letting $V$ be this minimum means \eqref{eq:SolitonCondition} is fulfilled. The corollary then follows from Theorem~\ref{thm:MainTheorem}.
\end{proof}
\begin{proof}[Proof of Corollary~\ref{cor:RealEquation}.]
The corollary follows from Theorem~\ref{thm:MainTheorem} and Lemma~\ref{lemma:RealMA}. 
\end{proof}

\subsection{Toric test configurations and proof of Theorem~\ref{thm:cYTD}}
\label{sec:TestConfigurations}
Theorem~\ref{thm:cYTD} will follow from Theorem~\ref{thm:MainTheorem} combined with Theorem~1.15 in \cite{HultgrenWittNystrom} and an explicit calculation of the Donaldson-Futaki invariant of test configurations induced by toric vector fields.

In \cite{HultgrenWittNystrom} a type of test configurations for decompositions of $c_1(X)$ was defined. The data defining them is essentially given by $k$ test configurations $(\mathcal X_1,\mathcal L_1), \ldots, (\mathcal X_k, \mathcal L_k)$ where $\mathcal X_1 = \ldots = \mathcal X_k =: \mathcal X$, such that $(\mathcal X,\sum_i\mathcal L_i)$ defines a test configuration for $(X,-K_X)$. The Donaldson-Futaki invariant associated to this data is defined as the intersection number
\begin{equation} 
    \label{eq:DF}
    DF(\mathcal X,(\mathcal L_i)) = -\sum_i \frac{\mathcal L^{n+1}}{|\alpha_i|} - (n+1)\frac{\left(-K_{\mathcal X/\mathbb P^1}-\sum_i \mathcal L_i\right)\cdot \left(\sum_i\mathcal L_i\right)^n}{(-K_X)^n} 
\end{equation}
where $|\alpha_i| = \int_X \theta^n$ for any $\theta$ such that $[\theta]=\alpha$. We point out that the notation here differs from \cite{HultgrenWittNystrom} in that here $(\mathcal X,\mathcal L_i)$ are the ($\C^*$-invariantly) compactified test configurations over $\mathbb P^1$. 

Now, recall that if $L$ is a toric line bundle over a toric manifold $X$, then a toric vector field $V$ induces a test configuration $(\mathcal X^V,\mathcal L^V)$ for $(X,L)$. This can be described in the following way: Let $d_1,\ldots,d_k\in N\otimes \R$ and $c_1,\ldots,c_k\in \R$ be the data defining the polytope $P_L$, i.e.
$$P_L = \{\langle d_i,\cdot \rangle \geq -c_i\}. $$
Then, the polytope of $\mathcal L^V$ can be arranged to be
$$P_{{\mathcal L^V}} = \{\langle d_i,\cdot\rangle \geq -c_i\} \cap \{\langle d_0+V,\cdot\rangle \geq 0 \} \cap \{\langle -d_0,\cdot\rangle \geq -C_{\mathcal L^V}\}. $$
where $d_0$ corresponds to the divisor given by the central fiber of $\mathcal X$ and $C_{\mathcal L^V}$ is a number that can be modified without changing the Donaldson-Futaki invariant by adding a multiple $\mathcal O_{\mathbb P^1}(1)$ to $\mathcal L^V$. 
In particular, as long as $C_{\mathcal L^V}$ is big enough for ${\mathcal L^V}$ to be ample, 
$$\left({\mathcal L^V}\right)^{n+1} = \Vol(P_{{\mathcal L^V}}) = \Vol(P_L)\left(C_{\mathcal L^V}+\left\langle V,b(P_L)\right\rangle\right). $$
This also gives
\begin{eqnarray}
(n+1)\mathcal O_{\mathbb P^1}(1)\cdot \left(\mathcal L^V\right)^n & = & \frac{d}{dt}\left(\mathcal L^V+tO_{\mathbb P^1}(1)\right)^{n+1} \nonumber \\
& = & \frac{d}{dt} \Vol\left(P_{\mathcal L^V+tO_{\mathbb P^1}(1)}\right) \nonumber \\
& = & \Vol(P_L). \end{eqnarray}
Finally, we note that if $L=-K_X$ then $\mathcal L^V$ is the relative canonical bundle of $\mathcal X^V$ up to a twist determined by $C_{\mathcal L^V}$.
\begin{equation}
    \label{eq:RelCan} 
    \mathcal L^V = -K_{\mathcal X^V/\mathbb P^1} +  C_{\mathcal L^V}\mathcal O_{\mathbb P^1}(1). 
\end{equation}

\begin{proof}[Proof of Theorem~\ref{thm:cYTD}]
Putting $V_1=\ldots=V_k=0$ gives
$$ \sum_i \mathcal A_{P_i}(V_i)=\sum_i b(P_i), $$
hence it follows from Theorem~\ref{thm:MainTheorem} that the third point of the theorem implies the first point. Moreover, the first point implies the second point by Theorem~1.15 in \cite{HultgrenWittNystrom}. Thus, to finish the proof of Theorem~\ref{thm:cYTD}, it suffices to prove that the second point implies the third point.  

We will prove the contrapositive. Assume $\sum_i b(P_i)\not= 0$, in other words $\sum_i \langle V,b(P_i)\rangle < 0$ for some toric vector field $V$. Let $(\mathcal X^V,(\mathcal L_i^V))$ be the associated test configuration. As $(\mathcal X^V,\sum_i \mathcal L_i^V)$ is a test configuration for $-K_X$ we get, using \eqref{eq:RelCan} and $|\alpha_i|=\Vol(P_i)$
\begin{eqnarray} 
DF\left(\mathcal X^V,\left(\mathcal L^V_i\right)\right) & = & \sum_i \frac{(\mathcal L_i^V)^{n+1}}{\Vol(P_i)} - (n+1)\frac{\left(\sum_i C_{\mathcal L_i^V}\right)\mathcal O_{\mathbb P^1}(1)\cdot \left(\sum_i\mathcal L_i^V\right)^n}{\Vol\left(P_{-K_X}\right)}. \nonumber \\
& = & \sum_i \left(C_{\mathcal L_i^V}+\left\langle V,b(P_L)\right\rangle\right) - \sum C_{\mathcal L_i^V} \nonumber \\
& = & \sum_i\left\langle V, b(P_i) \right\rangle < 0,
\end{eqnarray}
hence $(\alpha_i)$ is not K-polystable. 
\end{proof}

\subsection{Proof of Corollary~\ref{cor:example}}
\begin{proof}[Proof of Corollary~\ref{cor:example}]
First of all, by \cite{FMS} (see also \cite{Futaki} and \cite{Wang91}) the Futaki invariant of $X$ is non-zero, hence $X$ does not admit a Kähler-Einstein metrics. To prove the rest of the corollary, we fix a $(\mathbb C^*)^4$-action on $X$ in the following way: Consider the standard embeddings of $\mathcal O_{\mathbb P^2}(-1)$ and $\mathcal O_{\mathbb P^1}(-1)$ in to $\mathbb C^3\times \mathbb P^2$ and $\mathbb C^2\times \mathbb P^1$ respectively:
$$ \mathcal O_{\mathbb P^2}(-1) = \left\{ ((z_0,z_1,z_2),(a_0:a_1:a_2)) \, z_0a_1=z_1a_0, z_1a_2 = z_2a_1 \right\} $$
and 
$$ \mathcal O_{\mathbb P^1}(-1) = \left\{ ((w_0,w_1),(b_0:b_1)) \, w_0b_1=w_1b_0 \right\}. $$
We get an embedding of $X = \mathbb P(E)$ into $\mathbb P^4\times \mathbb P^2 \times \mathbb P^1$ as
\begin{eqnarray}
     X = \{ & ((z_0:z_1:z_2:w_0:w_1),(a_0:a_1:a_2),(b_0:b_1)): & \nonumber \\
    & z_0a_1=z_1a_0 & \nonumber \\
    & z_1a_2 = z_2a_1 & \nonumber \\
    & w_0b_1=w_1b_0 & \} \nonumber
\end{eqnarray}
We define a $(\mathbb C^*)^4$-action by letting an element $(t_1,t_2,t_3,t_4)\in (\mathbb C^*)^4$ act on $X$ by 
\begin{eqnarray}
& ((z_0:z_1:z_2:w_0:w_1),(a_0:a_1:a_2),(b_0:b_1)) & \nonumber \\
& \mapsto & \nonumber \\
& ((z_0:t_1z_1:t_2z_2:t_4w_0:t_4t_3w_1),(a_0:t_1a_1:t_2a_2),(b_0:t_3b_1)). & \nonumber
\end{eqnarray}
The invariant divisors are
\begin{eqnarray}
D_1 & = & \left\{ z_0 = a_0 = 0 \right\} \nonumber \\
D_2 & = & \left\{ z_1 = a_1 = 0 \right\} \nonumber \\
D_3 & = & \left\{ z_2 = a_2 = 0 \right\} \nonumber \\
D_4 & = & \left\{ w_0 = b_0 = 0 \right\} \nonumber \\
D_5 & = & \left\{ w_1 = b_1 = 0 \right\} \nonumber \\
D_6 & = & \left\{ z_0 = z_1 = z_2 = 0 \right\} \nonumber \\
D_7 & = & \left\{ w_0 = w_1 = 0 \right\} \nonumber
\end{eqnarray}
corresponding to the following elements in the lattice $N\cong\mathbb Z^4$ of one parameter subgroups of $(\mathbb C^*)^4$:
\begin{eqnarray}
d_1 & = & (-1,-1,0,-1) \nonumber \\
d_2 & = & (1,0,0,0) \nonumber \\
d_3 & = & (0,1,0,0) \nonumber \\
d_4 & = & (0,0,-1,1) \nonumber \\
d_5 & = & (0,0,1,0) \nonumber \\
d_6 & = & (0,0,0,-1) \nonumber \\
d_7 & = & (0,0,0,1). \nonumber
\end{eqnarray}
The divisor corresponding to $-K_X$ is $\sum_{i=1}^7 D_i$. For $c\in (1/4,3/4)$, we will be interested in divisors on the form
$$ D(c) = c(D_4+D_5) + \sum_{i\not=4,5} D_i/2. $$
corresponding to polytopes
\begin{equation} 
\label{eq:Polytopec}
P(c) = \{y\in \mathbb R^4: \langle y,d_i \rangle \leq 1\, i\not= 4,5, \, \langle y,d_i \rangle \leq c\ i=4,5   \}. 
\end{equation}
Note that the two classes in \eqref{eq:ExampleDecomposition} are given by $D(c)$ and $D(1-c)$, for 
\begin{equation} 
\label{eq:Correctc} 
c=\frac{1}{2} + \frac{\sqrt{\frac{5}{7}}}{4}\in \left(\frac{1}{4},\frac{3}{4}\right).
\end{equation}
To prove the proposition we will verify the follwoing two facts:
\begin{itemize}
    \item As long as $\in (\frac{1}{4},\frac{3}{4})$, none of the conditions in \eqref{eq:Polytopec} is redundant. (By standard theory for toric varieties this implies $D(c)$ and $D(-c)$ are ample and hence $\beta_1$ and $\beta_2$ are K\"ahler.) 
    \item The quantity $$\frac{\int_{P(c)} y dy}{\int_{P(c)} dy}+\frac{\int_{P(1-c)} y dy}{\int_{P(1-c)} dy} = 0.$$
    for $c$ as in \eqref{eq:Correctc}
\end{itemize}
Note that both these conditions are invariant under linear transformations of $R^n$. Applying the following linear transformation to the generators $d_1,\ldots,d_7$
$$ A = 
\begin{bmatrix} 
1 & 0 & 0 & -2 \\
0 & 1 & 0 & -2 \\
0 & 0 & 1 & 3 \\
0 & 0 & 0 & 6 
\end{bmatrix}
$$
gives new generators
\begin{eqnarray}
d_1' & = & (-1,-1,0,-2) \nonumber \\
d_2' & = & (1,0,0,-2) \nonumber \\
d_3' & = & (0,1,0,-2) \nonumber \\
d_4' & = & (0,0,-1,3) \nonumber \\
d_5' & = & (0,0,1,3) \nonumber \\
d_6' & = & (0,0,0,6) \nonumber \\
d_7' & = & (0,0,0,-6). \nonumber
\end{eqnarray}
And a new polytope 
\begin{equation}
\label{eq:P}
    P'(c) = \{y\in \mathbb R^4: \langle y,d'_i \rangle \leq 1\, i\not= 4,5, \, \langle y,d'_i \rangle \leq c\ i=4,5   \}
\end{equation} 
It is straight forward to check that as long as $c\in (1/4,3/4)$, non of the conditions in \eqref{eq:P} is redundant, hence $D(c)$ is ample for any $c\in (1/4,3/4)$. Moreover, the sets $\{d_1',d_2',d_3',d_6',d_7'\}$ and $\{d_4',d_5'\}$ are both invariant under the linear transformation 
$$ B = 
\begin{bmatrix} 
0 & -1 & 0 & 0 \\
1 & -1 & 0 & 0 \\
0 & 0 & -1 & 0 \\
0 & 0 & 0 & 1 
\end{bmatrix}.
$$
It follows that $P'(c)$ and hence the barycenter of $P'(c)$ is invariant under $B$. As any fixpoint of $B$ is paralell to $(0,0,0,1)$ we conclude that
$$ \int_{P'(c)} y_1 dy = \int_{P'(c)} y_2 dy = \int_{P'(c)} y_3 dy = 0.$$
Moreover, we denote by $S_2$ the two-dimensional simplex corresponding to the anti-canonical bundle of $\mathbb P^2$
$$ S_2 = \{y\in \mathbb R^2: -y_1\leq 1, -y_2\leq 1, y_1+y_2\leq 1\}$$
and note that $(y_1,\ldots,y_4)\in P'(c)$ if and only if $y_4\in (-1/12,1/12)$, $|y_3|\leq c-3y_4$ and $(y_1,y_2)\in (1/2+2y_4)S_2$. We get
\begin{eqnarray} \int_{P'(c)} y_4 dy & = & \int_{\frac{1}{12}[-1,1]} y_4 \left(\int_{(\frac{1}{2}+2y_4)S_2}dy_1dy_2\right)\left(\int_{(c-3y_4)[-1,1]}dy_3\right)dy_4 \nonumber \\
& = & 2\Vol(S_2)\int_{\frac{1}{12}[-1,1]} y_4\left(\frac{1}{2}+2y_4\right)^2(c-3y_4)dy_4  \nonumber \\
& = & \frac{5c-2}{720} \nonumber
\end{eqnarray}
and similarly
\begin{eqnarray} \int_{P'(c)} dy & = & 2\Vol(S_2)\int_{\frac{1}{12}[-1,1]} \left(\frac{1}{2}+2y_4\right)^2(c-3y_4)dy_4 \nonumber \\
& = & \frac{56c-3}{144}. \nonumber
\end{eqnarray}
It follows that
\begin{eqnarray}
\frac{\int_{P'(c)} y_4 dy}{\int_{P'(c)} dy} + \frac{\int_{P'(1-c)} y_4 dy}{\int_{P'(1-c)} dy}  & = & \frac{1}{5}\left(\frac{5c-2}{56c-3} + \frac{5(1-c)-2}{56(1-c)-3}\right) \nonumber \\
& = & \frac{(112c^2-112c+23)}{(56c-53)(56c-3)},
\end{eqnarray} 
which vanishes as 
$$c=\frac{1}{2}\pm \frac{\sqrt{\frac{5}{7}}}{4}\in \left(\frac{1}{4},\frac{3}{4}\right).$$
\end{proof}

\end{document}